\renewcommand{\part}[1] {\vspace{.10in} {\bf (#1)}}
\newtheorem{theorem}{Theorem}[section]
\newtheorem{definition}[theorem]{Definition}
\newtheorem{lemma}[theorem]{Lemma}
\newtheorem{proposition}[theorem]{Proposition}
\newtheorem{corollary}[theorem]{Corollary}
\newtheorem{remark}[theorem]{Remark}
\newtheorem{result}[theorem]{Result}
\numberwithin{equation}{section}
\newcommand{\vo}{\ensuremath{\mathbf{1}}}
\newcommand{\vz}{\ensuremath{\mathbf{0}}}
\newcommand{\vc}{\ensuremath{\mathbf{c}}}
\newcommand{\vu}{\ensuremath{\mathbf{u}}}
\newcommand{\vv}{\ensuremath{\mathbf{v}}}
\newcommand{\vs}{\ensuremath{\mathbf{s}}}
\newcommand{\vx}{\ensuremath{\mathbf{x}}}
\newcommand{\vy}{\ensuremath{\mathbf{y}}}
\newcommand{\bb}{\mathbb}
\newcommand{\Z}{\bb{Z}}
\newcommand{\R}{\bb{R}}
\newcommand{\C}{\bb{C}}
\newcommand{\cB}{\mathcal{B}}
\newcommand{\BSHM}{{\rm BSHM}}
\newcommand{\rank}{{\rm rank}}
\newcommand{\Row}{{\rm Row}}
\newcommand{\wh}{\widehat}
\newcommand{\la}{\lambda}
\long\def\symbolfootnote[#1]#2{\begingroup%
\def\thefootnote{\fnsymbol{footnote}}\footnote[#1]{#2}\endgroup}
\begin{document}

\title{Constructions and restrictions for balanced splittable 
Hadamard matrices}

\author{Jonathan Jedwab \and Shuxing Li \and Samuel Simon}

\date{7 July 2022 (revised 2 February 2023)}

\symbolfootnote[0]{
Department of Mathematics,
Simon Fraser University, 8888 University Drive, Burnaby BC V5A 1S6, Canada.
\par
J.~Jedwab is supported by an NSERC Discovery Grant.
S.~Li is supported by a PIMS Postdoctoral Fellowship.
\par
Email: {\tt jed@sfu.ca}, {\tt shuxing\_li@sfu.ca}, {\tt samuel\_simon@sfu.ca}
}

\begin{abstract}
A Hadamard matrix is balanced splittable if some subset of its rows has the property that the dot product of every two distinct columns takes at most two values. This definition was introduced by Kharaghani and Suda in 2019, although equivalent formulations have been previously studied using different terminology.
We collate previous results phrased in terms of balanced splittable Hadamard matrices, real flat equiangular tight frames, spherical two-distance sets, and two-distance tight frames.
We use combinatorial analysis to restrict the parameters of a balanced splittable Hadamard matrix to lie in one of several classes, and obtain strong new constraints on their mutual relationships. An important consideration in determining these classes is whether the strongly regular graph associated with the balanced splittable Hadamard matrix is primitive or imprimitive.
We construct new infinite families of balanced splittable Hadamard matrices in both the primitive and imprimitive cases.
A rich source of examples is provided by packings of partial difference sets in elementary abelian $2$-groups, from which we construct Hadamard matrices admitting a row decomposition so that the balanced splittable property holds simultaneously with respect to every union of the submatrices of the decomposition.

%\vspace{2ex}
%\noindent {{\bf Mathematics Subject Classification\/}: 05B20, 15B34, 42C15, 05B10.}
\end{abstract}

\maketitle

\begin{section}{Introduction}\label{Sec:intro}

Hadamard matrices are one of the central topics of combinatorial design theory, with connections to symmetric designs, orthogonal arrays, transversal designs, and regular two-graphs. Hadamard matrices have many practical applications in experimental design, spectroscopy, error correction, signal modulation and separation, signal correlation, and cryptography \cite{Horadam}.
We write $I_n$ for the $n \times n$ identity matrix.  

\begin{definition} \label{Defn:HM}
An $n \times n$ matrix $H$ over $\{1,-1\}$ is a \emph{Hadamard matrix of order $n$} if $H^T H=nI_n$.
\end{definition}

Suppose $H$ is a Hadamard matrix of order $n$. 
By definition, the columns of $H$ are pairwise orthogonal. Since $H$ is square and $\frac{1}{n}H^T$ is a left inverse for $H$, we have $\frac{1}{n}HH^T=I_n$ and so the rows of $H$ are also pairwise orthogonal. 
It is straightforward to show that $n=1,2$ or $n \equiv 0 \pmod{4}$. The Hadamard matrix conjecture, proposed by Paley \cite{Paley} in 1933, states that these necessary conditions are also sufficient. The conjecture is known to hold for $n < 668$ \cite{KTR}, but a proof remains elusive. 

Several authors 
impose additional structure on a Hadamard matrix in order to better understand the existence pattern and make connections with other combinatorial configurations \cite{Handbook,Will}.
The following additional structure was proposed in 2019 by Kharaghani and Suda \cite{KS}, who established connections with strongly regular graphs, equiangular lines, mutually unbiased Hadamard matrices, and commutative association schemes \cite[Sections 2 and 5]{KS}. 

\begin{definition}
\label{Defn:bshm}
Let $H$ be a Hadamard matrix of order $n$, let $H_1$ be an $\ell \times n$ submatrix of~$H$ where $1 \le \ell \le n-1$, 
and let $a, b$ be integers. Then $H$ is a \emph{balanced splittable Hadamard matrix} with respect to $H_1$ if the dot product of every two distinct columns of $H_1$ lies in~$\{a,b\}$. In this case, we say that $H$ is a $\BSHM(n,\ell,a,b)$ with respect to~$H_1$.
\end{definition}

We prefer ``balanced splittable'' to the expression ``balancedly splittable'' used in \cite{KS}.

The central question in the study of balanced splittable Hadamard matrices is to determine
for which parameter sets $(n,\ell,a,b)$ there exists a $\BSHM(n,\ell,a,b)$ with respect to some $\ell \times n$ submatrix.
A secondary question (not considered in this paper) is to determine whether a specified order $n$ Hadamard matrix is a $\BSHM(n,\ell,a,b)$ with respect to some $\ell \times n$ submatrix and for some $a,b$.

The remainder of this paper is organized as follows.
In \cref{Sec:prev}, we review some elementary results on balanced splittable Hadamard matrices. We then summarize the results given by Kharaghani and Suda~\cite{KS}, and review further results originally expressed in the language of real flat equiangular tight frames and spherical two-distance sets.
In \cref{Sec:constraints}, we classify the parameters $(n,\ell,a,b)$ of a nontrivial $\BSHM(n,\ell,a,b)$: apart from those with $\ell \in \{2, n-2\}$ (characterized in \cref{Lemma:A1A2}~$(ii)$ and \cref{Cor:ell2}), there are five classes as summarized in \cref{Tab:summary} below.
In \cref{Sec:primitive-open}, we tabulate the parameter sets for the smallest open primitive cases.
In \cref{Sec:primitive}, we construct new infinite families of primitive balanced splittable Hadamard matrices via the character table of partial difference sets in elementary abelian $2$-groups. In particular, we determine for each parameter set with $n \in \{64,256\}$ whether a primitive $\BSHM(n,\ell,a,b)$ exists.
We then use packings of partial difference sets to produce infinite families of Hadamard matrices that have the balanced splittable property with respect to multiple disjoint submatrices simultaneously.
In \cref{Sec:imprimitive}, we construct new infinite families of imprimitive balanced splittable Hadamard matrices by means of a Kronecker product construction, and further constrain the possible parameter sets.
In \cref{Sec:future}, we propose some open questions for future research. 

\begin{table}[ht!]
\caption{Five classes for a $\BSHM(n,\ell,a,b)$ satisfying $2 < \ell < n-2$, and its associated 
strongly regular graph $G$ with parameters $(v,k,\lambda,\mu)$, up to application of 
the switching transformation \eqref{Eq:switching} and interchange of $a,b$ and (for $b = -a$) negation of columns.
(Types 1 and 2 are defined in \cref{Prop:SRG2}, the associated graph is defined in \cref{Def:assgraph}, and primitive and imprimitive are defined in \cref{Def:primBSHM}.)}
\begin{center}
\scriptsize
\begin{tabular}{|c||c|c|c|c|c|}
																									   \hline
  		& $b=-a$				& \multicolumn{4}{c|}{$b\ne -a$} 														\\ \hline
  		& 					& \multicolumn{2}{c|}{Type 1}						& \multicolumn{2}{c|}{Type 2}						\\ \hline
  		& $n > 2\ell$				& \multicolumn{2}{c|}{$n > 2\ell$} 					& \multicolumn{2}{c|}{$n \ge 2\ell$}					\\ \hline
    		& primitive				& imprimitive			& primitive				& imprimitive		& primitive 					\\ \hline
parameter	& $n=\frac{\ell^2-a^2}{\ell-a^2}$,	& $(n,\ell,a,b)=$  		& $n=\frac{(\ell-a)(\ell-b)}{\ell+ab}$,	& $(n,\ell,a,b)=$ 	& $n=\frac{(\ell-a)(\ell-b)}{\ell+ab-a-b}$,	\\ 
relations	& $\ell \equiv a \pmod{4}$, 		& $(4rs,4s-1,4s-1,-1)$  	& $\ell \equiv a \equiv b \pmod{4}$,	& $(8rs,4s,4s,0)$ 	& $\ell \equiv a \equiv b \pmod{4}$,		\\ 
		& $a > 0$ (even)			& for $r \ge 2$, $s \ge 1$  	& $a > 0 \ge b$				& for $r,s \ge 1$	& $a > 0 \ge b$					\\ \hline 
\multirow{6}{*}{$G$}
   		& $v=n$,				& \multirow{6}{*}{$4sK_r$}	& $v=n$,				& \multirow{6}{*}{$4sK_{2r}$}	
																			& $v=n$,					\\ 
		& $k=\frac{(n-1)a-\ell}{2a}+c$,		&				& $k=\frac{\ell-b+nb}{b-a}$,		& 			& $k=\frac{\ell-b+n(b-1)}{b-a}$,		\\
		& $\lambda=\frac{(n-4)a+n-4\ell}{4a}+c$,&				& $\lambda=\mu+\frac{2(\ell-b)-n}{b-a}$,& 			& $\lambda=\mu+\frac{2(\ell-b)-n}{b-a}$,	\\
		& $\mu=\frac{n(a-1)}{4a}+c$		&				& $\mu=\frac{nb(b+1)}{(b-a)^2}$		& 			& $\mu=\frac{nb(b-1)}{(b-a)^2}$			\\ 
		& for $c=0$ and $c=\frac{n}{2a}$	&				&					&			&						\\ [1ex] \hline
\multirow{2}{*}{integers} 		
		& \multirow{2}{*}{$\frac{\ell}{a}$ (odd), $\frac{n}{4a}$}
							&				& $\frac{\ell-b}{b-a}$,\, $\frac{n}{b-a}$,	
																& 			& $\frac{\ell-b}{b-a}$,\, $\frac{n}{b-a}$, 	\\ [1ex]
		& 					&				& $\frac{n(b+1)}{2(b-a)}$,\, $\frac{nb(b+1)}{(b-a)^2}$ 
																& 			& $\frac{n(b-1)}{2(b-a)}$,\, $\frac{nb(b-1)}{(b-a)^2}$	
																									\\ \hline
\end{tabular}
\end{center}
\normalsize
\label{Tab:summary}
\end{table}

\end{section}

\begin{section}{Previous results}\label{Sec:prev}

In this section, we summarize the previous state of knowledge for the existence of a $\BSHM(n,\ell,a,b)$.
\cref{Subsec:Elementary} contains some elementary results.
\cref{Subsec:KS} reviews the results presented by Kharaghani and Suda~\cite{KS}.
\cref{Subsec:ETF} contains results originally phrased in terms of real flat equiangular tight frames, and
\cref{Subsec:STS} contains results originally phrased in terms of spherical two-distance sets and two-distance tight frames.

We write $J_n$ for the $n \times n$ all-ones matrix, $0_{n \times m}$ for the $n \times m$ all-zeroes matrix, and $\vo$ for the all-ones column vector and $\vz$ for the all-zeroes column vector (with length determined by context).

\begin{subsection}{Elementary results}\label{Subsec:Elementary}

\begin{remark}
\label{Rem:A}
Suppose that $H$ is a $\BSHM(n,\ell,a,b)$ with respect to $H_1$.
The $(i,j)$ entry of $H_1^T H_1$ is the dot product of columns $i$ and $j$ of~$H_1$, so
\begin{equation}\label{Eq:Def}
    H_1^TH_1 = \ell I_n +aA+b(J_n-I_n-A)
\end{equation}
where $A = (A_{i,j})$ is the $n \times n$ symmetric matrix over $\{0,1\}$ with zero diagonal given by
\begin{equation}\label{Eq:Aij}
A_{i,j} =
\begin{cases}
1 & \mbox{if $i\ne j$ and the dot product of columns $i$ and $j$ of $H_1$ equals $a$,} \\
0 & \mbox{otherwise}.
\end{cases}
\end{equation}
By orthogonality of the rows of $H$, 
\begin{equation}\label{Eq:MMT}
H_1H_1^T=nI_\ell \quad \mbox{and} \quad H_2H_2^T=nI_{n-\ell} \quad \mbox{and} \quad H_1 H_2^T = 0_{\ell \times (n-\ell)}.
\end{equation}
\end{remark}

\begin{lemma}\label{Lemma:A1A2}
\mbox{}
\begin{enumerate}[$(i)$]
\item
A $\BSHM(n,\ell,a,b)$ with respect to a submatrix $H_1$ is also a $\BSHM(n,\ell,b,a)$ with respect to~$H_1$.

\item
The matrix $H = \begin{pmatrix}
    H_1 \\
    H_2
    \end{pmatrix}$ is a $\BSHM(n,\ell,a,b)$ with respect to $H_1$ if and only if $H$ is a  $\BSHM(n,n-\ell,-a,-b)$ with respect to $H_2$.

\item
The parameters of a $\BSHM(n,\ell,a,b)$ satisfy $|a|, |b| \le \min \{\ell,n-\ell\}$.
\end{enumerate}
\end{lemma}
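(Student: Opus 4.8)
The plan is to handle the three parts in order, using the fact that the defining condition depends only on the unordered pair of values $\{a,b\}$ and that column-orthogonality of $H$ ties together the column dot products of $H_1$ and $H_2$.

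For part $(i)$, I would simply observe that \cref{Defn:bshm} requires each pairwise column dot product of $H_1$ to lie in the \emph{set} $\{a,b\}$, and that $\{a,b\} = \{b,a\}$. Thus the $\BSHM$ property is symmetric in its last two parameters, and no computation is needed.

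For part $(ii)$, the key identity is the block decomposition of the column-orthogonality relation. Writing $H = \begin{pmatrix} H_1 \\ H_2 \end{pmatrix}$, I would compute $H^T H = H_1^T H_1 + H_2^T H_2$ and combine this with the defining property $H^T H = n I_n$ to get $H_1^T H_1 + H_2^T H_2 = n I_n$. Reading off the $(i,j)$ entry for $i \neq j$ then shows that the dot product of columns $i$ and $j$ of $H_2$ is exactly the negative of the corresponding dot product in $H_1$. Since negation is a bijection on the integers, the set of all pairwise column dot products of $H_1$ is contained in $\{a,b\}$ if and only if the set for $H_2$ is contained in $\{-a,-b\}$; as $H_2$ is an $(n-\ell)\times n$ submatrix, this is precisely the asserted equivalence.

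For part $(iii)$, I would bound a single dot product directly: each column of $H_1$ is a $\{1,-1\}$-vector of length $\ell$, so the dot product of two such columns is a sum of $\ell$ terms each equal to $\pm 1$, hence lies in the interval $[-\ell,\ell]$. This forces $|a|,|b| \le \ell$. Applying the identical bound to $H_2$, whose columns have length $n-\ell$, and invoking part $(ii)$ to identify the relevant dot-product values as $-a$ and $-b$, I obtain $|a|,|b| \le n-\ell$; taking the minimum gives the claim. I do not expect a genuine obstacle anywhere in this lemma, since all three statements are elementary; the only point meriting care is routing the $n-\ell$ bound in part $(iii)$ cleanly through part $(ii)$ rather than re-deriving it for $H_2$ from first principles.
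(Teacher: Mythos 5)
Your proposal is correct and follows essentially the same route as the paper's (very terse) proof: parts $(i)$ and the bound $|a|,|b|\le\ell$ come straight from \cref{Defn:bshm}, part $(ii)$ comes from the column orthogonality $H_1^TH_1 + H_2^TH_2 = H^TH = nI_n$, and part $(iii)$ is obtained by combining the $\ell$-bound with part $(ii)$. You have merely made explicit the orthogonality computation that the paper leaves implicit in the phrase ``follow directly from the definition.''
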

\begin{proof}
$(i)$ and $(ii)$ and the relation $-\ell \le a, b \le \ell$ follow directly from \cref{Defn:bshm}. Then $(iii)$ follows from~$(ii)$.
\end{proof} 

\begin{remark}\label{Rem:basic}
\mbox{}
\begin{enumerate}[$(i)$]
\item
By \cref{Lemma:A1A2}~$(i)$, we may interchange the parameters $a$ and $b$ of a $\BSHM(n,\ell,a,b)$; we shall usually follow the convention of \cite{KS} by taking $a \ge b$.
By \cref{Lemma:A1A2}~$(ii)$, we may also apply the ``switching transformation''
\begin{equation}\label{Eq:switching}
H_1 \leftrightarrow H_2 \quad \mbox{and} \quad 
(\ell,a,b) \leftrightarrow (n-\ell,-a,-b),
\end{equation}
and we shall usually do so in order that $\ell \le n/2$ holds.

\item
We can easily characterize the case $\ell = 1$:
a Hadamard matrix $\begin{pmatrix} H_1 \\ H_2 \end{pmatrix}$ 
of order~$n>2$ is a $\BSHM(n,1,a,b)$ with respect to $H_1$ if and only if either
$(a,b)=(1,1)$ and $H_1 = \pm \vo^T$, or else
$\{a,b\} = \{1,-1\}$ and $H_1 \ne \pm \vo^T$.
We therefore regard the case $\ell = 1$ (and by $(i)$ the case $\ell = n-1$) as \emph{trivial}. 
Throughout the rest of the paper we shall consider a $\BSHM(n,\ell,a,b)$ only in the nontrivial cases $1 < \ell < n-1$ (and then $n \equiv 0 \pmod{4}$).

\item
Two Hadamard matrices are \emph{equivalent} if one can be transformed into the other by permutation and negation of some rows and columns. The balanced splittable property of a Hadamard matrix is preserved under row and column permutation, and under row negation, but not necessarily under column negation (except in the case $b=-a$).
\end{enumerate}
\end{remark}

\end{subsection}

\begin{subsection}{Results from Kharaghani and Suda \cite{KS}}\label{Subsec:KS}

We firstly exclude the case $b=a$.

\begin{result}[{\cite[Proposition~2.4]{KS}}]\label{Res:KS}
Suppose that $H$ is a nontrivial $\BSHM(n,\ell,a,b)$. Then $b \ne a$.
\end{result}

We next distinguish the cases $b=-a$ and $b \ne -a$.
The following result, for the case $b=-a$, follows from \cite[Propositions~2.13, 2.14, 2.16, 2.17]{KS}.
A Hadamard matrix $H$ of order~$n$ is \emph{regular} if $\vo^TH=c\sqrt{n}\vo^T$  for $c \in \{1,-1\}$.
Two Hadamard matrices $H$ and $L$ of order $n$ are \emph{unbiased} if 
each entry of $HL^T$ lies in $\big\{\sqrt{n}, -\sqrt{n}\big\}$.

\begin{result}\label{Res:b=-a}
Suppose $H = \begin{pmatrix} H_1 \\ H_2 \end{pmatrix}$ is a nontrivial
$\BSHM(n,\ell,a,-a)$ with respect to $H_1$. Then:
\begin{enumerate}[$(i)$]
\item $a$ is even and $\ell \equiv a \pmod{4}$

\item If $(n,\ell)=(4a^2, 2a^2 \pm a)$, then $H$ is equivalent to a regular Hadamard matrix and $L:=\frac{1}{2a}(H_1^TH_1-H_2^TH_2)$
is a Hadamard matrix such that $L$ and $H$ are unbaised. Conversely, if $L$ is a Hadamard matrix, then $(n,\ell)=(4a^2, 2a^2 \pm a)$.

\end{enumerate}

\end{result}

The following constructions for $b \ne -a$ are given by \cite[Theorems~3.1, 3.2, 3.3, 3.4, 3.7, 3.8]{KS} and \cref{Lemma:A1A2}~$(ii)$.
Those described in \cref{Res:Constructions}~$(i)$--$(iv)$ involve the Kronecker product of Hadamard matrices, 
whereas those described in \cref{Res:Constructions}~$(v)$,$(vi)$ are direct constructions.
A Hadamard matrix of the form $H = C + I_{q+1}$, where $C^T = -C$, is called \emph{skew-type}. 

\begin{result}\label{Res:Constructions}
Suppose there exist Hadamard matrices of orders $n$ and~$s$. Then there exists:
\begin{enumerate}[$(i)$]
    \item a $\BSHM(n^2,2n-2,n-2,-2)$ for $n \ge 2$
    \item a $\BSHM(n^2,2n-1,n-1,-1)$ 
	  for $n \ge 4$
    \item a $\BSHM(ns,n,n,0)$
 	  for $n \ge 2$
    \item a $\BSHM(2^{2m},2^{m-1}(2^m - 1),2^{m-1},-2^{m-1})$ for $m \ge 2$
    \item a $\BSHM(n,2,2,0)$ 
	  for $n \ge 4$
    \item a $\BSHM(q(q+1),q,q,-1)$ for $q \ge 3$, where $q+1$ is the order of a skew-type Hadamard matrix.
\end{enumerate}
\end{result}

A Hadamard matrix can have the balanced splittable property with respect to multiple disjoint submatrices simultaneously. 
For an $m \times n$ matrix $A = (a_{ij})$ and a matrix $B$, write $A \otimes B$ for the Kronecker product 
\[
\begin{pmatrix} 
a_{11}B & \dots  & a_{1n}B  	\\
\vdots	& \ddots & \vdots	\\
a_{m1}B & \dots  & a_{mn}B
\end{pmatrix}.
\]
Let $S_1 = \begin{pmatrix} 1 & 1 \\ 1 & -1 \end{pmatrix}$, and define $S_r = S_1 \otimes S_{r-1}$ recursively for $r \ge 2$. Then $S_r$ is the \emph{Sylvester-type} Hadamard matrix of order~$2^r$. The following refinement of \cref{Res:Constructions}~$(iv)$ is given by \cite[Theorem 3.7]{KS}; we shall greatly extend this construction in \cref{Cor:PDSpacking}.

\begin{result}\label{Res:twin}
The Sylvester-type Hadamard matrix $H$ of order $2^{2m}$ can be partitioned into submatrices $H_1, H_2, H_3$ of size 
$2^m \times 2^{2m}$,\, $2^{m-1}(2^m-1) \times 2^{2m}$,\, $2^{m-1}(2^m-1) \times 2^{2m}$, respectively,
such that $H$ is simultaneously 
a $\BSHM(2^{2m},2^m,2^m,0)$ with respect to $H_1$, and is
a $\BSHM(2^{2m}, 2^{m-1}(2^m-1), 2^{m-1}, -2^{m-1})$ with respect to $H_2$ and with respect to~$H_3$.
\end{result}

The following result on the existence of a $\BSHM(16,6,2,-2)$ is given by \cite[Remark 2.15, Examples 2.18, 2.19, 2.20]{KS}.

\begin{result}\label{Res:16}
Exactly three of the five inequivalent Hadamard matrices of order $16$ are equivalent to a $\BSHM(16,6,2,-2)$.
\end{result}

The following nonexistence results are derived in \cite[Proposition~2.21]{KS} using careful analysis and computer search. We shall recover and extend these results theoretically in \cref{Cor:mod4}.

\begin{result}\label{Res:36}
There is no $\BSHM(36,\ell,a,b)$ for the following parameters:
\begin{enumerate}[$(i)$]
    \item $(\ell,a,b) = (10,4,-2)$.
    \item $(\ell,a,b) = (25,1,-5)$.
    \item $(\ell,a,b) = (14,2,-4)$.
    \item $(\ell,a,b) = (20,2,-4)$.
\end{enumerate}
\end{result}

\end{subsection}

\begin{subsection}{Real flat equiangular tight frames}\label{Subsec:ETF}

Equiangular tight frames are widely studied in communications, coding theory, and sparse approximation \cite{DGS91,FMT,N,SH,STD+,W}. 

\begin{definition}\label{Def:ETF}
Let $S$ be an $\ell \times n$ matrix with complex entries and $S^*$ be the conjugate transpose of~$S$. 
The matrix $S$ is a \emph{tight frame} if $SS^*=nI_{\ell}$. 
A tight frame $S$ is \emph{real} if its entries are all real, and \emph{flat} if its entries all have magnitude~$1$ (and so is real flat if its entries all lie in $\{1,-1\}$).
A tight frame $S$ is an \emph{equiangular tight frame} (ETF) if all the off-diagonal entries of the Gram matrix $S^*S$ have constant magnitude. 
An $\ell \times n$ ETF is a \emph{Hadamard ETF} if it is a submatrix of an order $n$ Hadamard matrix.
\end{definition}

If $S$ is an $\ell \times n$ real flat ETF with columns $\vs_1,\dots,\vs_n$, then by considering the eigenvalues of the Gram matrix $S^*S$
one can derive that the dot product of $\vs_i$ and $\vs_j$ is
$\pm\sqrt{\frac{\ell(n-\ell)}{n-1}}$ for all distinct~$i,j$ \cite[Theorem 2.3]{SH}, \cite[Proposition~2]{STD+}.
Therefore a Hadamard matrix $H = \begin{pmatrix}H_1 \\ H_2 \end{pmatrix}$ is a $\BSHM(n,\ell,a,-a)$ with respect to $H_1$ if and only if $H_1$ is an $\ell \times n$ Hadamard ETF 
(and then $a = \pm\sqrt{\frac{\ell(n-\ell)}{n-1}}$), as noted in \cite[Proposition 2.12]{KS}.  
This implies the following necessary conditions for a $\BSHM(n,\ell,a,-a)$, as consequences of \cite[Theorems A, C and Corollary 14]{STD+}.

\begin{result}\label{Res:ETF}
Suppose there exists a nontrivial $\BSHM(n,\ell,a,-a)$. Then:
\begin{enumerate}[(i)]
\item 
$\ell \ne \frac{n}{2}$

\item 
$\sqrt{\frac{\ell(n-1)}{n-\ell}}$ and $\sqrt{\frac{(n-\ell)(n-1)}{\ell}}$ are odd integers 

\item 
$(n-2\ell)\sqrt{\frac{n-1}{\ell(n-\ell)}}$ is an integer

\item 
$n \le \min\{ \frac{\ell(\ell+1)}{2}, \frac{(n-\ell)(n-\ell+1)}{2} \}$.
\end{enumerate}
\end{result} 

We rephrase the elegant construction of a Hadamard ETF in \cite[Theorem 2]{FJMP} as the following result, which signficantly extends \cref{Res:Constructions}~$(iv)$.

\begin{result}\label{Res:Equi}
Suppose there exists a Hadamard matrix of order~$n$. Then there exists a $\BSHM(4n^2,2n^2-n,n,-n)$.
\end{result}

We next describe a characterization of a real flat ETF involving quasi-symmetric balanced incomplete block designs, and some resulting restrictions on the parameters of a $\BSHM(n,\ell,a,-a)$.

\begin{definition}
A $(v,k,\la,r,b)$ \emph{balanced incomplete block design} (BIBD) is a pair $(V,\cB)$, where $V$ is a set of $v$ points and $\cB$
is a collection of $k$-subsets (blocks) of $V$ with $|\cB|=b$, such that each element of $V$ is contained in exactly $r$ blocks and each $2$-subset of $V$ is contained in exactly $\la$ blocks. A $(v,k,\la,r,b,x,y)$ quasi-symmetric BIBD is a $(v,k,\la,r,b)$ BIBD for which every two distinct blocks intersect in either $x$ or $y$ points, where $0 \le x < y$.
\end{definition}

\begin{result}{\rm{\cite[Theorem 3]{FJMP}}}\label{Res:ChaEqui}
An $\ell \times n$ real flat ETF exists if and only if a $(v,k,\la,r,b,x,y)$ quasi-symmetric BIBD exists with parameters
\begin{align*}
a=\sqrt{\frac{\ell(n-\ell)}{n-1}}, \quad 
v=\ell, \quad
k=\frac{\ell-a}{2}, \quad \la=\frac{(n-1)(\ell-a)(\ell-a-2)}{4\ell(\ell-1)}, \\ r=\frac{(n-1)(\ell-a)}{2\ell}, \quad b=n-1, \quad x=\frac{\ell-3a}{4}, \quad y=\frac{\ell-a}{4}.
\end{align*}
In particular, if a $\BSHM(n,\ell,a,-a)$ exists then each of these expressions is a nonnegative integer.
\end{result}

\end{subsection}

\begin{subsection}{Spherical two-distance sets and two-distance tight frames}\label{Subsec:STS}

Many authors have studied finite sets of points over the unit sphere of~$\R^n$ having a small number of distinct distances among the points \cite{BBS,DGS91,GY};
when this number is two, the sets are called spherical two-distance sets \cite{BGOY,BY,B,CTT,DGS77,LRS,L,M,N}.

\begin{definition}\label{Defn:two-distance}
Let $S=\{ \vs_1, \dots, \vs_n \}$ be a subset of $\R^{\ell}$, where $||\vs_i||$ is constant over~$i$. The set $S$ is a \emph{spherical two-distance set} with values $a,b$
if $\vs_i \cdot \vs_j$ takes only the values $a$ and $b$ over all distinct $i,j$. A spherical two-distance set $S$ with values $a,b$ is \emph{regular} if the number of $j$, other than $i$, for which $\vs_i \cdot \vs_j = a$ holds is constant over~$i$.
\end{definition}

It follows from \cref{Defn:two-distance} that if
$H = \begin{pmatrix}H_1 \\ H_2 \end{pmatrix}$ is a $\BSHM(n,\ell,a,b)$ with respect to $H_1$, then
the columns of $H_1$ form a spherical two-distance set over $\{1,-1\}^{\ell}$ with values~$a,b$. 
We shall see in \cref{Prop:SRG1} that a $\BSHM(n,\ell,a,-a)$ can be transformed, by negating columns as necessary, so that its corresponding spherical two-distance set is regular; and in \cref{Prop:SRG2} that the spherical two-distance set corresponding to a $\BSHM(n,\ell,a,b)$ with $b \ne -a$ is necessarily regular.
We thereby obtain the following constraints from the cited results on spherical two-distance sets. 

\begin{result}\label{Res:NonEqui}
\mbox{}
\begin{enumerate}[$(i)$]
\item A $\BSHM(n,\ell,a,b)$ with $n > 2\ell+1 \ge 5$ and $\ell>a>b$ satisfies $\frac{\ell-a}{\ell-b}=\frac{m-1}{m}$ for some integer $m \ge 2$. In particular, when $b=-a$ we have $\frac{\ell}{a}=2m-1$ \emph{(\cite[Theorem 2]{LRS}, \cite[Theorem 2]{N})}.

\item A $\BSHM(n,\ell,a,b)$ with $\ell>\max\{a,b\}$ satisfies
\begin{equation*}
n \le \begin{cases}
        5 & \mbox{if $\ell=2$,} \\
        6 & \mbox{if $\ell=3$,} \\
        10 & \mbox{if $\ell=4$,} \\
        16 & \mbox{if $\ell=5$,} \\
        27 & \mbox{if $\ell=6$,} \\
        \frac{\ell(\ell+1)}{2} & \mbox{if $\ell \ge 7$ and $\ell \ne (2k+1)^2-3$ for each $k \ge 1$,} \\
        \frac{\ell(\ell+3)}{2} & \mbox{if $\ell \ge 7$ and $\ell = (2k+1)^2-3$ for some $k \ge 1$}
      \end{cases}    
\end{equation*}
\emph{(\cite[Theorem 1.1]{BY},\cite[Theorem 1]{GY})}.

\item A $\BSHM(n,\ell,a,b)$ with $\ell>\max\{a,b\}$ satisfies $ab \le 0$ \emph{(\cite[Theorem 2.14]{CTT})}.
\end{enumerate}
\end{result}

A \emph{two-distance tight frame} is a real flat tight frame whose columns form a spherical two-distance set.
A two-distance tight frame with values $a,b$ where $b=-a$ is a real flat ETF, as characterized in \cref{Res:ChaEqui}.
A two-distance tight frame with values $a,b$ where $b \ne -a$ will be characterized in \cref{Thm:ChaNonEqui}, subject to an additional condition, using the following definition.
(\cref{Thm:ChaNonEqui} is not a previous result, but is included as a counterpart to \cref{Res:ChaEqui}.)

\begin{definition}
A $(v,\{k_1,k_2\},\la)$ pairwise balanced design is a pair $(V,\cB)$ where $V$ is a set of $v$ points and $\cB$
is a collection of subsets (blocks) each of size $k_1$ or $k_2$, such that every $2$-subset of $V$ is contained in exactly $\la$ blocks. 
The \emph{incidence matrix} of a $(v,\{k_1,k_2\},\la)$ pairwise balanced design is the $v \times |\cB|$ matrix whose $(i,j)$ entry is $1$ if point $i$ is contained in block $j$, and is $0$ otherwise. 
\end{definition}

The following characterization of a two-distance tight frame $S$ assumes that the first column of $S$ is $\vo$ and that $S \vo = \vz$. 
The first assumption is readily satisfied, by negating rows of $S$ as necessary.
We shall show in \cref{Prop:SRG2} that for $b \ne -a$, a nontrivial $\BSHM(n,\ell,a,b)$ with respect to a submatrix $H_1$ can be transformed,
using the switching transformation \eqref{Eq:switching} if necessary, so that $H_1 \vo = \vz$;
we may then apply \cref{Thm:ChaNonEqui} with $S=H_1$.

\begin{theorem}\label{Thm:ChaNonEqui}
Let $S$ be an $\ell \times n$ matrix having first column $\vo$ and satisfying $S \vo = \vz$, and write
$S = \begin{pmatrix} \vo & J-2X \end{pmatrix}$ where $J$ is the $\ell \times (n-1)$ all-ones matrix. 
Then $S$ is a two-distance tight frame with values $a,b$ if and only if
$X$ is the incidence matrix of an $(\ell,\{\frac{\ell-a}{2}, \frac{\ell-b}{2}\},\frac{n}{4})$ pairwise balanced design with $n-1$ blocks for which 
every two distinct blocks have intersection size lying in $\{ \frac{\ell-a}{4}, \frac{\ell-b}{4}, \frac{\ell+a-2b}{4}, \frac{\ell+b-2a}{4} \}$. 
\end{theorem}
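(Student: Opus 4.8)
The plan is to translate each defining property of a two-distance tight frame into a combinatorial statement about $X$, reading off a dictionary column by column. Writing $\vx_1,\dots,\vx_{n-1}$ for the columns of $X$, the $(j{+}1)$-st column of $S$ is $\vo-2\vx_j$, so $S$ has entries in $\{1,-1\}$ exactly when $X$ has entries in $\{0,1\}$; thus \emph{flatness of $S$ is equivalent to $X$ being a genuine $\{0,1\}$ incidence matrix} of a set system on $\ell$ points (the rows) with $n-1$ blocks (the columns). Every column of such an $S$ has norm $\sqrt{\ell}$, so the constant-norm (spherical) requirement is automatic, and the first column $\vo$ acts as a marker for the point set. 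I would fix the notation $k_j=\vo^T\vx_j$ for the size of block $j$, $r_p$ for the number of blocks through point $p$, and $m_{ij}=\vx_i^T\vx_j$ for the size of the intersection of blocks $i$ and $j$.

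Next I would handle the tight-frame condition $SS^T=nI_\ell$. Writing $\vr=X\vo$ for the vector of replication numbers $r_p$, expanding $SS^T=\vo\vo^T+(J-2X)(J-2X)^T$ and using $JJ^T=(n-1)\vo\vo^T$ and $JX^T=\vo\vr^T$ gives
\[
SS^T=n\,\vo\vo^T-2\vo\vr^T-2\vr\vo^T+4XX^T,
\]
where $(XX^T)_{pp}=r_p$ and, for $p\ne q$, $(XX^T)_{pq}$ counts the blocks containing both points $p$ and $q$. The diagonal entries equal $n-4r_p+4r_p=n$ identically, so the diagonal imposes no condition. The hypothesis $S\vo=\vz$ says every row of $S$ sums to zero, i.e. $n-2r_p=0$, so $r_p=n/2$ for all $p$; substituting, the off-diagonal $(p,q)$ entry becomes $4(XX^T)_{pq}-n$. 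Hence, given $S\vo=\vz$, the tight-frame property is equivalent to every pair of points lying in exactly $n/4$ blocks, that is, to $X$ being the incidence matrix of a pairwise balanced design with index $n/4$.

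It remains to convert the spherical two-distance condition into block-size and intersection-size data. The dot product of the first column with column $j{+}1$ is $\vo^T(\vo-2\vx_j)=\ell-2k_j$, so demanding this lie in $\{a,b\}$ is equivalent to $k_j\in\{\tfrac{\ell-a}{2},\tfrac{\ell-b}{2}\}$, giving the two block sizes. For two later columns, $(\vo-2\vx_i)^T(\vo-2\vx_j)=\ell-2k_i-2k_j+4m_{ij}$; running through the three possibilities $(k_i,k_j)\in\{(k_1,k_1),(k_1,k_2),(k_2,k_2)\}$ and setting this equal to $a$ or to $b$ produces exactly the four candidate values $\tfrac{\ell-a}{4},\tfrac{\ell-b}{4},\tfrac{\ell+a-2b}{4},\tfrac{\ell+b-2a}{4}$ for $m_{ij}$. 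Assembling the three parts yields the biconditional, with the $n-1$ blocks coming from the $n-1$ non-initial columns of $S$.

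The step I expect to be delicate is the intersection-size correspondence in the reverse direction, because the dot product $\ell-2k_i-2k_j+4m_{ij}$ depends on the \emph{pair} of block sizes, not on the intersection alone. Each of the four listed values yields a dot product in $\{a,b\}$ only for the appropriate size-pairing (for instance, two blocks of size $\tfrac{\ell-a}{2}$ meeting in $\tfrac{\ell-a}{4}$ or $\tfrac{\ell+b-2a}{4}$ points give dot products $a$ and $b$ respectively), so I would present the correspondence as an explicit table over the three size-pairings, verifying in each direction that ``dot product in $\{a,b\}$'' matches ``intersection among the four values.'' The four-element set is precisely the union of the admissible intersection sizes over all size-pairings, and keeping this bookkeeping exact—so that the equivalence reads correctly both ways—is the crux of the argument; the algebra everywhere else is routine.
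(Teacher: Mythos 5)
Your proposal is correct and follows essentially the same route as the paper's proof: flatness of $S$ corresponds to $X$ being a $\{0,1\}$ matrix, the hypothesis $S\vo=\vz$ reduces the tight-frame condition to every $2$-subset of points lying in exactly $\frac{n}{4}$ blocks (the paper checks this via pairwise orthogonality of the rows $(\vo-2\vy_i)^T$ rather than via your matrix identity for $SS^T$, a cosmetic difference), and the column dot products translate into the block-size and intersection-size conditions by the same algebra. The delicacy you flag in the reverse direction is genuine and is in fact glossed over by the paper: its proof asserts that, given the block-size condition, ``all pairwise column dot products lie in $\{a,b\}$'' is equivalent to ``$4\vx_i\cdot\vx_j$ lies in $\{\ell-a,\ell-b,\ell+a-2b,\ell+b-2a\}$,'' whereas, as you observe, an intersection size from that set paired with the wrong block sizes produces dot products such as $2a-b$ or $3a-2b$, so the backward implication does not follow without tracking size-pairings. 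Your per-pairing table therefore supplies a precision that the paper's own argument (and, read literally, the backward direction of the theorem statement) lacks, and is the right way to make the equivalence airtight.
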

\begin{proof}
The $\ell \times n$ matrix $S$ is over $\{1, -1\}$ if and only if the $\ell \times (n-1)$ matrix $X$ is over~$\{0,1\}$.
and let $V$ and $\cB$ be the point set and block set corresponding to~$X$.

Let $X$ have rows $\vy_1^T, \dots, \vy_\ell^T$.
Since $S \vo = \vz$, we have $\vy_i \cdot \vo = \frac{n}{2}$ for each $i$.
The rows of $S$ are pairwise orthogonal if and only if $(\vo-2\vy_i) \cdot (\vo-2\vy_j) = -1$ for all distinct $i,j$, 
which is therefore equivalent to $\vy_i \cdot \vy_j = \frac{n}{4}$ for all distinct $i,j$,
or equivalently every $2$-subset of $V$ is contained in exactly $\frac{n}{4}$ blocks.

Let $X$ have columns $\vx_1,\dots, \vx_{n-1}$, so the columns of $S$ are $\vo, \vo-2\vx_1, \dots, \vo-2\vx_{n-1}$. 
The dot product of columns $\vo$ and $\vo-2\vx_i$ of $S$ lies in $\{a,b\}$ for all $i$ if and only if
\begin{equation}\label{Eq:vxivo}
2\vx_i \cdot \vo \in \{\ell-a, \ell-b\} \quad \mbox{for each $i$},
\end{equation}
or equivalently each block of $\cB$ has size $\frac{\ell-a}{2}$ or $\frac{\ell-b}{2}$.
The dot product of columns $\vo -2\vx_i$ and $\vo - 2\vx_j$ of $S$ lies in $\{a,b\}$ for all distinct $i,j$ if and only if 
\[
(\vo - 2\vx_i) \cdot (\vo-2\vx_j) \in \{a,b\} \quad \mbox{for all distinct $i,j$},
\]
which using \eqref{Eq:vxivo} is equivalent to
\[
4 \vx_i \cdot \vx_j \in \{\ell-a, \ell-b, \ell+a-2b, \ell+b-2a\} \quad \mbox{for all distinct $i,j$},
\]
or equivalently every two distinct blocks of $\cB$ have intersection size lying in \\ $\{ \frac{\ell-a}{4}, \frac{\ell-b}{4}, \frac{\ell+a-2b}{4}, \frac{\ell+b-2a}{4} \}$. 
\end{proof}

\end{subsection}

\end{section}

\begin{section}{Parameter constraints for a nontrivial $\BSHM(n,\ell,a,b)$} \label{Sec:constraints}
In this section, we restrict the parameters of a nontrivial $\BSHM(n,\ell,a,b)$ and obtain strong new constraints on their mutual relationships. Apart from the case $\ell=2$, whose parameters are characterized completely in \cref{Cor:ell2}, we identify five parameter classes in \cref{Subsec:classification} as summarized in \cref{Tab:summary}.
A key step in the analysis is to significantly simplify and extend the result of \cite[Proposition~2.6]{KS}, as \cref{Prop:SRG1} for $b = -a$ and as \cref{Prop:SRG2} for $b \ne -a$. The parameter relations $b=-a$ and $b \ne -a$ differ fundamentally as a result of the observation in \cref{Rem:basic}~$(iii)$. We obtain additional new parameter relations in \cref{Subsec:further}.

\begin{subsection}{Associated Strongly Regular Graph}

Strongly regular graphs are among the most well-studied graphs in algebraic graph theory~\cite{BH}. 
We shall associate a $\BSHM(n,\ell,a,b)$ with a strongly regular graph if $b\ne-a$, and with two strongly regular graphs if $b = -a$.

\begin{definition}
A \emph{strongly regular graph} with parameters $(v, k, \lambda, \mu)$ is a graph with $v$ vertices, each of degree $k$, such that every two adjacent vertices have exactly $\lambda$ common neighbors and every two non-adjacent vertices have exactly $\mu$ common neighbors. 
\end{definition}

\begin{remark}\label{Rem:SRG}
\mbox{}
\begin{enumerate}[$(i)$]
\item Let $A$ be the adjacency matrix of a $(v, k, \lambda, \mu)$ strongly regular graph. Then
\[
A^2=kI_v+\lambda A+\mu(J_v-I_v-A).
\]

\item If $G$ is a $(v,k,\lambda,\mu)$ strongly regular graph, then its complement $\overline{G}$ is a $(v,\overline{k},\overline{\lambda},\overline{\mu})$ strongly regular graph, where
\begin{equation}\label{eqn:oGparams}
(\overline{k},\,\overline{\lambda},\,\overline{\mu}) = (v-k-1,\, v-2k+\mu-2,\, v-2k+\lambda).
\end{equation}
\end{enumerate}
\end{remark}

\begin{definition}
\label{Def:assgraph}
Let $H$ be a $\BSHM(n,\ell,a,b)$ with respect to a submatrix~$H_1$. The graph $G$ associated with $H$ has vertex set $\{1,2,\ldots,n\}$, and vertices $i$ and $j$ are adjacent when the $i^\text{th}$ and $j^\text{th}$ columns of $H_1$ have dot product~$a$.
\end{definition}

Suppose $H = \begin{pmatrix}
    H_1 \\
    H_2
    \end{pmatrix}$ is a $\BSHM(n,\ell,a,b)$ with respect to~$H_1$, 
and let $G$ be its associated graph.
Interchanging $a$ and $b$ according to \cref{Lemma:A1A2}~$(i)$
interchanges $G$ with its complement~$\overline{G}$, whereas
applying the switching transformation \eqref{Eq:switching}
according to \cref{Lemma:A1A2}~$(ii)$ leaves $G$ unchanged.

We shall use the following definition in \cref{Prop:SRG1,Prop:SRG2}.

\begin{definition}
\label{Defn:kai}
Let $H$ be a $\BSHM(n,\ell,a,b)$ with respect to a submatrix~$H_1$. 
Write $k_a(i)$ for the number of columns in~$H_1$, other than column $i$, whose dot product with column $i$ is~$a$.
\end{definition}

\end{subsection}

\begin{subsection}{The case $\boldsymbol{b = -a}$}

By \cref{Rem:basic}~$(iii)$, we may negate columns of a $\BSHM(n,\ell,a,-a)$ in order to produce an all-ones row. This leads to the structural result of \cref{Prop:SRG1}.

\begin{proposition}\label{Prop:SRG1}
Suppose $H = \begin{pmatrix}H_1 \\ H_2 \end{pmatrix}$ is a nontrivial $\BSHM(n,\ell,a,-a)$ with respect to~$H_1$. Then:

\begin{enumerate}[$(1)$]
\item
$n(\ell-a^2) = \ell^2-a^2$

\item
$a$ is even and $\frac{\ell}{a}$ is an odd integer 
and $\frac{n}{4a}$ is an integer

\item
$H$ can be transformed, by negating columns as necessary, to a matrix 
$H' = \begin{pmatrix}H'_1 \\ \vo^T \\ H'_2 \end{pmatrix}$ where $H'_1$ has size $\ell \times n$, and then

\begin{enumerate}[$(i)$]
\item
$H'$ is a $\BSHM(n,\ell,a,-a)$ with respect to $H'_1$, and $H'_1 \vo=\vz$
\item
$k_a(i)$ (defined in relation to $H'_1$) is a constant $k'_a$ independent of $i$, 
and the graph $G$ associated with $H'$ is strongly regular with parameters 
\[
(v,k,\lambda,\mu) = (n,k'_a,\lambda',\mu') = 
\Big(n,\, \frac{(n-1)a-\ell}{2a}, \, \frac{n-4}{4}+\frac{n-4\ell}{4a}, \, \frac{n(a-1)}{4a} \Big)
\]
\end{enumerate}

\item
$H$ can be transformed, by negating columns as necessary, to a matrix 
$H'' = \begin{pmatrix}H''_1 \\ \vo^T \\ H''_2 \end{pmatrix}$ where $H''_1$ has size $(\ell-1) \times n$, and then

\begin{enumerate}[$(i)$]
\item
$H''$ is a $\BSHM(n,\ell,a,-a)$ with respect to $\begin{pmatrix} H''_1 \\ \vo^T \end{pmatrix}$, and $H''_2 \vo=\vz$
\item
$k_a(i)$ (defined in relation to $\begin{pmatrix} H''_1 \\ \vo^T \end{pmatrix}$) is a constant $k''_a$ independent of $i$, 
and the graph $G$ associated with $H''$ is strongly regular with parameters 
\begin{align*}
(v,k,\lambda,\mu) =&\Big(n,\, k'_a + \frac{n}{2a}, \, \lambda'+\frac{n}{2a}, \, \mu'+\frac{n}{2a}\Big) \\
                             =&\Big(n,\,  \frac{(n-1)a+n-\ell}{2a}, \, \frac{n-4}{4}+\frac{3n-4\ell}{4a}, \, \frac{n(a+1)}{4a} \Big).
\end{align*}
\end{enumerate}

\end{enumerate}
\end{proposition}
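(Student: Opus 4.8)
The plan is to prove (1) by a trace computation, to realise the two strongly regular graphs in (3) and (4) by negating columns so as to expose an all-ones row, and then to read off the integrality statements in (2) from the resulting graph parameters. Throughout I may assume $a>0$: by \cref{Lemma:A1A2}~$(i)$ we take $a\ge b=-a$, and $a\ne 0$ since $b\ne a$ by \cref{Res:KS}, so division by $a$ is legitimate. For (1), I would record from \eqref{Eq:Def} with $b=-a$ that $M:=H_1^TH_1=(\ell+a)I_n-aJ_n+2aA$, where $A$ is the $\{0,1\}$ matrix of \eqref{Eq:Aij}. Since $H_1H_1^T=nI_\ell$ by \eqref{Eq:MMT}, the matrix $M$ has eigenvalue $n$ with multiplicity $\ell$ and eigenvalue $0$ with multiplicity $n-\ell$, so $\mathrm{tr}(M^2)=n^2\ell$. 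On the other hand every off-diagonal entry of $M$ equals $\pm a$ and every diagonal entry equals $\ell$, so $\mathrm{tr}(M^2)=n\ell^2+n(n-1)a^2$. Equating the two expressions and simplifying gives $\ell(n-\ell)=(n-1)a^2$, which rearranges to (1).

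For the structural claims in (3) and (4) I would exploit \cref{Rem:basic}~$(iii)$: since $b=-a$, negating any column of $H$ merely swaps the two dot-product values $a$ and $-a$ on the pairs meeting that column, hence preserves the balanced splittable property. Negating columns so that a prescribed row of $H$ becomes $\vo^T$ therefore produces another $\BSHM(n,\ell,a,-a)$. For (3) I would choose a row lying outside $H_1$, obtaining $H'=\begin{pmatrix}H'_1\\ \vo^T\\ H'_2\end{pmatrix}$; for (4) I would choose a row of $H_1$, obtaining $H''=\begin{pmatrix}H''_1\\ \vo^T\\ H''_2\end{pmatrix}$ with the all-ones row now part of the reference submatrix $\begin{pmatrix}H''_1\\ \vo^T\end{pmatrix}$. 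In either case the all-ones row is orthogonal to every other row of the Hadamard matrix, so all those rows have zero sum; this gives $H'_1\vo=\vz$ in (3) and both $H''_1\vo=\vz$ and $H''_2\vo=\vz$ in (4), which is the content of the parts labelled $(i)$.

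To see that each $k_a(i)$ is constant, I would sum the dot products of column $i$ of the reference submatrix with every other column. Writing $\vs$ for the sum of all columns of the reference submatrix, this total equals $\mathrm{col}_i\cdot(\vs-\mathrm{col}_i)$. In (3) the reference submatrix has zero column total, so $\vs=\vz$ and the total is $-\ell$; in (4) one has $\vs$ equal to $n$ times the coordinate vector of the all-ones row, so the total is $n-\ell$, since each column meets the all-ones row in a $+1$. As the same total also equals $a\,k_a(i)-a\bigl(n-1-k_a(i)\bigr)=a\bigl(2k_a(i)-(n-1)\bigr)$, solving yields the stated constants $k'_a=\frac{(n-1)a-\ell}{2a}$ and $k''_a=k'_a+\frac{n}{2a}$, so $G$ is regular in both cases.

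For strong regularity I would invoke $M^2=nM$, which holds because $M=K^TK$ and $KK^T=nI_\ell$ for the relevant reference submatrix $K$. Substituting $M=(\ell+a)I_n-aJ_n+2aA$ and using regularity ($J_nA=AJ_n=k\,J_n$) lets me solve the identity $M^2=nM$ for $A^2$ as an explicit combination $\alpha I_n+\beta A+\gamma J_n$; comparison with \cref{Rem:SRG}~$(i)$ then certifies that $G$ is strongly regular and reads off $\lambda$ and $\mu$, which after simplification match the tabulated values and differ between (3) and (4) exactly by the shift $\frac{n}{2a}$ coming from the two choices of $\vs$. Finally I would deduce (2) from integrality of these parameters: the identity $\mu'=\frac{n}{4}-\frac{n}{4a}$ together with $4\mid n$ forces $\frac{n}{4a}\in\Z$; then $2a\mid n$ and integrality of $k'_a$ force $\ell\equiv a\pmod{2a}$, i.e.\ $\frac{\ell}{a}$ is an odd integer; and the remaining claim that $a$ is even I would simply quote from \cref{Res:b=-a}~$(i)$. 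I expect the principal obstacle to be organisational rather than conceptual: carrying the two parallel cases (3) and (4) through the regularity and strong-regularity computations at once while keeping the two column-sum vectors $\vs$ distinct, and checking that the resulting $(\lambda,\mu)$ really do reduce to the stated formulae.
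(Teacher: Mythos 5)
Your proposal is correct, and its overall architecture is the same as the paper's: negate columns to expose an all-ones row (legitimate since $b=-a$, \cref{Rem:basic}~$(iii)$), deduce zero row sums from orthogonality to that row, get constancy of $k_a(i)$ by summing dot products of a fixed column against all columns, obtain strong regularity by solving $M^2=nM$ for $A^2$ with $M=(\ell+a)I_n-aJ_n+2aA$, and read off the integrality claims in (2) from integrality of the graph parameters (you use $\mu'$ and $k'_a$ where the paper uses $k'_a$ and $\lambda'$; both work). The genuine, though minor, differences: you prove (1) by the global trace identity $\mathrm{tr}(M^2)=n^2\ell$, using the eigenvalue multiplicities of $H_1^TH_1$ (in effect \cref{Prop:evalues}), whereas the paper evaluates a single diagonal entry $\sum_j(\vc_i\cdot\vc_j)^2=n\ell$ by row orthogonality; your version has the small advantage of applying to $H$ directly, before any column negation. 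You also carry case (4) explicitly in parallel with case (3) via the two column-sum vectors $\vs$, where the paper proves only (3) and declares (4) similar; your identification of the uniform shift $\frac{n}{2a}$ in $(k,\lambda,\mu)$ as coming from the changed column sum (equivalently, the changed degree) is exactly the bookkeeping the paper leaves implicit, and your computations do reproduce the stated parameters.

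One blemish you should repair: the opening reduction ``assume $a>0$ by \cref{Lemma:A1A2}~$(i)$'' is not justified as written. The proposition is not symmetric under $a\mapsto -a$: interchanging $a$ and $b=-a$ replaces the associated graph by its complement and alters every formula in (2)--(4), so the statement for parameters $(a,-a)$ is not literally the statement for $(-a,a)$. (The negative case does follow from the positive one, but only after a complementation computation via \cref{Rem:SRG}~$(ii)$, which you do not perform.) Fortunately the assumption is never used: every step of your argument, namely the trace identity, the column negations, both $k_a(i)$ computations, $M^2=nM$, and the integrality deductions, needs only $a\ne 0$, which you correctly extract from \cref{Res:KS}. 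Deleting that sentence makes your proof cover both signs uniformly, exactly as the paper's does.
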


\begin{proof}
We shall prove that (1), (2), (3) hold when $H$ is transformed to $H'$;
the proof that (1), (2), (4) hold when $H$ is transformed to $H''$ is similar.

Negation of a column of $H$ leaves the dot product of distinct columns of the upper $\ell \times n$ submatrix in $\{-a,a\}$, and so $H'$ is a $\BSHM(n,\ell,a,-a)$ with respect to $H'_1$. Row orthogonality in $H'$ then gives $H'_1 \vo=\vz$. This proves~(3)$(i)$.

Let $H'_1=(h_{st})$ and let the $i^\text{th}$ column of $H'_1$ be~$\vc_i$. For each $i$, we have
\[
\sum_{j=1}^n \vc_i \cdot \vc_j = k_a(i)a+(n-1-k_a(i))(-a)+\ell 
\]
by \cref{Defn:kai} for $k_a(i)$, and also
\[
\sum_{j=1}^n \vc_i \cdot \vc_j 
= \sum_{j=1}^n\sum_{s=1}^{\ell} h_{si}h_{sj} =\sum_{s=1}^{\ell}h_{si}\sum_{j=1}^nh_{sj} =0
\]
using $H'_1 \vo = \vz$.
Equate these two expressions to show that $k_a(i)=k'_a$ is independent of $i$ and that 
\begin{equation}\label{Eq:ka}
k'_a = \frac{(n-1)a-\ell}{2a}.
\end{equation}
Similarly
\begin{align*}
\sum_{j=1}^n (\vc_i \cdot \vc_j)^2&=k'_aa^2+(n-1-k'_a)(-a)^2+\ell^2 =\sum_{j=1}^n\sum_{s=1}^{\ell} h_{si}h_{sj} \sum_{t=1}^{\ell} h_{ti}h_{tj} \\
&=\sum_{s=1}^{\ell}\sum_{t=1}^{\ell} h_{si}h_{ti} \sum_{j=1}^nh_{sj}h_{tj}=n\sum_{s=1}^{\ell} h_{si}^2=n\ell,
\end{align*}
using row orthogonality in $H'$ to show that $\sum_{j=1}^n h_{sj}h_{tj} = 0$ for $s \ne t$.
Therefore (1) holds. 

Let $A$ be the adjacency matrix of $G$. 
By \eqref{Eq:Def} and \eqref{Eq:Aij} and \cref{Def:assgraph}, we have
\begin{equation}\label{H1'TH1'}
H_1'^TH_1' = \ell I_n+a A-a(J_n-I_n-A)=(\ell+a)I_n+2aA-aJ_n.
\end{equation}
Square both sides of \eqref{H1'TH1'} and simplify the resulting expressions using $H'_1H'^T_1 = nI_{\ell}$ from \eqref{Eq:MMT} and 
\eqref{H1'TH1'} again and $AJ_n = J_nA = k'_a J_n$ and \eqref{Eq:ka} and (1) to show that
\[
A^2=\frac{(n-1)a-\ell}{2a}I_n+\Big(\frac{n-4}{4}+\frac{n-4\ell}{4a}\Big)A+\frac{n(a-1)}{4a}(J_n-I_n-A).
\]
By \cref{Rem:SRG}~$(i)$, $G$ is therefore a strongly regular graph with the parameters $(v,k,\lambda,\mu)$ given in (3)$(ii)$, and the proof of (3) is complete.

It remains to prove~(2).
We have that $a$ is even by \cref{Res:b=-a}~$(i)$. 
Since $k'_a$ is an integer and $n \equiv 0 \pmod{4}$, we see from $\eqref{Eq:ka}$ that $\frac{\ell}{a}$ 
is an odd integer. 
Since $\lambda$ is an integer, 
$\frac{n}{4a}$ 
is then an integer.
\end{proof}

\begin{remark}\label{Rem:SRG1}
\mbox{}
\begin{enumerate}[$(i)$]
\item
\cref{Prop:SRG1} recovers \cref{Res:ETF}.
By \cref{Prop:SRG1}~(2), we have $\ell \ne \frac{n}{2}$.
We may assume that $a > 0$, by interchanging $a$ and $-a$ if necessary and noting from \cref{Prop:SRG1}~(1) that $a \ne 0$.
By \cref{Prop:SRG1}~(1), we can then simplify the quantities
$\sqrt{\frac{\ell(n-1)}{n-\ell}}$ and  
$\sqrt{\frac{(n-\ell)(n-1)}{\ell}}$ and 
$(n-2\ell)\sqrt{\frac{n-1}{\ell(n-\ell)}}$
as
$\frac{\ell}{a}$ and 
$\frac{n-\ell}{a}$ and
$\frac{n}{a}-\frac{2\ell}{a}$, respectively,
and the conditions on these quantities stated in \cref{Res:ETF} then follow from \cref{Prop:SRG1}~(2).

Furthermore, it follows from \cref{Prop:SRG1}~(1) that $0 < a^2 \le \ell-1$.
Since $a$ is even by \cref{Prop:SRG1}~(2), we obtain $4 \le a^2 \le \ell-1$.
Now $a^2 \ne \ell-1$, otherwise 
$\frac{\ell}{a} = a+\frac{1}{a}$ is not an integer in contradiction to \cref{Prop:SRG1}~(2). Therefore
$4 \le a^2 \le \ell-2$, so $n = \frac{\ell^2-a^2}{\ell-a^2} \le \frac{\ell^2-a^2}{2} \le \frac{\ell^2-4}{2}$ which implies the first inequality in \cref{Res:ETF}. The second inequality in \cref{Res:ETF} is then implied by interchanging $H_1$ and~$H_2$.

\item
\cref{Prop:SRG1} recovers the additional integer constraints described in \cref{Res:ChaEqui}.
By \cref{Prop:SRG1}~(2) (and also by \cref{Res:b=-a}~$(i)$), both $x$ and $y$ are integers.
By \cref{Prop:SRG1}~(1), we can write
$\lambda = \frac{n}{4}-\frac{n}{2a}+\frac{1}{2}(\frac{\ell}{a}-1)$ and
$r = (\frac{n}{a}-\frac{\ell}{a}) \frac{1}{2} (\frac{\ell}{a}-1)$, 
which are both integers by \cref{Prop:SRG1}~(2).

\end{enumerate}
\end{remark}

\end{subsection}

\begin{subsection}{The case $\boldsymbol{b \ne -a}$}

We shall prove the structural result of \cref{Prop:SRG2} for the case $b \ne -a$ (as a counterpart to \cref{Prop:SRG1} for the case $b=-a$).
We first establish two preliminary results. \cref{Prop:evalues} concerns the eigenspaces of the matrix $H_1^T H_1$, and \cref{Lemma:mod4} is an elementary result about the dot product of two vectors over~$\{1,-1\}$.

\begin{proposition}\label{Prop:evalues}
Suppose that $H = \begin{pmatrix} H_1 \\ H_2 \end{pmatrix}$ is a nontrivial $\BSHM(n,\ell,a,b)$ with respect to~$H_1$. Then the eigenvalues of $H_1^TH_1$ are $n$ with corresponding eigenspace $\Row(H_1)$, and $0$ with corresponding eigenspace $\Row( H_2)$.
\end{proposition}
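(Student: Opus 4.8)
The plan is to exhibit explicit eigenvectors using the row-orthogonality relations \eqref{Eq:MMT} and then close the argument with a dimension count. First I would note that $H_1^TH_1$ is a symmetric $n \times n$ matrix, hence diagonalizable with an orthogonal eigenbasis, so it suffices to produce two orthogonal subspaces on which $H_1^TH_1$ acts as scalar multiplication by $n$ and by $0$ respectively, and whose dimensions sum to~$n$.

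For the eigenvalue~$n$, every vector of $\Row(H_1)$ has the form $H_1^T \vv$ for some $\vv \in \R^\ell$, and then $H_1^TH_1(H_1^T\vv) = H_1^T(H_1H_1^T)\vv = n H_1^T\vv$ using $H_1H_1^T = nI_\ell$ from \eqref{Eq:MMT}; thus $\Row(H_1)$ is contained in the $n$-eigenspace. For the eigenvalue~$0$, every vector of $\Row(H_2)$ has the form $H_2^T \vu$, and $H_1^TH_1(H_2^T\vu) = H_1^T(H_1H_2^T)\vu = \vz$ using $H_1H_2^T = 0_{\ell \times (n-\ell)}$ from \eqref{Eq:MMT}; thus $\Row(H_2)$ is contained in the $0$-eigenspace.

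It then remains to upgrade these inclusions to equalities, which I would do by dimension counting. The relations $H_1H_1^T = nI_\ell$ and $H_2H_2^T = nI_{n-\ell}$ force $H_1$ and $H_2$ to have full row rank, so $\dim \Row(H_1) = \ell$ and $\dim \Row(H_2) = n-\ell$. These two subspaces are mutually orthogonal, since $(H_1^T\vv)^T(H_2^T\vu) = \vv^T(H_1H_2^T)\vu = 0$ by \eqref{Eq:MMT}. Hence $\Row(H_1) \oplus \Row(H_2)$ is an orthogonal direct sum of dimension $\ell + (n-\ell) = n$, so it equals all of $\R^n$. The two eigenspaces found above therefore already exhaust $\R^n$, forcing each inclusion to be an equality and identifying $\Row(H_1)$ and $\Row(H_2)$ as the full $n$- and $0$-eigenspaces.

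I do not expect a genuine obstacle: the whole argument is driven by the three identities in \eqref{Eq:MMT}, which encode row orthogonality of~$H$. The only point needing slight care is confirming that the candidate eigenspaces have dimensions exactly $\ell$ and $n-\ell$ (rather than something smaller), and this is precisely where the full-row-rank consequence of \eqref{Eq:MMT} is invoked; without it the final dimension count could fail.
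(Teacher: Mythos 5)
Your proposal is correct and follows essentially the same route as the paper's proof: both exhibit $\Row(H_1)$ and $\Row(H_2)$ inside the $n$- and $0$-eigenspaces via the identities $H_1H_1^T = nI_\ell$ and $H_1H_2^T = 0_{\ell \times (n-\ell)}$ from \eqref{Eq:MMT}, and then close the argument with the rank computation $\rank(H_1) = \ell$, $\rank(H_2) = n-\ell$. Your version merely spells out the concluding dimension count (orthogonality of the two row spaces and symmetry of $H_1^TH_1$) a little more explicitly than the paper does.
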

\begin{proof}
By \eqref{Eq:MMT} we have $(H_1^T H_1) H_2^T = H_1^T (H_1 H_2^T) = 0_{n \times (n-\ell)}$, so the $n \times n$ matrix $H_1^T H_1$ has eigenvalue $0$ and the corresponding eigenspace contains $\Row(H_2)$.
We also have
$(H_1^T H_1) H_1^T = H_1^T (H_1 H_1^T) = H_1^T (n I_\ell) = n H_1^T$,
so $H_1^T H_1$ has eigenvalue $n$ and the corresponding eigenspace contains $\Row(H_1)$.
The result is now given by 
$\rank(H_1) = \rank(H_1 H_1^T) = \rank(n I_\ell) = \ell$
and similarly $\rank(H_2) = n-\ell$.
\end{proof}

Write $n_\vu$ for the number of $-1$ entries in a vector $\vu$ over~$\{1,-1\}$. 
\begin{lemma}\label{Lemma:mod4}
Let $\vu=(u_i)$, $\vv=(v_i)$ be vectors of length $\ell$ over~$\{1,-1\}$, and let $s$ be the number of~$i$ for which $u_i=v_i=-1$. Then
\begin{align*}
        \vu \cdot \vv = \ell - 2(n_\vu+n_\vv)+4 s \equiv \ell -2 (n_\vu + n_\vv) \pmod{4}.
\end{align*}
Furthermore, if $\vu \cdot \vv=\vu \cdot \vo=\vv \cdot \vo=0$, then $4 \mid \ell$ and $s=\frac{\ell}{4}$.
\end{lemma}

\begin{proof}
We calculate
\begin{align}
    \vu \cdot \vv &= s - (n_\vu - s) - (n_\vv-s)+(\ell-n_\vu-n_\vv+s) \nonumber \\
    &= \ell - 2(n_\vu+n_\vv)+4 s \label{Eq:vuvv} \\
    & \equiv \ell - 2(n_\vu+n_\vv) \pmod{4}. \nonumber
\end{align}

Now suppose $\vu \cdot \vo=\vv \cdot \vo=0$, so that $n_{\vu}=n_{\vv}=\frac{\ell}{2}$. Then \eqref{Eq:vuvv} shows that $\vu \cdot \vv = 0$ implies $s=\frac{\ell}{4}$.
\end{proof}

\begin{proposition}\label{Prop:SRG2}
Suppose $H = \begin{pmatrix}H_1 \\ H_2 \end{pmatrix}$ is a nontrivial $\BSHM(n,\ell,a,b)$ with respect to $H_1$, where $b \ne -a$.
Let $G$ be the graph associated with~$H$.
Then $k_a(i) = k_a$ is independent of $i$, and the matrix $H$ has exactly one of Types 1 and~2.

\begin{description}
\item[$H$ has Type 1]
    \mbox{}
    \begin{enumerate}[$({\rm A}1)$]
    \item
    $H_1 \vo = \vz$ and $(H_2 \vo)^T(H_2 \vo)=n^2$, and if $H_2$ does not contain the all-ones row~$\vo^T$ then $n^2$ is the sum of $n-\ell$ integer squares none of which is $n^2$
    \item
    $n(\ell+ab) = (\ell-a)(\ell-b)$ and $ab \le 0$
    \item 
    $\frac{\ell-b}{b-a}$ and $\frac{n}{b-a}$ and $\frac{n(b+1)}{2(b-a)}$ and $\frac{nb(b+1)}{(b-a)^2}$ are integers
    \item
    $k_a= \frac{\ell-b}{b-a} + \frac{nb}{b-a}$, and $G$ is strongly regular with parameters
    \[
    (v,k,\lambda,\mu) = 
     \Big(n, \, k_a, \, \frac{nb(b+1)}{(b-a)^2}+\frac{2(\ell-b)}{b-a}-\frac{n}{b-a}, \, \frac{nb(b+1)}{(b-a)^2}\Big).
    \]
    \end{enumerate}

\item[$H$ has Type 2]
    \mbox{}
    \begin{enumerate}[$({\rm B}1)$]
    \item
    $H_2 \vo = \vz$ and $(H_1 \vo)^T(H_1 \vo)=n^2$, and if $H_1$ does not contain the all-ones row~$\vo^T$ then $n^2$ is the sum of $\ell$ integer squares none of which is $n^2$
    \item
    $n(\ell+ab-a-b) = (\ell-a)(\ell-b)$ and $ab \le 0$
    \item 
    $\frac{\ell-b}{b-a}$ and $\frac{n}{b-a}$ and $\frac{n(b-1)}{2(b-a)}$ and $\frac{nb(b-1)}{(b-a)^2}$ are integers
    \item
    $k_a= \frac{\ell-b}{b-a} + \frac{n(b-1)}{b-a}$, and $G$ is strongly regular with parameters
    \[
    (v,k,\lambda,\mu) = 
     \Big(n, \, k_a, \, \frac{nb(b-1)}{(b-a)^2}+\frac{2(\ell-b)}{b-a}-\frac{n}{b-a}, \, \frac{nb(b-1)}{(b-a)^2}\Big).
    \]
    \end{enumerate}
\end{description}
Furthermore, 
$\begin{pmatrix}H_1 \\ H_2 \end{pmatrix}$ is a $\BSHM(n,\ell,a,b)$ of Type 1
with respect to~$H_1$ if and only if
$\begin{pmatrix}H_2 \\ H_1 \end{pmatrix}$ is a $\BSHM(n,n-\ell,-a,-b)$ of Type 2 with respect to~$H_2$, and the switching transformation \eqref{Eq:switching} maps between these two matrices. 
\end{proposition}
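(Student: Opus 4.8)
The plan is to make the Gram-type matrix $M = H_1^T H_1$ the single object carrying all the information, and to extract regularity, the Type~1/Type~2 dichotomy, and the strongly regular parameters from the one identity $M^2 = nM$. By \eqref{Eq:Def} and \cref{Def:assgraph}, $M = (\ell-b)I_n + (a-b)A + bJ_n$ where $A$ is the adjacency matrix of $G$; and by \cref{Prop:evalues} the only eigenvalues of $M$ are $n$ and $0$, so $M(M-nI_n)=0$, that is $M^2 = nM$. I would first prove regularity, namely that $k_a(i)$ is independent of $i$. This is the conceptual crux: unlike the case $b=-a$ of \cref{Prop:SRG1}, I may not negate columns (\cref{Rem:basic}~$(iii)$) to force $H_1\vo=\vz$, so the first-moment argument of \cref{Prop:SRG1} is unavailable. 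Instead I read the diagonal of $M^2=nM$. Since $M_{ii}=\ell$, the $(i,i)$ entry of $M^2$ is $\ell^2 + k_a(i)a^2 + (n-1-k_a(i))b^2$, while that of $nM$ is $n\ell$. As $b\ne a$ (\cref{Res:KS}) and $b\ne -a$ (hypothesis), we have $a^2\ne b^2$, and solving gives the constant $k_a = \frac{\ell(n-\ell)-(n-1)b^2}{a^2-b^2}$. Hence $A\vo=k_a\vo$, so $G$ is $k_a$-regular and $AJ_n=J_nA=k_aJ_n$.

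Regularity makes the dichotomy immediate: from $A\vo=k_a\vo$ and $J_n\vo=n\vo$ we get $M\vo = \big(\ell+b(n-1)+(a-b)k_a\big)\vo$, so $\vo$ is an eigenvector of $M$ with eigenvalue in $\{0,n\}$, and exactly one case occurs since $n\ne 0$. The value $0$ is equivalent to $\vo\in\Row(H_2)$, i.e.\ $H_1\vo=\vz$ (Type~1), and the value $n$ to $\vo\in\Row(H_1)$, i.e.\ $H_2\vo=\vz$ (Type~2). In each case I set the eigenvalue of $\vo$ equal to $0$ or $n$ and solve for $k_a$, obtaining the formulas (A4),(B4); equating these with the second-moment value of $k_a$ gives the Diophantine relations (A2),(B2) after routine algebra. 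Matching coefficients of $I_n,A,J_n$ in the now-commuting identity $M^2=nM$ (where $AJ_n+J_nA=2k_aJ_n$) returns $A^2 = k_aI_n+\lambda A+\mu(J_n-I_n-A)$ with the stated $\lambda,\mu$, so $G$ is strongly regular by \cref{Rem:SRG}~$(i)$; the sign condition $ab\le 0$ I take from \cref{Res:NonEqui}~$(iii)$.

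For the norm statements (A1),(B1) I use $\|H\vo\|^2 = \vo^T H^T H\vo = n^2$, splitting as $\|H_1\vo\|^2+\|H_2\vo\|^2$; in Type~1 the vanishing $H_1\vo=\vz$ forces $\|H_2\vo\|^2=n^2$, and since the entries of $H_2\vo$ are integer row sums this writes $n^2$ as a sum of $n-\ell$ integer squares, an entry being $\pm n$ precisely when that row of $H_2$ is $\pm\vo^T$ (excluded, after negating rows, by the hypothesis). For (A3),(B3), the eigenvalues of $M$ on $\vo^\perp$ (where $J_n$ acts as $0$) are $(\ell-b)+(a-b)\theta$ for the non-principal eigenvalues $\theta$ of $A$; forcing these into $\{0,n\}$ shows $\frac{\ell-b}{b-a}$ and $\frac{n-\ell+b}{a-b}$ are eigenvalues of the integer matrix $A$, hence rational algebraic integers, hence integers, giving $\frac{\ell-b}{b-a},\frac{n}{b-a}\in\Z$; and $\frac{nb(b+1)}{(b-a)^2}=\mu\in\Z$ from the strongly regular parameters.

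The remaining half-integer condition $\frac{n(b+1)}{2(b-a)}\in\Z$ is where I expect the genuine technical work. Writing $\frac{n}{b-a}=q\in\Z$, it amounts to the parity of $q(b+1)$, which I would obtain from a mod-$4$ analysis of column dot products via \cref{Lemma:mod4} (which also yields $a\equiv b\equiv\ell\pmod 2$, reducing the question to the case $b$ even); this is precisely the tool prepared immediately before the proposition. Finally, the \emph{furthermore} clause is a translation: \cref{Lemma:A1A2}~$(ii)$ shows $\begin{pmatrix}H_1\\H_2\end{pmatrix}$ is a $\BSHM(n,\ell,a,b)$ with respect to $H_1$ iff $\begin{pmatrix}H_2\\H_1\end{pmatrix}$ is a $\BSHM(n,n-\ell,-a,-b)$ with respect to $H_2$, with \eqref{Eq:switching} realizing the map; the associated graph $G$ is unchanged, the condition $H_1\vo=\vz$ (Type~1 for the first matrix) becomes the vanishing of the row sums of the lower block of the second matrix (Type~2), and $(\ell,a,b)\mapsto(n-\ell,-a,-b)$ carries (A1)--(A4) to (B1)--(B4).
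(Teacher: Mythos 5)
Your skeleton coincides with the paper's proof at almost every step: the paper also obtains constancy of $k_a(i)$ from the second moment $\sum_j (\vc_i \cdot \vc_j)^2 = n\ell$ (your diagonal of $M^2=nM$ is the same computation, routed through \cref{Prop:evalues} instead of row orthogonality), also gets the Type dichotomy from the eigenvalue $\ell-b+(a-b)k_a+bn \in \{0,n\}$ of $\vo$ under $H_1^TH_1$, also derives (A2),(B2) by equating the two expressions for $k_a$, also squares the Gram identity for (A4),(B4), and also settles the \emph{furthermore} clause via \cref{Lemma:A1A2}~$(ii)$ and the relation $c(\ell',a',b',k_{a'}) = n - c(\ell,a,b,k_a)$. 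Your one genuinely different (and valid) ingredient is deducing $\frac{\ell-b}{b-a}, \frac{n}{b-a} \in \Z$ from the fact that rational eigenvalues of the integer matrix $A$ are integers; the paper instead extracts these from the integrality of $k,\lambda,\mu$ \emph{together with} the half-integer claim. That dependence is exactly where your proposal has a genuine gap.

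The condition $\frac{n(b+1)}{2(b-a)} \in \Z$ (resp.\ $\frac{n(b-1)}{2(b-a)}$ for Type 2) is the one part of (A3),(B3) that follows from neither eigenvalue integrality nor the strongly regular parameters, and you leave it as a plan rather than a proof. The plan as stated would not close: writing $q = \frac{n}{b-a}$, the case $b$ odd is vacuous, and when $b$ is even you need $2(b-a) \mid n$, which a mod-$4$ analysis of column dot products cannot deliver --- such congruences only constrain $\ell, a, b \pmod 4$ (as in \cref{Prop:abnotell} and \cref{Cor:mod4}), and when $a \equiv b \pmod 4$ (e.g.\ $(n,\ell,a,b)=(64,14,6,-2)$) the parity classes $n_{\vc_i} \bmod 2$ do not separate dot product $a$ from dot product $b$, so no information about $q$ results. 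The paper's argument is instead a double count that exhibits $\frac{n(b+1)}{2(b-a)}$ as a cardinality: normalize $h_{11}=1$, use $H_1\vo = \vz$ (available in Type 1) to get $|I| = \frac{n}{2}$ for $I = \{j : h_{1j}=-1\}$, apply the second half of \cref{Lemma:mod4} to pairs of rows of $H_1$ to get $\sum_{j \in I} h_{sj} = 0$ for $s \ne 1$ and $-\frac{n}{2}$ for $s=1$, conclude $\sum_{j \in I} \vc_1 \cdot \vc_j = -\frac{n}{2}$, and split $I$ according to whether $\vc_1 \cdot \vc_j$ equals $a$ or $b$ to find $|I_a| = \frac{n(b+1)}{2(b-a)}$. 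You correctly identified \cref{Lemma:mod4} as the prepared tool, but the missing idea is summing dot products over the support of a single \emph{row} of $H_1$, not a congruence analysis of the columns.

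A second, smaller gap: you source $ab \le 0$ from \cref{Res:NonEqui}~$(iii)$, which carries the hypothesis $\ell > \max\{a,b\}$, whereas (A2),(B2) assert $ab \le 0$ unconditionally --- indeed \cref{Rem:SRG2}~$(iv)$ states that the proposition strengthens that result precisely by removing this hypothesis, so the citation fails in the boundary case $a = \ell$, which is the imprimitive regime (Type 1 $\BSHM(4rs,4s-1,4s-1,-1)$ and Type 2 $\BSHM(8rs,4s,4s,0)$) that the later classification depends on. The fix is the paper's two-line derivation from the relation you already have: rearrange $n(\ell+ab) = (\ell-a)(\ell-b)$ as $ab(n-1) = \ell(\ell-a-b-n)$, bound $\ell-a-b-n \le -b$ via \cref{Lemma:A1A2}~$(iii)$, and use $\ell < n-1$.
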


\begin{proof}
Relabel the rows and columns of $H$ if necessary so that the $(1,1)$ entry of $H$ is~$1$. 
Let the $i^\text{th}$ column of $H_1$ be~$\vc_i$. Using row orthogonality in $H$, a similar derivation to that in the proof of \cref{Prop:SRG1} gives
\[
\sum_{j=1}^n (\vc_i \cdot \vc_j)^2 = k_a(i)a^2+\big(n-1-k_a(i)\big)b^2 +\ell^2 = n\ell
\quad \mbox{for all $i$}.
\]
Therefore $k_a(i) = k_a$ is independent of $i$, and 
\begin{equation}
    k_a= \frac{\ell(n-\ell)-b^2(n-1)}{a^2-b^2} \label{Eq:Alpha}
\end{equation}
using that $b \ne -a$ by assumption and that $b \ne a$ by \cref{Res:KS}.

Let $A$ be the adjacency matrix of $G$, so that $A \vo = k_a \vo$ and
\begin{align}
    H_1^TH_1 &= \ell I_n+aA+b(J_n-I_n-A) \nonumber\\
             &=(\ell-b)I_n +(a-b)A+bJ_n \label{Eq:BaseEq}.
\end{align}
Multiplication on the right by $\vo$ gives
\begin{equation}\label{Eq:HH1}
    H_1^TH_1 \vo = c(\ell,a,b,k_a) \vo,
\end{equation}
where
\begin{equation}\label{Eq:c}
c(\ell,a,b,k_a) = \ell-b +(a-b)k_a +bn,
\end{equation}
and so $c(\ell,a,b,k_a)$ is an eigenvalue of~$H_1^T H_1$. Then by \cref{Prop:evalues}, 
\begin{equation}\label{Eq:either-or}
\mbox{$c(\ell,a,b,k_a) = 0$  or $n$}.
\end{equation}

By \cref{Lemma:A1A2}~$(ii)$, $H$ is a $\BSHM(n,\ell',a',b')$ with respect to~$H_2$, where $(\ell',a',b') = (n-\ell,-a,-b)$.
By \eqref{Eq:Alpha} and \eqref{Eq:c} we have
$c(\ell',a',b',k_{a'}) = n-c(\ell,a,b,k_a)$.
Therefore the transformation \eqref{Eq:switching} maps between the two cases specified in \eqref{Eq:either-or}.
Since the claimed conditions for $H$ having Type 2 are obtained from those for $H$ having Type 1 under the transformation \eqref{Eq:switching}, we may restrict attention to the case $c(\ell,a,b,k_a) = 0$ for the rest of the proof.
The final statement of the theorem will then follow from \cref{Lemma:A1A2}~$(ii)$.

Substitute $c(\ell,a,b,k_a) = 0$ in \eqref{Eq:HH1} to give $H_1^T H_1 \vo = \vz$. Multiply on the left by~$\vo^T$ to give
$(H_1 \vo)^T (H_1 \vo) = 0$, so that $H_1 \vo = \vz$.
Since $n^2= \vo^TH^TH\vo= \vo^T\left(H_1^TH_1+H_2^TH_2\right)\vo$, we then obtain $(H_2\vo)^T(H_2\vo)=n^2$. Therefore, if $H_2$ does not contain the all-ones row~$\vo^T$ then $n^2$ is the sum of $n-\ell$ integer squares none of which is $n^2$. This proves~(A1).

Substitute $c(\ell,a,b,k_a) = 0$ in \eqref{Eq:c} to give
\begin{equation}\label{Eq:simpka}
k_a= \frac{\ell-b}{b-a} + \frac{nb}{b-a}.
\end{equation}
Equate this expression with \eqref{Eq:Alpha} to give
\[
n(\ell+ab) = (\ell-a)(\ell-b).
\]
Rearrange as $ab(n-1) = \ell(\ell-a-b-n)$ and use the relation 
$\ell-a-b-n \le -b$ from \cref{Lemma:A1A2}~$(iii)$ and $\ell < n-1$ to show that $ab \le 0$.
This proves~(A2).

Square both sides of \eqref{Eq:BaseEq} and simplify the resulting expressions using $H_1H_1^T = nI_{\ell}$ and \eqref{Eq:BaseEq} again and $AJ_n = J_nA = k_a J_n$ and \eqref{Eq:simpka} and (A2) to show that
\[
A^2 = 
\Big(\frac{\ell-b}{b-a} + \frac{nb}{b-a}\Big)I_n +
\Big(\frac{2(\ell-b)}{b-a} - \frac{n}{b-a} +\frac{nb(b+1)}{(b-a)^2}\Big)A+
\frac{nb(b+1)}{(b-a)^2}(J_n-I_n-A).
\]
Therefore $G$ is a strongly regular graph with the parameters $(v,k,\lambda,\mu)$ given in~(A4), completing the proof of~(A4).

We claim that $\frac{n(b+1)}{2(b-a)}$ is an integer. 
Since each of the parameters $k, \lambda, \mu$ is an integer, 
this implies that $\frac{2n(b+1)}{b-a} + \lambda -\mu -2k = \frac{n}{b-a}$ is an integer, and then (A3) follows.

It remains to prove the claim.
Let $H_1 = (h_{st})$ and let $I = \{j : h_{1j} = -1\}$.
Since $H_1 \vo = \vz$ by (A1), each row of $H_1$ has dot product $0$ with $\vo$, and $|I| = \frac{n}{2}$.
For $s \ne 1$, row orthogonality in $H_1$ and \cref{Lemma:mod4} show that the number of
$j$ for which $(h_{1j}, h_{sj}) = (-1,-1)$ is $\frac{n}{4}$ and so the number of $j$ for which $(h_{1j}, h_{sj}) = (-1,1)$ is also~$\frac{n}{4}$.
Therefore
\begin{equation}\label{Eq:jinI}
\sum_{j \in I} h_{sj} = 
\sum_{j :\, h_{1j} = -1} h_{sj} = 
  \begin{cases}
  0				& \mbox{for $s \ne 1$}, \\
  -\frac{n}{2} 			& \mbox{for $s = 1$},
  \end{cases}
\end{equation}
and so
\begin{equation}\label{Eq:jinI2}
\sum_{j \in I} \vc_1 \cdot \vc_j = \sum_{j \in I} \sum_{s=1}^\ell h_{s1} h_{sj} = \sum_{s=1}^\ell h_{s1} \sum_{j \in I} h_{sj} = -\tfrac{n}{2}
\end{equation}
using \eqref{Eq:jinI} and the initial assumption that $h_{11} = 1$.

Now let
\[
I_a = \{j \in I : \vc_1 \cdot \vc_j =a \},
\]
and then
\[
\sum_{j \in I} \vc_1 \cdot \vc_j 
  = \sum_{j \in I_a} \vc_1 \cdot \vc_j +  
    \sum_{j \in I_b} \vc_1 \cdot \vc_j 
  = |I_a| a + (\tfrac{n}{2}-|I_a|)b.
\]
Equate this expression to \eqref{Eq:jinI2} to show that $|I_a| = \frac{n(b+1)}{2(b-a)}$, which must be an integer.
This proves the claim.
\end{proof}

\begin{remark}\label{Rem:SRG2}
\mbox{}
\begin{enumerate}[$(i)$]
\item 
Suppose that $H$ is a $\BSHM(n,\ell,a,b)$ with $b \ne -a$. 
We may apply the switching transformation \eqref{Eq:switching} if necessary so that $\ell \le \frac{n}{2}$. In fact, by \cref{Prop:SRG2}, we may apply \eqref{Eq:switching} so that
either $\ell < \frac{n}{2}$ and $H$ has Type 1, or else $\ell \le \frac{n}{2}$ and $H$ has Type~2.

\item 
The graph $G$ does not change under the switching transformation \eqref{Eq:switching} (and so its parameters do not change), even though the graph parameters for Type~2 take a different form from those for Type~1.

\item 
The equality in \cref{Prop:SRG2}~(A2),(B2) and the expression for $k_a$ in \cref{Prop:SRG2}~(A4),(B4) 
were also derived in the context of two-distance finite unit-norm tight frames \cite[Theorem 2.4]{BGOY}.
 
\item 
\cref{Prop:SRG2}~(A3),(B3), together with \cref{Prop:SRG1}~(2), recovers \cref{Res:NonEqui}~$(i)$.
\cref{Prop:SRG2} (A2),(B2) strengthens \cref{Res:NonEqui}~$(iii)$ by removing the condition that $\ell>\max\{a,b\}$.

\end{enumerate}
\end{remark}

\end{subsection}

\begin{subsection}{Further Parameter Relations}\label{Subsec:further}

We now derive further relations among the four parameters $n, \ell, a, b$ of a balanced splittable Hadamard matrix (both when $b=-a$ and when $b \ne -a$). 
Recall the notation $n_\vu$ for the number of $-1$ entries in a vector $\vu$ over~$\{1,-1\}$. 

\begin{proposition}\label{Prop:abnotell}
Suppose there exists a nontrivial $\BSHM(n,\ell,a,b)$. 
Then:
\begin{enumerate}[$(i)$]
\item
$\ell \equiv a \equiv b \pmod{2}$

\item
$\ell \equiv a \pmod{4}$ or $\ell \equiv b \pmod{4}$.
\end{enumerate}
\end{proposition}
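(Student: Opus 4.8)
The plan is to reduce both parts to the elementary congruence of \cref{Lemma:mod4}. Writing $\vc_i$ for the $i^\text{th}$ column of $H_1$, that lemma gives $\vc_i \cdot \vc_j \equiv \ell - 2(n_{\vc_i} + n_{\vc_j}) \pmod 4$ for distinct $i,j$, and in particular $\vc_i \cdot \vc_j \equiv \ell \pmod 2$. Thus every dot product that actually occurs among distinct columns of $H_1$ has the parity of $\ell$, and its residue modulo $4$ is controlled solely by the parities of the numbers of $-1$ entries in the two columns involved.

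Before using this, I would record that in a nontrivial $\BSHM$ both values $a$ and $b$ are genuinely attained, since part $(i)$ requires this to pin down $b$. Indeed, if every off-diagonal dot product equalled a single value, say $a$, then $H_1^T H_1 = (\ell - a)I_n + a J_n$ would have $\ell - a$ as an eigenvalue on the whole hyperplane $\vo^\perp$, hence of multiplicity at least $n-1$; but by \cref{Prop:evalues} the eigenvalues of $H_1^T H_1$ are $n$ and $0$ with multiplicities $\ell$ and $n - \ell$, both at most $n - 2$ in the nontrivial range $1 < \ell < n-1$, a contradiction. With this in hand, part $(i)$ is immediate: $a$ and $b$ are both attained and every attained dot product is $\equiv \ell \pmod 2$, so $\ell \equiv a \equiv b \pmod 2$.

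For part $(ii)$ I would argue by contradiction, assuming $\ell \not\equiv a$ and $\ell \not\equiv b \pmod 4$. By part $(i)$ each of $a,b$ has the parity of $\ell$, so the assumption forces $a \equiv b \equiv \ell + 2 \pmod 4$. Assigning to each column $i$ the parity $p_i := n_{\vc_i} \bmod 2 \in \{0,1\}$, the refined congruence reads $\vc_i \cdot \vc_j \equiv \ell - 2(p_i + p_j) \pmod 4$; since every dot product lies in $\{a,b\}$ and both are $\equiv \ell + 2 \pmod 4$, this gives $2(p_i + p_j) \equiv 2 \pmod 4$, i.e.\ $p_i + p_j$ is odd, for every pair of distinct columns. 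The main obstacle is packaging this into a clean contradiction and coordinating the mod-$2$ information of part $(i)$ with the mod-$4$ bookkeeping: the condition says $i \mapsto p_i$ is a proper $2$-colouring of the complete graph on the $n \ge 4$ columns, which is impossible since a complete graph on at least three vertices is not bipartite. This contradiction establishes $(ii)$.
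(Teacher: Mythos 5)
Your proof is correct, and its core is the same as the paper's: both parts rest on \cref{Lemma:mod4} combined with a pigeonhole argument on the parities of the quantities $n_{\vc_i}$. Your part $(ii)$ is simply the contrapositive phrasing of the paper's direct argument, which picks two columns with $n_{\vc_i} \equiv n_{\vc_j} \pmod{2}$ (possible since $n > 2$) and concludes that their dot product, lying in $\{a,b\}$, is $\equiv \ell \pmod{4}$. The one genuine difference is how you justify that both values $a$ and $b$ are actually attained: the paper simply cites \cref{Res:KS}, whereas you re-derive this fact from \cref{Prop:evalues}, observing that if all off-diagonal dot products equalled a single value $a$ then $H_1^T H_1 = (\ell-a)I_n + aJ_n$ would have the eigenvalue $\ell - a$ with multiplicity at least $n-1$, while by \cref{Prop:evalues} its spectrum consists of $n$ and $0$ with multiplicities $\ell$ and $n-\ell$, each at most $n-2$ in the nontrivial range $1 < \ell < n-1$. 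That spectral detour is valid and makes your argument self-contained (in effect it re-proves \cref{Res:KS}), at the mild cost of invoking \cref{Prop:evalues}, which the paper's proof of this proposition does not need.
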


\begin{proof}
Let $H$ be a nontrivial $\BSHM(n,\ell,a,b)$ with respect to a submatrix~$H_1$,
and let the $i^\text{th}$ column of $H_1$ be~$\vc_i$.
\begin{enumerate}[$(i)$]
\item
By \cref{Lemma:mod4}, we have $\vc_i \cdot \vc_j \equiv \ell \pmod{2}$ for all distinct $i,j$.
By \cref{Res:KS}, both $a$ and~$b$ are attained as dot products of distinct columns of~$H_1$ and therefore $\ell \equiv a \equiv b \pmod{2}$.

\item
Since $n > 2$, there are distinct $i,j$ for which $n_{\vc_i} \equiv n_{\vc_j} \pmod{2}$. For this $i,j$, by \cref{Lemma:mod4} we have $\vc_i \cdot \vc_j \equiv \ell\pmod{4}$. Since $\vc_i \cdot \vc_j \in \{a,b\}$, this gives $\ell \equiv a \pmod{4}$ or $\ell \equiv b \pmod{4}$.
\qedhere
\end{enumerate}
\end{proof}

Recall the quantity $k_a(i)$ introduced in \cref{Defn:kai}, and the result of \cref{Prop:SRG2} that $k_a(i) = k_a$ is independent of~$i$ when $b \ne -a$.
We now use \cref{Prop:abnotell} to determine the value of $k_a$ in terms of $n$ only when $a \not \equiv b \pmod{4}$.

\begin{proposition}\label{Prop:alpha}
Suppose there exists a nontrivial $\BSHM(n,\ell,a,b)$, where $a \not \equiv b \pmod{4}$.
Then $b \ne -a$ and, up to interchange of $a$ and $b$, we have 
$\ell \equiv a \pmod{4}$ and $k_a = \frac{n}{2}-1$.
\end{proposition}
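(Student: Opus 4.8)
The plan is to translate the hypothesis $a \not\equiv b \pmod 4$ into a parity statement about the columns of $H_1$, and then to exploit the constancy of $k_a(i)$ established in \cref{Prop:SRG2}. First I would dispose of the claim $b \ne -a$: if instead $b = -a$, then \cref{Prop:SRG1}~(2) forces $a$ to be even, whence $a - b = 2a \equiv 0 \pmod 4$ and so $a \equiv b \pmod 4$, contradicting the hypothesis. Having ruled this out, \cref{Prop:abnotell}~(ii) gives $\ell \equiv a \pmod 4$ or $\ell \equiv b \pmod 4$, and since $a \not\equiv b \pmod 4$ exactly one of these holds; interchanging $a$ and $b$ if necessary, I would assume $\ell \equiv a \pmod 4$ and $\ell \not\equiv b \pmod 4$. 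Combining the latter with $\ell \equiv b \pmod 2$ from \cref{Prop:abnotell}~(i) yields $b \equiv \ell + 2 \pmod 4$.

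Next I would let $\vc_1, \dots, \vc_n$ be the columns of $H_1$ and apply \cref{Lemma:mod4}, which gives $\vc_i \cdot \vc_j \equiv \ell - 2(n_{\vc_i} + n_{\vc_j}) \pmod 4$. Since the dot product lies in $\{a,b\}$ with $a \equiv \ell$ and $b \equiv \ell + 2 \pmod 4$, this congruence shows that $\vc_i \cdot \vc_j = a$ precisely when $n_{\vc_i}$ and $n_{\vc_j}$ have the same parity, and $\vc_i \cdot \vc_j = b$ precisely when they have opposite parity. Thus the value of the dot product of two distinct columns is governed entirely by the parities of their numbers of $-1$ entries.

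Finally, I would write $E$ and $O$ for the number of columns $\vc_j$ with $n_{\vc_j}$ even and odd respectively, so $E + O = n$. Because both $a$ and $b$ are attained as dot products in a nontrivial $\BSHM$ (by \cref{Res:KS}), there is a pair of columns of opposite parity, forcing $E, O \ge 1$. By \cref{Prop:SRG2}, applicable since $b \ne -a$, the quantity $k_a(i)$ equals a constant $k_a$. But for a column $i$ with $n_{\vc_i}$ even, $k_a(i)$ counts the remaining even-parity columns, namely $E - 1$, whereas for $n_{\vc_i}$ odd it equals $O - 1$. Since both parities occur, $E - 1 = O - 1$, so $E = O = \frac{n}{2}$ and hence $k_a = \frac{n}{2} - 1$.

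The main obstacle is the conceptual step in the second paragraph: recognizing that the hypothesis $a \not\equiv b \pmod 4$ makes the two dot-product values distinguishable modulo $4$, so that \cref{Lemma:mod4} converts the combinatorial quantity $k_a(i)$ into a count of columns sharing a fixed parity. Once this parity dictionary is in place, the constancy of $k_a(i)$ does the remaining work almost automatically.
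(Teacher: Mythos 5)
Your proposal is correct and follows essentially the same route as the paper's proof: rule out $b=-a$ via the evenness of $a$, use \cref{Prop:abnotell} and \cref{Lemma:mod4} to show the dot product equals $a$ exactly when the two columns have the same parity of $-1$ entries, then invoke the constancy of $k_a(i)$ from \cref{Prop:SRG2}. The only cosmetic difference is that you count the even- and odd-parity columns globally ($E-1 = O-1$), whereas the paper picks one column of each parity and notes $k_a(1)+k_a(2)=n-2$; these are the same argument.
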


\begin{proof}
Let $H$ be a nontrivial $\BSHM(n,\ell,a,b)$ with respect to a submatrix~$H_1$, where $a \not \equiv b \pmod{4}$.
Then $b \ne -a$ by \cref{Res:b=-a}~$(i)$, and so \cref{Prop:SRG2} applies.

After interchanging $a$ and $b$ if necessary, we have $(a,b) \equiv (\ell,\ell+2) \pmod{4}$ by \cref{Prop:abnotell}~$(i)$.
Then by \cref{Lemma:mod4}, for all distinct $i,j$ we have
\begin{align}
    \vc_i \cdot \vc_j =
    \begin{cases}
      a & \mbox{for $n_{\vc_i} \equiv n_{\vc_j} \pmod{2}$}, \\
      b & \mbox{for $n_{\vc_i} \not \equiv n_{\vc_j} \pmod{2}$}.
   \end{cases} \label{Eq:DotProd}
\end{align}
Since $b$ is attained as a dot product of distinct columns of $H_1$, we may relabel the columns of $H$ if necessary so that $n_{\vc_1} \equiv 0 \pmod{2}$ and $n_{\vc_2} \equiv 1 \pmod{2}$.
Then from \cref{Defn:kai} and \eqref{Eq:DotProd} we obtain
\begin{align*}
    k_a(1) & = \big| \{ t \ne 1: n_{\vc_t} \equiv 0 \pmod{2} \}\big|, \\
    k_a(2) & = \big| \{ t \ne 2: n_{\vc_t} \equiv 1 \pmod{2} \}\big|,
\end{align*}
and so $k_a(1) + k_a(2) = n-2$.
Since $k_a(1) = k_a(2) = k_a$ by \cref{Prop:SRG2}, we conclude that
$k_a=\frac{n}{2}-1$.
\end{proof}

\begin{theorem}\label{Thm:mod4}
Suppose $H$ is a nontrivial $\BSHM(n,\ell,a,b)$, where $a \not \equiv b \pmod{4}$. 
Then, up to application of the switching transformation \eqref{Eq:switching} 
and interchange of $a$ and~$b$, 
the matrix $H$ is a Type~2 $\BSHM(n,2,2,0)$.
\end{theorem}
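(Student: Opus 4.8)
The plan is to reduce to the case where $H$ has Type~2 and then pin down the parameters using the value of $k_a$ supplied by \cref{Prop:alpha}. First I would invoke \cref{Prop:alpha}: since $a \not\equiv b \pmod{4}$ it gives $b \ne -a$ (so that \cref{Prop:SRG2} is available) and, after interchanging $a$ and $b$ if necessary, the normalization $\ell \equiv a \pmod{4}$ together with the key identity $k_a = \frac{n}{2}-1$. By \cref{Prop:SRG2} the matrix $H$ has Type~1 or Type~2. If it has Type~1, I would apply the switching transformation \eqref{Eq:switching}, which by the final statement of \cref{Prop:SRG2} produces a Type~2 $\BSHM(n,n-\ell,-a,-b)$; one checks that the relabelled parameters $(\ell,a,b) \mapsto (n-\ell,-a,-b)$ still satisfy $\ell \equiv a \pmod{4}$ (using $n \equiv 0 \pmod{4}$) and, because switching leaves the associated graph $G$ unchanged while sending the dot-product value $a$ to $-a$, that $k_a = \frac{n}{2}-1$ persists. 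Thus, up to switching and interchange, I may assume $H$ is a Type~2 $\BSHM(n,\ell,a,b)$ with $\ell \equiv a \pmod{4}$, $b \ne -a$, and $k_a = \frac{n}{2}-1$.

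Next I would equate the two expressions for $k_a$. The formula in \cref{Prop:SRG2}~(B4) reads $k_a = \frac{\ell-b+n(b-1)}{b-a}$; setting this equal to $\frac{n}{2}-1$ and clearing denominators yields, after routine simplification, the single relation $\ell = a + nm$, where $m := \frac{2-a-b}{2}$. Here $m$ is an integer because $a \equiv b \pmod{2}$ by \cref{Prop:abnotell}~$(i)$. The crucial step is then to bound $m$ using \cref{Lemma:A1A2}~$(iii)$, namely $|a| \le \min\{\ell,\,n-\ell\}$. Writing $a = \ell - nm$, the inequality $|a| \le \ell$ forces $m \ge 0$, while $|a| \le n-\ell$ forces $m \le 1$; hence $m \in \{0,1\}$.

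Finally I would dispose of the two cases. If $m=1$ then $a+b=0$, contradicting $b \ne -a$, so this case cannot occur. If $m=0$ then $\ell = a$, and substituting $\ell = a$ into the Type~2 parameter relation \cref{Prop:SRG2}~(B2), $n(\ell+ab-a-b)=(\ell-a)(\ell-b)$, collapses the right-hand side to $0$ and leaves $nb(a-1)=0$; since $n \ne 0$ and $\ell = a > 1$ by nontriviality, this forces $b=0$, whence $a = 2-b = 2$ (from $m=0$) and $\ell = a = 2$. Therefore $H$ is a Type~2 $\BSHM(n,2,2,0)$, as claimed. I expect the main obstacle to be the bookkeeping in the normalization step --- verifying that both the identity $k_a = \frac{n}{2}-1$ and the congruence $\ell \equiv a \pmod{4}$ survive the Type~1-to-Type~2 switching --- rather than the final parameter computation, which is short once the relation $\ell = a+nm$ and the bound $m \in \{0,1\}$ are in hand.
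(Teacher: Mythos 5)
Your proposal is correct and takes essentially the same route as the paper: both arguments rest on $k_a = \frac{n}{2}-1$ from \cref{Prop:alpha}, equate it with the Type~2 expression for $k_a$ in \cref{Prop:SRG2}~(B4) (your relation $\ell = a+nm$ with $m=\frac{2-a-b}{2}$ is exactly the paper's equation $n-\ell+a=\frac{n}{2}(a+b)$), bound the result via \cref{Lemma:A1A2}~$(iii)$, and eliminate cases using the parity constraint of \cref{Prop:abnotell}~$(i)$ and $b \ne -a$ before concluding $(\ell,a,b)=(2,2,0)$ from (B2). The only difference is organizational: you convert a Type~1 matrix to Type~2 at the outset via the final statement of \cref{Prop:SRG2}, correctly checking that $k_a=\frac{n}{2}-1$ survives switching because the associated graph is unchanged, whereas the paper first normalizes $\ell \le \frac{n}{2}$, runs the computation in the Type~2 case, and dismisses the Type~1 case at the end by noting that switching would force $\ell = n-2$, contradicting $\ell < \frac{n}{2}$.
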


\begin{proof}
By \cref{Prop:alpha}, we have $b \ne -a$ (and so \cref{Prop:SRG2} applies) and we may interchange $a$ and~$b$ if necessary so that $\ell \equiv a \pmod{4}$ and $k_a = \frac{n}{2}-1$.
Apply the switching transformation \eqref{Eq:switching} if necessary so that either $\ell < \frac{n}{2}$ and $H$ has Type 1, or else $\ell \le \frac{n}{2}$ and $H$ has Type~2.

Suppose firstly that $\ell \le \frac{n}{2}$ and $H$ has Type~2.
Combine $k_a=\frac{n}{2}-1$ and $k_a = \frac{\ell-b}{b-a} + \frac{n(b-1)}{b-a}$ from \cref{Prop:SRG2}~(B4) to obtain
\begin{align}\label{Eq:Mod2n}
    n-\ell+a= \frac{n}{2}\left(a+b\right).
\end{align}
Since $0 \le n-\ell+a \le n$ by \cref{Lemma:A1A2}~$(iii)$, we find that $a+b \in \{0,1,2\}$. Since $a + b \ne 1$ by \cref{Prop:abnotell}~$(i)$, and $b \ne -a$, this implies that $a+b = 2$. Then $\ell=a$ from \eqref{Eq:Mod2n}, so \cref{Prop:SRG2}~(B2) gives $b=0$, and therefore $a=\ell=2$. 
The resulting parameter values are $(\ell,a,b) = (2,2,0)$, and the condition $\ell \le \frac{n}{2}$ is satisfied because $H$ is nontrivial.

Otherwise $\ell < \frac{n}{2}$ and $H$ has Type~1.
By applying the transformation \eqref{Eq:switching} to the above analysis, we find $\ell=n-2$. This does not satisfy the condition $\ell < \frac{n}{2}$ because $H$ is nontrivial.
\end{proof}

We can now characterize the parameters of a nontrivial $\BSHM(n,2,a,b)$, in
\cref{Cor:ell2}~$(i)$. We restate in \cref{Cor:ell2}~$(ii)$ the associated \cref{Res:Constructions}~$(v)$ (derived from \cite[Theorem~3.4]{KS}) and include a short proof.

\begin{corollary}\label{Cor:ell2}
\mbox{}
\begin{enumerate}[$(i)$]
\item
Suppose $H$ is a nontrivial $\BSHM(n,2,a,b)$.
Then, up to application of the switching transformation \eqref{Eq:switching} and interchange of $a$ and~$b$, the matrix $H$
is a Type 2 $\BSHM(n,2,2,0)$.

\item 
A Hadamard matrix of order $n \ge 4$ is equivalent to a Type 2 $\BSHM(n,2,2,0)$.

\end{enumerate}
\end{corollary}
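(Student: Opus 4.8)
The plan is to deduce $(i)$ almost entirely from \cref{Thm:mod4} by showing that the hypothesis $\ell = 2$ forces $a \not\equiv b \pmod 4$, and to prove $(ii)$ by an explicit equivalence. First, for $(i)$, I would pin down the possible values of $a$ and $b$: \cref{Prop:abnotell}~$(i)$ gives $a \equiv b \equiv \ell \equiv 0 \pmod 2$, so $a$ and $b$ are even, while \cref{Lemma:A1A2}~$(iii)$ bounds $|a|, |b| \le \min\{\ell, n-\ell\} = 2$ (using $n \ge 4$), and \cref{Res:KS} gives $a \ne b$. Hence $a$ and $b$ are distinct elements of $\{-2, 0, 2\}$. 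Reducing mod $4$, the only such pair with $a \equiv b \pmod 4$ is $\{a, b\} = \{2, -2\}$, that is, $b = -a$.

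So the crux of $(i)$ is to rule out $b = -a$. For this I would invoke \cref{Prop:SRG1}~(1), which for a $\BSHM(n,\ell,a,-a)$ gives $n(\ell - a^2) = \ell^2 - a^2$; substituting $\ell = 2$ and $a = \pm 2$ yields $-2n = 0$, a contradiction (equivalently, \cref{Rem:SRG1}~$(i)$ forces $4 \le a^2 \le \ell - 1 = 1$). Therefore $b \ne -a$, so $a \not\equiv b \pmod 4$, and \cref{Thm:mod4} immediately gives that, up to the switching transformation \eqref{Eq:switching} and interchange of $a$ and $b$, the matrix $H$ is a Type~2 $\BSHM(n,2,2,0)$, completing $(i)$.

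For $(ii)$ I would argue by direct construction. Starting from any Hadamard matrix of order $n \ge 4$, negate those columns whose first entry is $-1$ (an equivalence operation) so that the first row becomes $\vo^T$, obtaining an equivalent matrix $H' = \begin{pmatrix} H_1 \\ H_2 \end{pmatrix}$ in which $H_1$ consists of the first two rows. Every column of $H_1$ then has the form $(1, \pm 1)^T$, so the dot product of two distinct columns of $H_1$ is $2$ when their second entries agree and $0$ when they differ; both values occur because the second row is orthogonal to $\vo^T$ and so has exactly $n/2 \ge 2$ entries of each sign. Thus $H'$ is a $\BSHM(n,2,2,0)$ with respect to $H_1$. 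To check it is Type~2, I would observe that each row of $H_2$ is orthogonal to the all-ones first row, giving $H_2 \vo = \vz$; since $H_1 \vo = (n, 0)^T \ne \vz$, \cref{Prop:SRG2} assigns Type~2 rather than Type~1.

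The bulk of the difficulty is already absorbed by \cref{Prop:SRG2} and \cref{Thm:mod4}; the only genuinely new step is the elimination of the case $b = -a$ in $(i)$, and the remaining work is routine verification of membership and of the type.
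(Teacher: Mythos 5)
Your proposal is correct and takes essentially the same route as the paper: for $(i)$ you combine \cref{Prop:abnotell}~$(i)$, \cref{Lemma:A1A2}~$(iii)$, \cref{Res:KS}, and \cref{Prop:SRG1}~(1) to force $a \not\equiv b \pmod{4}$ and then invoke \cref{Thm:mod4}, which is exactly the paper's argument (the paper simply cites \cref{Prop:SRG1}~(1) for $b \ne -a$ where you spell out the substitution $\ell=2$, $a=\pm 2$). For $(ii)$ your construction is the paper's column-negation argument; your explicit verification of the dot products and of Type~2 via $H_2\vo = \vz$ and $H_1\vo \ne \vz$ just fills in details the paper leaves implicit.
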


\begin{proof}
\mbox{}
\begin{enumerate}[$(i)$]
\item
We have $a \equiv b \equiv 0 \pmod{2}$ by \cref{Prop:abnotell}~$(i)$.\!
Since $|a|, |b| \le 2$ by \cref{Lemma:A1A2}~$(iii)$, this implies
$a, b \in \{-2,0,2\}$. 
Now $b \ne a$ by \cref{Res:KS}, and $b \ne -a$ by \cref{Prop:SRG1}~(1).
Therefore $a \not \equiv b \pmod{4}$, and the result follows from \cref{Thm:mod4}.

\item
Suppose $H$ is a Hadamard matrix of order $n \ge 4$. Transform $H$, by negating its columns as necessary, to a Hadamard matrix $H'$ whose first row is the all-ones row. Then $H'$ is a $\BSHM(n,2,2,0)$ with respect to its upper $2 \times n$ submatrix.
\qedhere
\end{enumerate}
\end{proof}

In view of \cref{Cor:ell2}, from now on we shall consider a $\BSHM(n,\ell,a,b)$ only in the cases $2 < \ell < n-2$.

\begin{corollary}\label{Cor:mod4}
Suppose there exists a $\BSHM(n,\ell,a,b)$, where $2<\ell<n-2$. Then $\ell \equiv a \equiv b \pmod{4}$.
\end{corollary}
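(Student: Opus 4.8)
The plan is to argue by contradiction, leveraging the classification already obtained in \cref{Thm:mod4}. Suppose, for a nontrivial $\BSHM(n,\ell,a,b)$ with $2 < \ell < n-2$, that $a \not\equiv b \pmod 4$. Then the hypothesis of \cref{Thm:mod4} is met, so up to application of the switching transformation \eqref{Eq:switching} and interchange of $a$ and $b$, the matrix $H$ is a Type~2 $\BSHM(n,2,2,0)$; in particular the second parameter of the tuple equals $2$ after these operations.

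Next I would track how the two admissible operations act on the second parameter. Interchanging $a$ and $b$ (via \cref{Lemma:A1A2}~$(i)$) leaves $\ell$ unchanged, while the switching transformation \eqref{Eq:switching} replaces $\ell$ by $n-\ell$. Hence the value $2$ produced by \cref{Thm:mod4} must equal either $\ell$ or $n-\ell$, giving $\ell \in \{2, n-2\}$. This directly contradicts the standing hypothesis $2 < \ell < n-2$, so in fact $a \equiv b \pmod 4$.

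Finally, I would combine this with \cref{Prop:abnotell}~$(ii)$, which guarantees that $\ell \equiv a \pmod 4$ or $\ell \equiv b \pmod 4$. Since we have just shown $a \equiv b \pmod 4$, either alternative forces $\ell \equiv a \equiv b \pmod 4$, completing the proof.

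The main obstacle---really the only point requiring care---is the bookkeeping in the second paragraph: one must confirm that the only way the second coordinate of the parameter tuple can be altered by the permitted transformations is via the exchange $\ell \leftrightarrow n-\ell$, so that the equality to $2$ asserted by \cref{Thm:mod4} is correctly translated back into the constraint $\ell \in \{2, n-2\}$ on the \emph{original} parameters. Once this translation is in place, the contradiction with $2 < \ell < n-2$ is immediate and the remaining deduction is routine.
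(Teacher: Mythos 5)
Your proof is correct and follows essentially the same route as the paper's: both deduce $a \equiv b \pmod{4}$ by noting that otherwise \cref{Thm:mod4} would force $\ell \in \{2,n-2\}$ (contradicting $2 < \ell < n-2$), and then combine this with \cref{Prop:abnotell}~$(ii)$ to conclude $\ell \equiv a \equiv b \pmod{4}$. The only difference is that you spell out explicitly the bookkeeping of how the switching transformation and the interchange of $a,b$ act on the parameter $\ell$, which the paper compresses into the parenthetical remark ``(otherwise $\ell \in \{2,n-2\}$)''.
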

\begin{proof}
By \cref{Prop:abnotell}~$(ii)$, we have $\ell \equiv a \pmod{4}$ or $\ell \equiv b \pmod{4}$.
Since $H$ is nontrivial, by \cref{Thm:mod4} we have $a \equiv b \pmod{4}$ (otherwise $\ell \in \{2,n-2\}$). 
Therefore $\ell \equiv a \equiv b \pmod{4}$.
\end{proof}

\begin{remark}
The nonexistence results of \cref{Res:36}, found by computer search, all occur as special cases of \cref{Cor:mod4}.
\end{remark}

\end{subsection}

\begin{subsection}{Primitive and Imprimitive}

A strongly regular graph $G$ is \emph{primitive} if both $G$ and its complement $\overline{G}$ are connected; otherwise $G$ is \emph{imprimitive}. 
We write $tK_m$ for the union of $t$ disjoint copies of the complete graph~$K_m$. We then have the following characterization of imprimitive strongly regular graphs.

\begin{lemma}[{\cite[p.~117]{BH}}]\label{Lemma:imprimSRG}
Suppose $G$ is a $(v,k,\lambda,\mu)$ strongly regular graph, and let 
$(\overline{k},\,\overline{\lambda},\,\overline{\mu})$ be as in~\eqref{eqn:oGparams}.
Then $G$ is imprimitive if and only if $\mu=0$ or $\overline{\mu} = 0$.
If $\mu = 0$, then $G = tK_m$ 
where $(v,k,\lambda,\mu) = (tm,\,m-1,\,m-2,\,0)$.
If $\overline{\mu} = 0$, then $\overline{G} = tK_m$ where 
$(v,\,\overline{k},\,\overline{\lambda},\,\overline{\mu}) = (tm,\,m-1,\,m-2,\,0)$.
\end{lemma}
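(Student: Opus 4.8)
The plan is to reduce the entire lemma to a single structural fact: a strongly regular graph satisfies $\mu = 0$ if and only if it is a disjoint union of cliques of a common size. Once this is in hand, the statement about $\overline{G}$ follows immediately by applying the same fact to the complement (whose $\mu$-parameter is $\overline{\mu}$ by \cref{Rem:SRG}~$(ii)$), and the equivalence with imprimitivity follows by tracking which of $G$, $\overline{G}$ is disconnected.

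First I would prove the forward structural implication: if $\mu = 0$, then $G = tK_m$. Suppose $v$ is a vertex and $u, w$ are two distinct neighbors of $v$. If $u$ and $w$ were non-adjacent, they would be non-adjacent vertices sharing the common neighbor $v$, contradicting $\mu = 0$. Hence the neighborhood of every vertex is a clique, so the closed neighborhood $N[v]$ is a clique of size $k+1$. Since $G$ is $k$-regular, each vertex of this clique already has all $k$ of its neighbors inside $N[v]$, so $N[v]$ has no edges to the rest of $G$ and is a connected component isomorphic to $K_{k+1}$. Setting $m = k+1$, every component is a copy of $K_m$, which gives $G = tK_m$ with $v = tm$, $k = m-1$, $\lambda = m-2$, and $\mu = 0$, as claimed.

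For the remaining directions I would argue combinatorially. If $G$ is disconnected, pick two vertices in different components; they are non-adjacent with no common neighbor, so $\mu = 0$. Thus disconnectedness of $G$ implies $\mu = 0$, and combined with the previous paragraph we get that $\mu = 0$ holds if and only if $G$ is a disjoint union of equal cliques. Applying this to $\overline{G}$ and invoking \cref{Rem:SRG}~$(ii)$ yields the corresponding statement with $\overline{\mu}$ and the structure $\overline{G} = tK_m$ when $\overline{\mu} = 0$. Finally, $G$ is imprimitive exactly when $G$ or $\overline{G}$ is disconnected. The direction $(\Rightarrow)$ of the equivalence is immediate from the two implications just recorded. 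For $(\Leftarrow)$ I would observe that $\mu = 0$ forces $G = tK_m$, which is disconnected when $t \ge 2$ and equals the complete graph $K_m$ (so that $\overline{G}$ is edgeless, hence disconnected) when $t = 1$; either way $G$ is imprimitive, and symmetrically for $\overline{\mu} = 0$.

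The main obstacle is not any single computation but the careful bookkeeping of the degenerate boundary cases, namely the complete and edgeless graphs, needed to ensure the equivalence ``imprimitive $\iff \mu = 0$ or $\overline{\mu} = 0$'' holds without an implicit nontriviality hypothesis on $G$. An alternative to the combinatorial core would be spectral: the multiplicity of the eigenvalue $k$ of the adjacency matrix equals the number of connected components, so a disconnected strongly regular graph has $k$ among its two non-principal eigenvalues, which are the roots of $x^2 - (\lambda-\mu)x - (k-\mu)$; combining the resulting condition with the standard identity $k(k-\lambda-1) = (v-k-1)\mu$ then forces $\mu = 0$. I would nonetheless favor the combinatorial argument, since it simultaneously delivers the explicit decomposition $G = tK_m$ and the parameter formulas $(v,k,\lambda,\mu) = (tm,\,m-1,\,m-2,\,0)$.
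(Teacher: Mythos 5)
Your proof is correct. Note that the paper itself gives no proof of this lemma: it is quoted directly from Brouwer and Haemers \cite{BH}, so there is no internal argument to compare against. Your combinatorial argument is the standard one for this fact: $\mu=0$ forces every neighborhood to be a clique, hence every closed neighborhood is a component isomorphic to $K_{k+1}$, and the rest follows by complementation via \eqref{eqn:oGparams} and by tracking which of $G$, $\overline{G}$ is disconnected. Your attention to the boundary cases is warranted here, since the paper's definition of a strongly regular graph does not exclude complete or edgeless graphs (the conditions on $\lambda$ and $\mu$ hold vacuously there), so the $t=1$ case of $G=tK_m$ genuinely arises and your observation that $\overline{G}$ is then edgeless, hence disconnected, is exactly what is needed to close the equivalence.
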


The association of a strongly regular graph to a balanced splittable Hadamard matrix given in \cref{Def:assgraph} motivates the following definition.

\begin{definition}\label{Def:primBSHM}
A balanced splittable Hadamard matrix is \emph{primitive} or \emph{imprimitive} according to whether its associated graph $G$ is primitive or imprimitive, respectively.
\end{definition}

In the case $b=-a$, we may take a nontrivial $\BSHM(n,\ell,a,b)$ to be primitive.

\begin{proposition}\label{Prop:imprimitiveb=-a}
Suppose $H$ is a nontrivial $\BSHM(n,\ell,a,-a)$. Then, up to negation of columns, $H$ is primitive.
\end{proposition}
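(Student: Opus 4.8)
The plan is to produce primitivity not for the given $H$ but for an explicit column-negation of it, namely the matrix $H'$ supplied by \cref{Prop:SRG1}~(3). Since $H'$ is obtained from $H$ by negating columns, it suffices to show that the graph $G$ associated with $H'$ is primitive. By \cref{Prop:SRG1}~(3)$(ii)$ this graph is strongly regular with parameters $(v,k,\lambda,\mu)=\big(n,\, \tfrac{(n-1)a-\ell}{2a},\, \tfrac{n-4}{4}+\tfrac{n-4\ell}{4a},\, \tfrac{n(a-1)}{4a}\big)$, so in particular $\mu=\tfrac{n(a-1)}{4a}$ is available immediately.

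Next I would invoke \cref{Lemma:imprimSRG}, according to which this strongly regular graph is imprimitive if and only if $\mu=0$ or $\overline{\mu}=0$. The one genuine computation is to evaluate $\overline{\mu}=v-2k+\lambda$ from \eqref{eqn:oGparams} by substituting the parameters above. Here $2k=(n-1)-\tfrac{\ell}{a}$ and $\lambda=\tfrac{n-4}{4}+\tfrac{n}{4a}-\tfrac{\ell}{a}$, so the terms in $\ell$ cancel and I would obtain $\overline{\mu}=1+\tfrac{n-4}{4}+\tfrac{n}{4a}=\tfrac{n(a+1)}{4a}$. As a sanity check this agrees with the value $\mu'+\tfrac{n}{2a}$ recorded in \cref{Prop:SRG1}~(4).

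Finally I would appeal to the parity constraint that is special to the case $b=-a$: by \cref{Prop:SRG1}~(2) the integer $a$ is even, and by \cref{Prop:SRG1}~(1) it is nonzero, so $a\ne 1$ and $a\ne -1$. Hence $\mu=\tfrac{n(a-1)}{4a}\ne 0$ and $\overline{\mu}=\tfrac{n(a+1)}{4a}\ne 0$; indeed, after normalizing $a>0$ as in \cref{Rem:SRG1}~$(i)$ both quantities are manifestly positive. By \cref{Lemma:imprimSRG} the associated graph of $H'$ is therefore primitive, which gives the claim.

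The argument is short and I do not foresee a real obstacle. The only technical step is the cancellation in the $\overline{\mu}$ computation, and the essential conceptual point is that $a$ being even is exactly what rules out the two degenerate values $a=\pm1$ that would otherwise force $\mu=0$ or $\overline{\mu}=0$; this is precisely why the $b=-a$ case behaves differently from $b\ne-a$ (where imprimitive families such as $4sK_r$ and $4sK_{2r}$ do occur, per \cref{Tab:summary}).
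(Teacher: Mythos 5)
Your proposal is correct and takes essentially the same approach as the paper's proof: both pass to the column-negated matrix $H'$ supplied by \cref{Prop:SRG1}~(3), invoke \cref{Lemma:imprimSRG}, and use the evenness of $a$ from \cref{Prop:SRG1}~(2) to rule out the degenerate values $a=\pm 1$. The only cosmetic difference is that you compute $\overline{\mu}=\tfrac{n(a+1)}{4a}$ directly from \eqref{eqn:oGparams} (your cancellation is correct), whereas the paper reaches the same dichotomy by interchanging $a$ and $-a$ to assume $\mu=0$ without loss of generality, citing the parameters of \cref{Prop:SRG1}~(4) for the complementary case.
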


\begin{proof}
Negate columns of $H$ as necessary so that its associated graph is strongly regular with parameters $(v,k,\lambda,\mu)$ as given in \cref{Prop:SRG1}~(3) (or alternatively as in \cref{Prop:SRG1}~(4)).
Suppose, for a contradiction, that $H$ is imprimitive.
By interchanging $a$ and $-a$ (which interchanges $G$ and $\overline{G}$) if necessary, by \cref{Lemma:imprimSRG} we may take $\mu = 0$. Then $a=1$ from \cref{Prop:SRG1}~(3) (or alternatively $a=-1$ from \cref{Prop:SRG1}~(4)), which contradicts that $a$ is even from \cref{Prop:SRG1}~(2).
\end{proof}

In the case $b \ne -a$, we use \cref{Prop:SRG2} to greatly restrict the possible parameters of an imprimitive $\BSHM(n,\ell,a,b)$.
We shall study these parameter sets further in \cref{Sec:imprimitive}.

\begin{proposition}\label{Prop:imprimitivebne-a}
Suppose $H$ is an imprimitive $\BSHM(n,\ell,a,b)$, where $2 < \ell < n-2$ and $b \ne -a$.
Let $G$ be the graph associated with $H$. 
Then, up to application of the switching transformation \eqref{Eq:switching} and interchange of $a$ and~$b$, one of the following holds: 
\begin{enumerate}[$(i)$]
    \item $H$ is a Type 1 $\BSHM(4rs,4s-1,4s-1,-1)$ for some integers $r \ge 2$, $s \ge 1$, and $G = 4sK_r$

    \item $H$ is a Type 2 $\BSHM(8rs,4s,4s,0)$ for some integers $r, s \ge 1$, and $G = 4s K_{2r}$.
\end{enumerate}
\end{proposition}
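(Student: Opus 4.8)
The plan is to combine the imprimitivity dichotomy of \cref{Lemma:imprimSRG} with the Type~1/Type~2 structure and strongly regular graph parameters supplied by \cref{Prop:SRG2}, and then to normalize using the two symmetries available: interchange of $a$ and $b$, and the switching transformation \eqref{Eq:switching}. I would first record precisely how these symmetries act on the relevant data. By the discussion following \cref{Def:assgraph}, interchanging $a$ and $b$ replaces $G$ by its complement $\overline{G}$, hence swaps $\mu$ and $\overline{\mu}$; a short computation shows that it also leaves the value $c(\ell,a,b,k_a)$ of \eqref{Eq:c} unchanged (the $k_a$-terms cancel after substituting $k_b=n-1-k_a$), so it preserves the Type. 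By the final statement of \cref{Prop:SRG2}, the switching transformation instead fixes $G$ (hence fixes $\mu$) but exchanges Type~1 and Type~2.

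Since $b\ne-a$ and $2<\ell<n-2$, \cref{Prop:SRG2} applies and $H$ has a well-defined Type with $G$ strongly regular. As $G$ is imprimitive, \cref{Lemma:imprimSRG} gives $\mu=0$ or $\overline{\mu}=0$; interchanging $a$ and $b$ if necessary I may assume $\mu=0$, so $G=tK_m$ with $(v,k,\lambda,\mu)=(tm,\,m-1,\,m-2,\,0)$ and in particular $k_a=m-1$. I would then substitute $\mu=0$ into the formula for $\mu$ in \cref{Prop:SRG2}: in Type~1, $\frac{nb(b+1)}{(b-a)^2}=0$ forces $b\in\{0,-1\}$, and in Type~2, $\frac{nb(b-1)}{(b-a)^2}=0$ forces $b\in\{0,1\}$ (using $a\ne b$). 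Applying switching, which flips the Type and negates $b$, carries the Type~1 case $b=0$ to the Type~2 case $b=0$, and the Type~2 case $b=1$ to the Type~1 case $b=-1$; hence it suffices to treat the two canonical cases $b=-1$ in Type~1 and $b=0$ in Type~2.

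In each canonical case I would substitute $b$ into the relation of \cref{Prop:SRG2}~(A2), respectively~(B2). Both collapse to $(\ell-a)(n-\ell')=0$ with $\ell'\in\{\ell+1,\ell\}$, and since $\ell'<n$ (as $\ell<n-1$) this forces $\ell=a$. With $\ell=a$, \cref{Cor:mod4} gives $\ell=a\equiv-1\pmod 4$ in Type~1 and $\ell=a\equiv 0\pmod 4$ in Type~2, so $\ell=a=4s-1$ respectively $\ell=a=4s$ for some $s\ge 1$. Evaluating $k_a$ from \cref{Prop:SRG2}~(A4)/(B4) at $\ell=a$ yields $k_a=\frac{n}{\ell+1}-1$ in Type~1 and $k_a=\frac{n}{\ell}-1$ in Type~2, so in both cases $m=k_a+1=\frac{n}{4s}$ and $t=n/m=4s$. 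Writing $n=4rs$ in Type~1 gives $G=4sK_r$ and the parameter set $(4rs,4s-1,4s-1,-1)$; and the integrality of $\frac{n(b-1)}{2(b-a)}=\frac{n}{8s}$ from \cref{Prop:SRG2}~(B3) shows $m=\frac{n}{4s}$ is even, so $m=2r$ and $n=8rs$, giving $G=4sK_{2r}$ and $(8rs,4s,4s,0)$.

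The ranges then follow from the nontriviality bound $2<\ell<n-2$: in Type~1 the value $r=1$ would give $m=1$, hence $n=4s=\ell+1$ and $\ell=n-1$, which is excluded, so $r\ge 2$; in Type~2 the bounds $r,s\ge 1$ suffice. I expect the main obstacle to be the bookkeeping of the two symmetries—especially confirming that interchange of $a,b$ preserves the Type while switching exchanges the Types—so that all four $(\text{Type},b)$ combinations provably collapse onto exactly the two stated canonical forms, and so that the normalization $\mu=0$ can be imposed without loss of generality.
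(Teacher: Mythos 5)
Your proof is correct, and it rests on the same pillars as the paper's: \cref{Prop:SRG2} for the Type dichotomy and the strongly regular graph parameters, \cref{Lemma:imprimSRG} together with interchange of $a$ and $b$ to normalize $\mu=0$ (so $G=tK_m$ and $k_a=m-1$), and \cref{Cor:mod4} for the congruences. Where you genuinely diverge is in reducing the four combinations of Type and $b$-value to the two canonical ones. The paper first applies the switching transformation \eqref{Eq:switching} to enforce the half-size normalization of \cref{Rem:SRG2}~$(i)$ (Type 1 with $\ell<\frac{n}{2}$, or Type 2 with $\ell\le\frac{n}{2}$), and must then \emph{rule out} $b=0$ in Type 1 and $b=1$ in Type 2: in each case (A2)/(B2) forces $a=\ell-n$, and then $\frac{\ell-b}{b-a}=\frac{\ell}{n-\ell}$ (Type 1), respectively $\frac{n}{b-a}=\frac{n}{n-\ell+1}$ (Type 2), violates the integrality in (A3)/(B3) because of the half-size bound. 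You never invoke that normalization; instead you transport the unwanted cases by switching into the canonical ones (Type 1 with $b=0$ goes to Type 2 with $b=0$; Type 2 with $b=1$ goes to Type 1 with $b=-1$), noting that switching fixes $G$ and hence preserves $\mu=0$. This is legitimate because the proposition asserts its conclusion only up to these symmetries, and it saves the two impossibility arguments; what the paper's route buys in exchange is the sharper fact, reflected in \cref{Tab:summary}, that under the standard normalization the discarded cases cannot occur at all. Your remaining steps ($a=\ell$ from (A2)/(B2), the congruences, $m=k_a+1=\frac{n}{4s}$, evenness of $m$ in Type 2 from $\frac{n}{8s}\in\Z$ by (B3), and $r\ge2$ in Type 1 from $\ell<n-2$ rather than from $\ell<\frac{n}{2}$) are all correct minor variants of the paper's. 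One remark: your verification that interchanging $a$ and $b$ preserves the Type (invariance of $c(\ell,a,b,k_a)$ in \eqref{Eq:c} under $k_a\mapsto n-1-k_a$) is correct but not actually needed, since your subsequent case analysis covers both Types anyway.
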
 

\begin{proof}
By \cref{Prop:SRG2}, the graph $G$ is strongly regular and its parameters $(v,k,\lambda,\mu)$ are given by (A4) or~(B4).
Up to interchange of $a$ and $b$ (which interchanges $G$ and $\overline{G}$), by
\cref{Lemma:imprimSRG} we may take $\mu = 0$.
Apply the switching transformation \eqref{Eq:switching} if necessary so that either $\ell < \frac{n}{2}$ and $H$ has Type~1 (in \cref{Prop:SRG2}), or else $\ell \le \frac{n}{2}$ and $H$ has Type~2.  

Suppose firstly that $\ell < \frac{n}{2}$ and $H$ has Type~1.
The expression for $\mu$ in (A4) gives $b \in \{0,-1\}$. We cannot have $b = 0$, otherwise $a=\ell-n$ by (A2) and then $\frac{\ell-b}{b-a} = \frac{\ell}{n-\ell}$ is not an integer because $\ell < \frac{n}{2}$, contrary to~(A3).
Therefore $b = -1$, and then $(\ell-a)(n-\ell-1) = 0$ by~(A2). Since $\ell < n-2$, this gives $a = \ell$. 
Then by \cref{Cor:mod4} we may write $\ell = 4s-1$ for some integer $s \ge 1$.
Since $\frac{n}{b-a} = -\frac{n}{4s}$ is an integer by \cref{Prop:SRG2}~(A3) and $\ell < \frac{n}{2}$ by assumption, we may write $n = 4rs$ for some integer~$r \ge 2$.
Comparison of the parameters $v,k$ in (A4) and \cref{Lemma:imprimSRG} then gives $G = 4sK_r$. This satisfies the conditions for~$(i)$.

Otherwise $\ell \le \frac{n}{2}$ and $H$ has Type~2.
The expression for $\mu$ in (B4) gives $b \in \{0,1\}$. 
We cannot have $b = 1$, otherwise $a=\ell-n$ by (B2) and then 
$\frac{n}{b-a} = \frac{n}{n-\ell+1}$ is not an integer because $2 < \ell \le \frac{n}{2}$, contrary to~(B3).
Therefore $b=0$, and $a=\ell$ by (B2). 
By \cref{Cor:mod4}, we may write $\ell = 4s$ for some integer $s \ge 1$.
Since $\frac{n(b-1)}{2(b-a)} = \frac{n}{8s}$ is an integer by \cref{Prop:SRG2}~(B3), we may then write $n=8rs$ for some integer $r \ge 1$, and comparison of (B4) and \cref{Lemma:imprimSRG} gives $G=4sK_{2r}$. This satisfies the conditions for~$(ii)$.
\end{proof}

\end{subsection}

\begin{subsection}{\cref{Tab:summary} classification}\label{Subsec:classification}

We now combine the results of this section to restrict a $\BSHM(n,\ell,a,b)$ satisfying $2 < \ell < n-2$ to lie in one of five classes.

\begin{theorem}\label{Thm:summary}
Suppose $H$ is a $\BSHM(n,\ell,a,b)$, where $2 < \ell < n-2$, and let $G$ be the graph associated with~$H$.
Then, up to application of the switching transformation \eqref{Eq:switching} and interchange of $a,b$ and (for $b=-a$) negation of columns, one of the five cases displayed in the columns of \cref{Tab:summary} holds.
\end{theorem}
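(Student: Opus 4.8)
The plan is to assemble \cref{Tab:summary} as a bookkeeping exercise that collects the structural results already proved in this section, rather than to derive anything genuinely new. The starting point is the dichotomy $b = -a$ versus $b \ne -a$, which by \cref{Rem:basic}~$(iii)$ behaves fundamentally differently. First I would handle the case $b = -a$: \cref{Prop:imprimitiveb=-a} shows that, up to negation of columns, such a matrix is primitive, so this case contributes exactly the single primitive column of the table. The parameter relation $n = \frac{\ell^2 - a^2}{\ell - a^2}$, the congruence $\ell \equiv a \pmod 4$, the evenness of $a$, and the divisibility conditions $\frac{\ell}{a}$ odd and $\frac{n}{4a} \in \Z$ all come directly from \cref{Prop:SRG1}~(1),(2). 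The graph parameters $(v,k,\lambda,\mu)$ with the two choices $c = 0$ and $c = \frac{n}{2a}$ are exactly the two strongly regular graphs produced in \cref{Prop:SRG1}~(3) and~(4); after normalizing $a > 0$ via interchange of $a,-a$ (permissible since $a \ne 0$ by relation~(1)), one reads off the table entries.

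Next I would treat $b \ne -a$, where \cref{Prop:SRG2} splits every such matrix into Type 1 or Type 2. By \cref{Rem:SRG2}~$(i)$ I may apply the switching transformation \eqref{Eq:switching} so that either $\ell < \frac{n}{2}$ with Type 1, or $\ell \le \frac{n}{2}$ with Type 2; these yield the column headings $n > 2\ell$ for Type 1 and $n \ge 2\ell$ for Type 2. Within each type I split on primitive versus imprimitive. The imprimitive sub-cases are supplied verbatim by \cref{Prop:imprimitivebne-a}: parameters $(4rs,4s-1,4s-1,-1)$ with $G = 4sK_r$ for Type 1, and $(8rs,4s,4s,0)$ with $G = 4sK_{2r}$ for Type 2. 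The primitive sub-cases simply restate the remaining data of \cref{Prop:SRG2}: the parameter relations $n(\ell+ab) = (\ell-a)(\ell-b)$ and $n(\ell+ab-a-b) = (\ell-a)(\ell-b)$ from (A2),(B2), the integer divisibility conditions from (A3),(B3), and the strongly regular parameters from (A4),(B4). The congruence $\ell \equiv a \equiv b \pmod 4$ and the sign normalization $a > 0 \ge b$ follow from \cref{Cor:mod4} together with the inequality $ab \le 0$ in (A2),(B2).

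The one point requiring a small argument, rather than pure citation, is confirming that these five classes are genuinely exhaustive and that the normalizations are consistent — in particular, that the case $b \ne -a$ partitions cleanly into exactly Type 1 and Type 2 with no overlap, which is guaranteed by the ``exactly one of Types 1 and 2'' clause of \cref{Prop:SRG2}, and that the interchange of $a,b$ used to enforce $a > 0 \ge b$ is compatible with the interchange used to set $\mu = 0$ in the imprimitive analysis. I would verify that the final-statement normalizations in the theorem (switching, interchange of $a,b$, and negation of columns only when $b = -a$) match exactly the operations invoked by \cref{Prop:SRG1}, \cref{Prop:SRG2}, \cref{Prop:imprimitiveb=-a}, and \cref{Prop:imprimitivebne-a}, since column negation is permitted in general only when $b = -a$ by \cref{Rem:basic}~$(iii)$.

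I expect the main obstacle to be not mathematical but organizational: ensuring that the transformations applied in the various cited propositions compose coherently so that a single sequence of normalizations places an arbitrary $\BSHM(n,\ell,a,b)$ into precisely one table column. The subtlety is that \cref{Prop:SRG1} and \cref{Prop:SRG2} each perform their own normalizations internally, so I must check that the freedoms claimed in the theorem statement suffice to reach every listed representative without conflict — in effect, that interchanging $a,b$ to fix the sign convention does not undo the switching used to fix $\ell \le \frac{n}{2}$, since the switching map $(\ell,a,b) \mapsto (n-\ell,-a,-b)$ and the interchange $(a,b) \mapsto (b,a)$ interact. Once that compatibility is confirmed, the proof is complete by direct appeal to the enumerated propositions.
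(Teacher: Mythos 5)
Your overall route is the same as the paper's: the same case split ($b=-a$; then $b\ne -a$ split into imprimitive and primitive, and within those Type~1 versus Type~2), the same normalizations, and the same citations to \cref{Prop:SRG1}, \cref{Prop:SRG2}, \cref{Prop:imprimitiveb=-a}, \cref{Prop:imprimitivebne-a} and \cref{Cor:mod4}. But there is a concrete gap in the primitive $b \ne -a$ columns. You say these columns ``simply restate'' the data of \cref{Prop:SRG2}, and you quote the parameter relations in product form, $n(\ell+ab)=(\ell-a)(\ell-b)$ and $n(\ell+ab-a-b)=(\ell-a)(\ell-b)$. \cref{Tab:summary}, however, asserts the quotient forms $n=\frac{(\ell-a)(\ell-b)}{\ell+ab}$ and $n=\frac{(\ell-a)(\ell-b)}{\ell+ab-a-b}$, and the passage from product to quotient is not automatic: one must show the denominators are nonzero. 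This is exactly the one step in the paper's treatment of the primitive columns that is an argument rather than a citation, and it is where primitivity is actually used. If $\ell+ab=0$, then (A2) forces $(\ell-a)(\ell-b)=0$; with the normalization $a>0\ge b$ and $\ell>2$ this gives $a=\ell$ and hence $(a,b)=(\ell,-1)$, whence $\mu=\frac{nb(b+1)}{(b-a)^2}=0$ by (A4) and $H$ is imprimitive, a contradiction. Likewise for Type~2: $\ell+ab-a-b=0$ forces $(a,b)=(\ell,0)$ and then $\mu=0$ by (B4). Without this exclusion the primitive columns are not established; indeed the degenerate pairs $(a,b)=(\ell,-1)$ and $(\ell,0)$ are precisely the imprimitive families, so this exclusion is what separates the primitive columns from the imprimitive ones.

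A second, smaller omission of the same kind sits in the $b=-a$ column: the table claims $n=\frac{\ell^2-a^2}{\ell-a^2}$ and $n>2\ell$, neither of which is literally contained in \cref{Prop:SRG1}. One needs $\ell \ne a^2$ in order to divide (if $\ell=a^2$, then \cref{Prop:SRG1}~(1) gives $a^2(a^2-1)=0$, impossible since $a$ is even and $\ell>2$), and one needs $\ell \ne \frac{n}{2}$ to upgrade the inequality $\ell \le \frac{n}{2}$ obtained from switching to the strict inequality $n>2\ell$: if $\ell=\frac{n}{2}$, then $\frac{\ell}{a}=2\cdot\frac{n}{4a}$ would be even, contradicting \cref{Prop:SRG1}~(2). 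These are short checks, but they are part of what the theorem (via the table) asserts, so a complete proof must include them.
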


\begin{proof}
Distinguish the cases $b=-a$ and $b \ne -a$.
\begin{description}
\item[Case $\boldsymbol{b = -a}$]
Apply the switching transformation \eqref{Eq:switching} if necessary so that $\ell \le \frac{n}{2}$, and interchange $a$ and $b$ if necessary so that $a > 0$ (noting that $a=0$ is excluded by \cref{Prop:SRG1}~(1)).
Then $a$ is even and $\frac{\ell}{a}$ is an odd integer and $\frac{n}{4a}$ is an integer, by \cref{Prop:SRG1}~(2).
This implies that $\ell \equiv a \pmod{4}$ and $\ell \ne \frac{n}{2}$.
\cref{Prop:SRG1}~(1) shows that $\ell \ne a^2$, and so
$n = \frac{\ell^2-a^2}{\ell-a^2}$.
\cref{Prop:imprimitiveb=-a} shows that, up to negation of columns, $H$ is primitive.
This establishes the conditions for the first column of \cref{Tab:summary}.

\item[Case $\boldsymbol{b \ne -a}$]
Distinguish the cases $H$ is imprimitive and $H$ is primitive.

\begin{description}
\item[Case $H$ is imprimitive]
\cref{Prop:imprimitivebne-a} shows that the conditions are satisfied for the second and fourth columns of \cref{Tab:summary}. 

\item[Case $H$ is primitive]
Since $ab \le 0$ by \cref{Prop:SRG2}~(A2) and (B2), we may interchange $a$ and~$b$ if necessary so that $a > 0 \ge b$ (noting that $a = b =0$ is excluded by \cref{Res:KS}).
Apply the switching transformation \eqref{Eq:switching} if necessary so that either $\ell < \frac{n}{2}$ and $H$ has Type 1, or else $\ell \le \frac{n}{2}$ and $H$ has Type~2.

\begin{description}
\item[Case $\ell < \frac{n}{2}$ and $H$ has Type 1]
\cref{Prop:SRG2}~(A2) shows that $\ell +ab \ne 0$, otherwise $(a,b) = (\ell,-1)$ and then $\mu=0$ by \cref{Prop:SRG2}~(A4) and so $H$ is imprimitive.
Therefore
$n = \frac{(\ell-a)(\ell-b)}{\ell+ab}$ from \cref{Prop:SRG2}~(A2), and
$\frac{\ell-b}{b-a}$ and $\frac{n}{b-a}$ and $\frac{n(b+1)}{2(b-a)}$ and $\frac{nb(b+1)}{(b-a)^2}$ are integers from \cref{Prop:SRG2}~(A3).
\cref{Cor:mod4} shows that $\ell \equiv a \equiv b \pmod{4}$.
This establishes the conditions for the third column of \cref{Tab:summary}.

\item[Case $\ell \le \frac{n}{2}$ and $H$ has Type~2]
\cref{Prop:SRG2}~(B2) shows that $\ell +ab -a-b\ne 0$, otherwise $(a,b) = (\ell,0)$ and then $\mu=0$ by \cref{Prop:SRG2}~(B4) and so $H$ is imprimitive.
Therefore
$n = \frac{(\ell-a)(\ell-b)}{\ell+ab-a-b}$ from \cref{Prop:SRG2}~(B2), and
$\frac{\ell-b}{b-a}$ and $\frac{n}{b-a}$ and $\frac{n(b-1)}{2(b-a)}$ and $\frac{nb(b-1)}{(b-a)^2}$ are integers from \cref{Prop:SRG2}~(B3).
\cref{Cor:mod4} shows that $\ell \equiv a \equiv b \pmod{4}$.
This establishes the conditions for the fifth column of \cref{Tab:summary}.
\qedhere
\end{description}

\end{description}

\end{description}
\end{proof}

\end{subsection}

\end{section}

\begin{section}{Open primitive cases}\label{Sec:primitive-open}
In this section, we tabulate parameter sets $(n,\ell,a,b)$ for small $n$ for which the existence of a nontrivial primitive $\BSHM(n,\ell,a,b)$ is not determined by the results presented so far. We indicate which of these open cases will be settled by the results of \cref{Sec:primitive}.

 From \cref{Tab:summary}, there are three possibilities for a nontrivial primitive $\BSHM(n,\ell,a,b)$ $H$: the case $b=-a$ (first column), the case $b\ne-a$ where $H$ has Type~1 (third column), and the case $b\ne-a$ where $H$ has Type~2 (the fifth column).
In each of these cases, there is at most one possible value of $n$ for each triple $(\ell,a,b)$; this contrasts with the imprimitive cases in which infinitely many values of $n$ are possible.

The following procedures identify all parameter sets for which the existence of a nontrivial primitive $\BSHM(n,\ell,a,b)$ $H$ is not ruled out by the constraints presented in \cref{Tab:summary}.

\begin{description}
\item[Case $\boldsymbol{b=-a}$]
Fix an even value of $\ell > 1$.
For each $a$ for which $0 < a < \sqrt{\ell}$ and $a \equiv \ell \pmod{4}$
and $\frac{\ell}{a}$ is an odd integer,
calculate $n = \frac{\ell^2-a^2}{\ell-a^2}$. Retain the parameter values $(n,\ell,a,-a)$ provided that $\frac{n}{4a}$ is an integer and $n > 2 \ell$.

The 16 parameter sets $(n,\ell,a,-a)$ with $n \le 1296$ retained under this procedure are displayed in \cref{Tab:surviveb=-a}. 
Existence for five of these parameter sets is given by \cref{Res:Equi}. 
Nonexistence for the parameter set $(96,20,4,-4)$ is given by the nonexistence of the associated strongly regular graph in \cref{Prop:SRG1}~(3) (as well as that in \cref{Prop:SRG1}~(4))~\cite{Brouwer}.
Existence for the remaining ten parameter sets remains open. 

\begin{table}[ht!]
\caption{Parameter sets $(n,\ell,a,-a)$ satisfying the constraints in \cref{Tab:summary} for a nontrivial $\BSHM(n,\ell,a,-a)$ with $n \le 1296$,
and the parameter set $(v,k,\lambda,\mu)$ of the associated strongly regular graph under each of the transformations of \cref{Prop:SRG1}~(3) and~(4).}
\begin{center}
\small
\begin{tabular}{|c|c|c|c|c|} 		
\hline
$(n,\ell,a,-a)$		& $(v,k,\lambda,\mu)$ 	& $(v,k,\lambda,\mu)$ 	& BSHM exists? 	& reason		\\ 
			& under Prop.~\ref{Prop:SRG1}~(3)			
						& under Prop.~\ref{Prop:SRG1}~(4)	
									&		&			\\ \hline
$(16,6,2,-2)$        	& $(16,6,2,2)$		& $(16,10,6,6)$		& yes		& \cref{Res:Equi}	\\ 
$(64,28,4,-4)$       	& $(64,28,12,12)$	& $(64,36,20,20)$	& yes		& \cref{Res:Equi}	\\ 
$(96,20,4,-4)$       	& $(96,45,24,18)$ 	& $(96,57,36,30)$	& no		& \cite{Brouwer} 	\\ 
$(144,66,6,-6)$      	& $(144,66,30,30)$	& $(144,78,42,42)$	& open  	&			\\ 
$(256,120,8,-8)$     	& $(256,120,56,56)$	& $(256,136,72,72)$	& yes 		& \cref{Res:Equi}  	\\ 
$(288,42,6,-6)$      	& $(288,140,76,60)$	& $(288,164,100,84)$	& open  	&			\\ 
$(320,88,8,-8)$      	& $(320,154,78,70)$	& $(320,174,98,90)$	& open  	&			\\ 
$(400,190,10,-10)$   	& $(400,190,90,90)$	& $(400,210,110,110)$	& open  	&			\\ 
$(560,130,10,-10)$   	& $(560,273,140,126)$	& $(560,301,168,154)$	& open  	&			\\ 
$(576,276,12,-12)$   	& $(576,276,132,132)$	& $(576,300,156,156)$	& yes		& \cref{Res:Equi}  	\\ 
$(640,72,8,-8)$      	& $(640,315,170,140)$	& $(640,355,210,180)$	& open  	&			\\ 
$(784,378,14,-14)$   	& $(784,378,182,182)$	& $(784,406,210,210)$	& open  	&			\\ 
$(1008,266,14,-14)$  	& $(1008,494,250,234)$	& $(1008,530,286,270)$	& open  	&			\\ 
$(1024,496,16,-16)$  	& $(1024,496,240,240)$	& $(1024,528,272,272)$	& yes		& \cref{Res:Equi}  	\\ 
$(1200,110,10,-10)$  	& $(1200,594,318,270)$	& $(1200,654,378,330)$	& open  	&			\\ 
$(1296,630,18,-18)$  	& $(1296,630,306,306)$	& $(1296,666,342,342)$	& open  	&			\\ \hline
\end{tabular}
\end{center}

\label{Tab:surviveb=-a}
\end{table}

\item[Case $\boldsymbol{b \ne -a}$ where $H$ has Type 1]
Fix a value of $\ell > 2$.
For each $a, b$ for which $\ell \ge a > 0 \ge b \ge -\ell$ and $b \ne -a$ and $a \equiv b \equiv \ell \pmod{4}$
 and $ab > -\ell$ and $\frac{\ell-b}{b-a}$ is an integer,
calculate $n = \frac{(\ell-a)(\ell-b)}{\ell+ab}$.
Retain the parameter values $(n,\ell,a,b)$ provided that $\frac{n}{b-a}$ and $\frac{n(b+1)}{2(b-a)}$ and $\frac{nb(b+1)}{(b-a)^2}$ are integers and $n > 2\ell$,
and provided that the existence of the associated strongly regular graph $G$ is not ruled out in Brouwer's tables of parameters of strongly regular graphs~\cite{Brouwer},
and provided that the parameters $\overline{\lambda},\overline{\mu}$ given by \eqref{eqn:oGparams} for the complementary graph are both non-negative.

The 30 parameter sets $(n,\ell,a,b)$ with $n \le 256$ obtained via this procedure are displayed in \cref{Tab:survivebne-aType1}.
Nonexistence for the parameter set $(96,19,3,-5)$ is given by the nonexistence of the associated strongly regular graph.
Existence for 16 of the 29 remaining parameter sets will be demonstrated in \cref{Sec:primitive}.

\begin{table}[ht!]
\caption{Parameter sets $(n,\ell,a,b)$ satisfying the constraints in \cref{Tab:summary} for a nontrivial primitive $\BSHM(n,\ell,a,b)$ of Type 1 with $b \ne -a$ and $n \le 256$, and the parameters $(v,k,\lambda,\mu)$ of the associated strongly regular graph.}
\begin{center}
\normalsize
\begin{tabular}{|c|c|c|c|} 		
\hline
$(n,\ell,a,b)$		& $(v,k,\lambda,\mu)$ 	& BSHM exists? 	& reason			\\ \hline
$(16,5,1,-3)$  		& $(16,10,6,6)$		& yes		& \cref{Cor:Had-add-subtract} 	\\ \hline
$(64,14,6,-2)$ 		& $(64,14,6,2)$		& yes		& \cref{Cor:PDS}~$(i)$  	\\
$(64,18,2,-6)$ 		& $(64,45,32,30)$	& yes		& \cref{Cor:PDS}~$(ii)$		\\
$(64,21,5,-3)$ 		& $(64,21,8,6)$		& yes		& \cref{Cor:PDS}~$(i)$  	\\
$(64,27,3,-5)$ 		& $(64,36,20,20)$	& yes		& \cref{Cor:Had-add-subtract} 	\\ \hline
$(96,19,3,-5)$ 		& $(96,57,36,30)$ 	& no		& \cite{Brouwer}		\\
$(96,38,2,-10)$		& $(96,76,60,60)$	& open		&		  		\\
$(96,45,9,-3)$ 		& $(96,20,4,4)$		& open		&				\\ \hline
$(120,51,3,-9)$		& $(120,85,60,60)$	& open		&				\\
$(120,56,8,-4)$		& $(120,35,10,10)$	& open		&				\\ \hline 
$(144,22,10,-2)$	& $(144,22,10,2)$	& open		&		  		\\ 
$(144,33,9,-3)$		& $(144,33,12,6)$	& open		&		  		\\ 
$(144,39,3,-9)$		& $(144,104,76,72)$	& open		&		  		\\ 
$(144,44,8,-4)$		& $(144,44,16,12)$	& open		&		  		\\
$(144,52,4,-8)$		& $(144,91,58,56)$	& open		&		  		\\
$(144,55,7,-5)$		& $(144,55,22,20)$	& open		&		  		\\
$(144,65,5,-7)$		& $(144,78,42,42)$	& open		&		  		\\ \hline
$(216,40,4,-8)$		& $(216,140,94,84)$	& open		& 				\\ 
$(216,43,7,-5)$		& $(216,86,40,30)$	& open		& 				\\ \hline       
$(256,30,14,-2)$	& $(256,30,14,2)$	& yes		& \cref{Cor:PDS}~$(i)$		\\
$(256,45,13,-3)$	& $(256,45,16,6)$	& yes		& \cref{Cor:PDS}~$(i)$		\\
$(256,51,3,-13)$	& $(256,204,164,156)$	& yes		& \cref{Cor:PDS}~$(v)$		\\
$(256,60,12,-4)$	& $(256,60,20,12)$	& yes		& \cref{Cor:PDS}~$(i)$		\\
$(256,68,4,-12)$	& $(256,187,138,132)$	& yes		& \cref{Cor:PDS}~$(iii)$	\\
$(256,75,11,-5)$	& $(256,75,26,20)$	& yes		& \cref{Cor:PDS}~$(i)$		\\
$(256,85,5,-11)$	& $(256,170,114,110)$	& yes		& \cref{Cor:PDS}~$(v)$		\\
$(256,90,10,-6)$	& $(256,90,34,30)$	& yes		& \cref{Cor:PDS}~$(i)$		\\
$(256,102,6,-10)$	& $(256,153,92,90)$	& yes		& \cref{Cor:PDS}~$(v)$		\\
$(256,105,9,-7)$	& $(256,105,44,42)$	& yes		& \cref{Cor:PDS}~$(i)$		\\
$(256,119,7,-9)$	& $(256,136,72,72)$	& yes		& \cref{Cor:Had-add-subtract}	\\ \hline
\end{tabular}
\end{center}
\normalsize
\label{Tab:survivebne-aType1}
\end{table}

\item[Case $\boldsymbol{b \ne -a}$ where $H$ has Type 2]
Fix a value of $\ell > 2$.
For each $a, b$ for which $\ell \ge a > 0 \ge b \ge -\ell$ and $b \ne -a$ and $a \equiv b \equiv \ell \pmod{4}$ and $ab-a-b > -\ell$ and $\frac{\ell-b}{b-a}$ is an integer,
calculate $n = \frac{(\ell-a)(\ell-b)}{\ell+ab-a-b}$.
Retain the parameter values $(n,\ell,a,b)$ provided that $\frac{n}{b-a}$ and $\frac{n(b-1)}{2(b-a)}$ and $\frac{nb(b-1)}{(b-a)^2}$ are integers and $n \ge 2\ell$,
and provided that the existence of the associated strongly regular graph $G$ is not ruled out by~\cite{Brouwer},
and provided that the parameters $\overline{\lambda},\overline{\mu}$ given by \eqref{eqn:oGparams} for the complementary graph are both non-negative.

The 30 parameter sets $(n,\ell,a,b)$ with $n \le 256$ obtained via this procedure are displayed in \cref{Tab:survivebne-aType2}. The apparent relationship between the entries in \cref{Tab:survivebne-aType1,Tab:survivebne-aType2} will be explained in \cref{Prop:add-subtract}.
Nonexistence for the parameter set $(96,21,5,-3)$ is given by the nonexistence of the associated strongly regular graph.
Existence for 16 of the 29 remaining parameter sets will be demonstrated in \cref{Sec:primitive}.

\begin{table}[ht!]
\caption{Parameter sets $(n,\ell,a,b)$ satisfying the constraints in \cref{Tab:summary} for a nontrivial primitive $\BSHM(n,\ell,a,b)$ of Type 2 with $b \ne -a$ and $n \le 256$, and the parameters $(v,k,\lambda,\mu)$ of the associated strongly regular graph.}
\begin{center}
\begin{tabular}{|c|c|c|c|} 		
\hline
$(n,\ell,a,b)$		& $(v,k,\lambda,\mu)$ 	& BSHM exists? 	& reason			\\ \hline
$(16,7,3,-1)$  		& $(16,6,2,2)$		& yes		& \cref{Cor:Had-add-subtract}  	\\ \hline
$(64,15,7,-1)$ 		& $(64,14,6,2)$		& yes		& \cref{Cor:PDS}$(i)$  		\\
$(64,19,3,-5)$ 		& $(64,45,32,30)$	& yes		& \cref{Cor:PDS}$(ii)$ 		\\
$(64,22,6,-2)$ 		& $(64,21,8,6)$		& yes		& \cref{Cor:PDS}$(i)$  		\\
$(64,29,5,-3)$ 		& $(64,28,12,12)$	& yes		& \cref{Cor:Had-add-subtract}  	\\ \hline
$(96,21,5,-3)$		& $(96,45,24,18)$ 	& no		& \cite{Brouwer} 		\\ 
$(96,39,3,-9)$ 		& $(96,76,60,60)$		& open		&				\\ 
$(96,46,10,-2)$		& $(96,20,4,4)$		& open		&				\\ \hline
$(120,52,4,-8)$		& $(120,85,60,60)$	& open		&				\\ 
$(120,57,9,-3)$		& $(120,35,10,10)$	& open		&				\\ \hline
$(144,23,11,-1)$	& $(144,22,10,2)$	& open		&				\\
$(144,34,10,-2)$	& $(144,33,12,6)$	& open		&				\\
$(144,40,4,-8)$		& $(144,104,76,72)$	& open		&				\\
$(144,45,9,-3)$		& $(144,44,16,12)$	& open		&				\\
$(144,53,5,-7)$		& $(144,91,58,56)$	& open		&				\\
$(144,56,8,-4)$		& $(144,55,22,20)$	& open		&				\\
$(144,67,7,-5)$		& $(144,66,30,30)$	& open		&				\\ \hline
$(216,41,5,-7)$		& $(216,140,94,84)$	& open		& 				\\
$(216,44,8,-4)$		& $(216,86,40,30)$	& open		& 				\\ \hline
$(256,31,15,-1)$	& $(256,30,14,2)$	& yes		& \cref{Cor:PDS}$(i)$		\\
$(256,46,14,-2)$	& $(256,45,16,6)$	& yes		& \cref{Cor:PDS}$(i)$		\\
$(256,52,4,-12)$	& $(256,204,164,156)$	& yes		& \cref{Cor:PDS}$(v)$		\\
$(256,61,13,-3)$	& $(256,60,20,12)$	& yes		& \cref{Cor:PDS}$(i)$		\\
$(256,69,5,-11)$	& $(256,187,138,132)$	& yes		& \cref{Cor:PDS}$(iii)$		\\
$(256,76,12,-4)$	& $(256,75,26,20)$	& yes		& \cref{Cor:PDS}$(i)$		\\
$(256,86,6,-10)$	& $(256,170,114,110)$	& yes		& \cref{Cor:PDS}$(v)$		\\
$(256,91,11,-5)$	& $(256,90,34,30)$	& yes		& \cref{Cor:PDS}$(i)$		\\
$(256,103,7,-9)$	& $(256,153,92,90)$	& yes		& \cref{Cor:PDS}$(v)$		\\
$(256,106,10,-6)$	& $(256,105,44,42)$	& yes		& \cref{Cor:PDS}$(i)$		\\
$(256,121,9,-7)$	& $(256,120,56,56)$	& yes		& \cref{Cor:Had-add-subtract}	\\ \hline
\end{tabular}
\end{center}
\label{Tab:survivebne-aType2}
\end{table}

\end{description}

\end{section}

\begin{section}{Constructions for the primitive case}\label{Sec:primitive}
In this section, we develop the study of primitive balanced splittable Hadamard matrices.
In \cref{Subsec:all-ones}, we show that we can regard a balanced splittable Hadamard matrix $H$ as having both parameter sets $(n,\ell,a,b)$ and $(n,\ell+1,a+1,b+1)$, provided that $H$ contains the all-ones row. 
In \cref{Subsec:pds}, we use partial difference sets in elementary abelian $2$-groups to construct new infinite families of primitive balanced splittable Hadamard matrices. 

\begin{subsection}{Inclusion or exclusion of the all-ones row}\label{Subsec:all-ones}

Inspection of the open parameter sets $(n,\ell,a,b)$ and $(v,k,\lambda,\mu)$ appearing in \cref{Tab:surviveb=-a,Tab:survivebne-aType1,Tab:survivebne-aType2} reveals some relationships.
Each parameter set $(n,\ell,a,-a)$ in \cref{Tab:surviveb=-a} has two associated 
strongly regular graph parameter sets $(v,k_1,\lambda_1,\mu_1)$ and $(v,k_2,\lambda_2,\mu_2)$. 
For $n \le 256$, there are 
corresponding parameter sets $(n,\ell-1,a-1,-a-1)$ and $(v,k_2,\lambda_2,\mu_2)$ in \cref{Tab:survivebne-aType1}, and
corresponding parameter sets $(n,\ell+1,a+1,-a+1)$ and $(v,k_1,\lambda_1,\mu_1)$ in \cref{Tab:survivebne-aType2}.
Furthermore, all other entries $(n,\ell,a,b)$ and $(v,k,\lambda,\mu)$ in \cref{Tab:survivebne-aType1} are in one-to-one correspondence with entries $(n,\ell+1,a+1,b+1)$ and $(v,k,\lambda,\mu)$ in \cref{Tab:survivebne-aType2}.
These apparent relationships are all explained by \cref{Prop:add-subtract}, based on the following remark.

\begin{remark}\label{Rem:addrow}
Let $H = \begin{pmatrix} H_1 \\ \vo^T \\ H_2 \end{pmatrix}$, where $H_1$ and $H_2$ each contain at least one row. 
Then the following are equivalent:
\begin{enumerate}[$(i)$]
\item $H$ is a $\BSHM(n,\ell,a,b)$ with respect to $H_1$ and has associated graph~$G$
\item $H$ is a $\BSHM(n,\ell+1,a+1,b+1)$ with respect to $\begin{pmatrix} H_1 \\ \vo^T \end{pmatrix}$ and has associated graph~$G$.
\end{enumerate}
\end{remark}

\begin{proposition} \label{Prop:add-subtract}
\mbox{}
\begin{enumerate}[$(i)$]
\item
Suppose there exists a $\BSHM(n,\ell,a,-a)$, where $2 < \ell < n-2$.
Then there exists a Type 1 $\BSHM(n,\ell-1,a-1,-a-1)$ containing the all-ones row and having the same associated graph as some $\BSHM(n,\ell,a,-a)$,
and there exists a Type 2 $\BSHM(n,\ell+1,a+1,-a+1)$ containing the all-ones row and having the same associated graph as some $\BSHM(n,\ell,a,-a)$.

\item
Let $1 < \ell < n-2$ and $b \not \in \{-a, -a-2\}$.
Then there exists a Type 1 $\BSHM(n,\ell,a,b)$ containing the all-ones row and having associated graph~$G$
if and only if
there exists a Type 2 $\BSHM(n,\ell+1,a+1,b+1)$ containing the all-ones row and having associated graph~$G$.
\end{enumerate}
\end{proposition}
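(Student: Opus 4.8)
The engine for both parts is \cref{Rem:addrow}, which moves the all-ones row $\vo^T$ across the boundary between the distinguished submatrix and its complement, thereby shifting the triple $(\ell,a,b)$ to $(\ell+1,a+1,b+1)$ while keeping the associated graph fixed. The only genuine work is to locate $\vo^T$ relative to the submatrix and then to identify the Type of the matrix produced. For the latter I would repeatedly use the elementary observation that if a Hadamard matrix has $\vo^T$ among its rows, then every \emph{other} row is orthogonal to $\vo$ and so has zero dot product with $\vo$; this pins down which block has vanishing row sums.

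For part $(ii)$, consider the forward implication and suppose $H$ is a Type~1 $\BSHM(n,\ell,a,b)$ with respect to a submatrix $H_1$, containing $\vo^T$. By \cref{Prop:SRG2}~(A1) we have $H_1\vo=\vz$, so $\vo^T$ cannot be a row of $H_1$ and must lie in the complement; after a row permutation I write $H=\begin{pmatrix} H_1 \\ \vo^T \\ H_2' \end{pmatrix}$, where $H_1$ is nonempty since $\ell>1$ and $H_2'$ is nonempty since $\ell<n-2$. Then \cref{Rem:addrow} yields a $\BSHM(n,\ell+1,a+1,b+1)$ with respect to $\begin{pmatrix} H_1 \\ \vo^T \end{pmatrix}$ with the same graph. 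By the observation above $H_2'\vo=\vz$, while the new submatrix contains $\vo^T$ and so does not have vanishing row sums; hence it is Type~2. The reverse implication is symmetric, using \cref{Prop:SRG2}~(B1) to place $\vo^T$ inside the distinguished submatrix and applying \cref{Rem:addrow} in the direction that removes it. The hypotheses $b\ne-a$ (so the given matrix has a Type) and $b\ne-a-2$ (so $b+1\ne-(a+1)$ and the produced matrix has a Type) are exactly what \cref{Prop:SRG2} needs to supply the dichotomy at both ends.

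Part $(i)$ treats $b=-a$, where the input has no Type but both outputs do, since $-a-1\ne-(a-1)$ and $-a+1\ne-(a+1)$. Here I would first use the two column-negated normal forms of \cref{Prop:SRG1} to position the all-ones row. Starting from $H'=\begin{pmatrix} H'_1 \\ \vo^T \\ H'_2 \end{pmatrix}$ of \cref{Prop:SRG1}~(3), in which $\vo^T$ lies in the complement of the $\ell\times n$ submatrix $H'_1$, \cref{Rem:addrow} produces a $\BSHM(n,\ell+1,a+1,-a+1)$ with respect to $\begin{pmatrix} H'_1 \\ \vo^T \end{pmatrix}$ whose complement $H'_2$ has vanishing row sums and whose submatrix contains $\vo^T$, hence Type~2. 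Starting instead from $H''=\begin{pmatrix} H''_1 \\ \vo^T \\ H''_2 \end{pmatrix}$ of \cref{Prop:SRG1}~(4), in which $\vo^T$ lies \emph{inside} the distinguished submatrix $\begin{pmatrix} H''_1 \\ \vo^T \end{pmatrix}$, applying \cref{Rem:addrow} to remove $\vo^T$ produces a $\BSHM(n,\ell-1,a-1,-a-1)$ with respect to $H''_1$; since $\vo^T$ is a row of $H''$, the observation gives $H''_1\vo=\vz$, so this matrix is Type~1. In both cases \cref{Rem:addrow} preserves the associated graph, and $H'$, $H''$ are themselves $\BSHM(n,\ell,a,-a)$, which accounts for the phrase ``the same associated graph as some $\BSHM(n,\ell,a,-a)$''; the bounds $2<\ell<n-2$ guarantee nonemptiness of all blocks and nontriviality of the outputs.

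The main obstacle is not computational but the bookkeeping of the Type. The subtlety is that ``Type'' is defined, via \cref{Prop:SRG2}, only relative to a chosen submatrix and only when the two off-diagonal dot-product values are not negatives of one another; I must therefore check at each application that the produced parameters satisfy $b'\ne-a'$ and that the matrix is nontrivial, and then deduce the correct Type solely from which block has vanishing row sums. Once the ``all-ones row forces balanced rows'' observation is in hand, every Type determination reduces to inspecting whether the block in question contains $\vo^T$.
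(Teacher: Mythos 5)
Your proposal is correct and follows essentially the same route as the paper: locate the all-ones row via \cref{Prop:SRG1}~(3),(4) (for part $(i)$) or \cref{Prop:SRG2}~(A1),(B1) (for part $(ii)$), shift it across the submatrix boundary with \cref{Rem:addrow}, and read off the Type from which block has vanishing row sums, using the exclusivity of Types 1 and 2. Your explicit checks (nonemptiness of blocks, nontriviality of outputs, $b'\ne -a'$, and the remark explaining the phrase ``some $\BSHM(n,\ell,a,-a)$'' because the column negations in \cref{Prop:SRG1} can alter the associated graph) are exactly the points the paper's terser proof relies on implicitly.
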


\begin{proof}
\mbox{}
\begin{enumerate}[$(i)$]
\item
Let $H$ be a $\BSHM(n,\ell,a,-a)$.
By \cref{Prop:SRG1}~(4), $H$ can be transformed to a matrix $H'' = \begin{pmatrix}H''_1 \\ \vo^T \\ H''_2 \end{pmatrix}$ that is a $\BSHM(n,\ell,a,-a)$ with respect to $\begin{pmatrix} H''_1 \\ \vo^T \end{pmatrix}$;
let its associated graph be~$G''$.
Then $H''$ is a $\BSHM(n,\ell-1,a-1,-a-1)$ with respect to $H''_1$ and has associated graph $G''$ by \cref{Rem:addrow}, and $H''$ has Type~1 by \cref{Prop:SRG2}~(A1).

By \cref{Prop:SRG1}~(3), $H$ can instead be transformed to a matrix $H' = \begin{pmatrix}H'_1 \\ \vo^T \\ H'_2 \end{pmatrix}$ that is a $\BSHM(n,\ell,a,-a)$ with respect to~$H'_1$; let its associated graph be~$G'$.
Then $H'$ is a $\BSHM(n,\ell+1,a+1,-a+1)$ with respect to $\begin{pmatrix} H'_1 \\ \vo^T \end{pmatrix}$ and has associated graph $G'$ by \cref{Rem:addrow}, and $H'$ has Type~2 by \cref{Prop:SRG2}~(B1).

\item
Let $H = \begin{pmatrix} H_1 \\ H_2 \end{pmatrix}$ be a Type~1 $\BSHM(n,\ell,a,b)$ with respect to $H_1$ containing the all-ones row~$\vo^T$ and having associated graph~$G$.
By \cref{Prop:SRG2}~(A1), the all-ones row is contained in~$H_2$ and so we can write 
$H= \begin{pmatrix} H_1\\ \vo^T \\ H_2' \end{pmatrix}$.
Then $H$ is a $\BSHM(n,\ell+1,a+1,b+1)$ with respect to $\begin{pmatrix} H_1\\ \vo^T \end{pmatrix}$ and has associated graph~$G$
by \cref{Rem:addrow}, 
and $H$ has Type~2 by \cref{Prop:SRG2}~(B1) (noting that $b+1 \ne -(a+1)$ by assumption).

The converse is similar.
\qedhere
\end{enumerate}
\end{proof}

\begin{remark}
\mbox{}
\begin{enumerate}[$(i)$]
\item
\cref{Prop:add-subtract}~$(i)$ extends \cite[Remark~2.8~(2)]{KS}.

\item
The equivalence described in \cref{Prop:add-subtract}~$(ii)$ relies on the presence of the all-ones row. In general, one cannot assume that a $\BSHM(n,\ell,a,b)$ contains the all-ones row.
\cref{Prop:add-subtract}~$(i)$ describes an exception in the case $b=-a$, when negation of a column of the matrix leaves all column dot products of the upper $\ell \times n$ submatrix in $\{-a,a\}$ (see \cref{Prop:SRG1}). 
\cref{Prop:0to-1} will describe a further exception in the case $(n,\ell,a,b) = (4rs,4s,4s,0)$.
\cref{Rem:skew-type} describes an example of a result that holds in the presence of the all-ones row but otherwise fails.
\end{enumerate}
\end{remark}

Application of \cref{Prop:add-subtract}~$(i)$ to \cref{Res:Equi} gives the following result.
\begin{corollary}\label{Cor:Had-add-subtract}
Suppose $n>1$ is the order of a Hadamard matrix. Then there exists
a Type 1 $\BSHM(4n^2,2n^2-n-1,n-1,-n-1)$ and 
a Type 2 $\BSHM(4n^2,2n^2-n+1,n+1,-n+1)$.
\end{corollary}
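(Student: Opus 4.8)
The plan is to obtain both matrices by feeding the family from \cref{Res:Equi} into \cref{Prop:add-subtract}~$(i)$. \cref{Res:Equi} guarantees that a Hadamard matrix of order $n$ yields a $\BSHM(4n^2,2n^2-n,n,-n)$, which is precisely an instance of the case $b=-a$ with order $N := 4n^2$, submatrix size $\ell := 2n^2-n$, and parameter $a := n$. Since \cref{Prop:add-subtract}~$(i)$ takes a $\BSHM(N,\ell,a,-a)$ and produces a Type~1 $\BSHM(N,\ell-1,a-1,-a-1)$ and a Type~2 $\BSHM(N,\ell+1,a+1,-a+1)$, substituting these values should give exactly the two claimed matrices: Type~1 with parameters $(4n^2,\,2n^2-n-1,\,n-1,\,-n-1)$ and Type~2 with parameters $(4n^2,\,2n^2-n+1,\,n+1,\,-n+1)$.

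First I would invoke \cref{Res:Equi} to produce the $\BSHM(4n^2,2n^2-n,n,-n)$. Next I would verify the single hypothesis required by \cref{Prop:add-subtract}~$(i)$, namely $2 < \ell < N-2$, i.e. $2 < 2n^2-n < 4n^2-2$. Recalling that a Hadamard matrix of order $n>1$ forces $n \ge 2$, the left inequality $2n^2-n>2$ and the right inequality $2n^2-n < 4n^2-2$ (equivalently $2n^2+n-2>0$) both hold for all $n \ge 2$, so the hypothesis is met. Finally I would apply \cref{Prop:add-subtract}~$(i)$ and simply read off the two output parameter sets, checking that $-a-1 = -n-1$ for the Type~1 case and $-a+1=-n+1$ for the Type~2 case, so that the parameters match the statement verbatim.

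I do not expect any genuine obstacle here: the corollary is a direct composition of two already-established results, and the only nontrivial point is confirming the inequality $2<2n^2-n<4n^2-2$ that licenses the use of \cref{Prop:add-subtract}~$(i)$. I would keep that verification brief, as it reduces to an elementary check valid for every $n \ge 2$.
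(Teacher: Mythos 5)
Your proposal is correct and is essentially identical to the paper's own proof, which consists of the single line ``Application of \cref{Prop:add-subtract}~$(i)$ to \cref{Res:Equi} gives the following result.'' Your explicit verification of the hypothesis $2 < 2n^2-n < 4n^2-2$ for $n \ge 2$ is a nice touch that the paper leaves implicit.
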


\end{subsection}

\begin{subsection}{Constructions from partial difference sets}\label{Subsec:pds}

In this subsection, we use partial difference sets in elementary abelian $2$-groups to construct
several new infinite families of primitive balanced splittable Hadamard matrices.
In particular, we determine for each parameter set with $n \in \{64,256\}$
whether a nontrivial primitive $\BSHM(n,\ell,a,b)$ exists.
We then use packings of partial difference sets to produce infinite families of Hadamard matrices that have the balanced splittable property with respect to multiple disjoint submatrices simultaneously.

We begin by defining a partial difference set and reviewing its basic properties.
For a subset $D$ of a multiplicative group, we write $D^{(-1)}=\{ d^{-1} \mid d \in D \}$. 

\begin{definition}\label{def-pds}
Let $D$ be an $\ell$-subset of a multiplicative group $G$ of order~$v$. The subset $D$ is a $(v,\ell,\alpha,\beta)$ \emph{partial difference set} in $G$ if the multiset $\{xy^{-1} \mid x,y \in D, x \ne y \}$ contains each nonidentity element of $D$ exactly $\alpha$ times and each nonidentity element of $G \setminus D$ exactly $\beta$ times.
The partial difference set is \emph{regular} if $1_G \notin D$ and $D=D^{(-1)}$.
\end{definition}

The parameters of a partial difference set are usually written as $(v,k,\lambda,\mu)$, but we have instead written $(v,\ell,\alpha,\beta)$ to avoid confusion with the strongly regular graph parameters $(v,k,\lambda,\mu)$ used in \cref{Prop:SRG1,Prop:SRG2}.
The condition $D=D^{(-1)}$ in \cref{def-pds} is guaranteed to hold when $\alpha \ne \beta$ \cite[Prop.~1.2]{Ma}.
In that case, the condition $1_G \notin D$ is not restrictive: if $D$ is a partial difference set and $D=D^{(-1)}$ and $1_G \in D$, then $D \setminus \{1_G\}$ is a regular partial difference set \cite[p.~222]{Ma}.

Let $G$ be a finite abelian group. 
The \emph{exponent} $\exp(G)$ of $G$ is the smallest positive integer $n$ for which $g^n = 1_G$ for each $g \in G$.
A \emph{character} $\chi$ of $G$ is a group homomorphism from $G$ to the multiplicative group of the complex field~$\C$. 
The image of a character $\chi$ is therefore the multiplicative group of complex $(\exp(G))^\text{th}$ roots of unity.
The set of characters of $G$ forms a group $\wh{G}$ under the operation $(\chi_1 \circ \chi_2)(s) = \chi_1(s) \chi_2(s)$ for all $s \in G$,
and the groups $G$ and $\wh{G}$ are isomorphic.
We may therefore index the elements of $\wh{G}$ as $\{ \chi_g : g \in G\}$, and since there is an isomorphism mapping each $g$ to $\chi_g$ we have $\chi_g \circ \chi_h = \chi_{gh}$ for all $g, h \in G$. Therefore
$\chi_g(s)\chi_h(s) = \chi_{gh}(s)$ for all $g,h,s \in G$.
The \emph{principal character} $\chi_{1_G}$ of $G$ is the identity element of $\wh{G}$, satisfying $\chi_{1_G}(s)=1$ for each $s \in G$.
All other characters of $G$ are \emph{nonprincipal}.
For a subset $D$ of $G$ and $\chi \in \wh{G}$, the \emph{character sum} of $\chi$ on $D$ is $\chi(D)=\sum_{d \in D} \chi(d)$. 
See \cite[Section~2]{JL22+} for an introduction to the use of characters of finite abelian groups, and for example \cite[Chapter~1]{Pott} for a comprehensive treatment.

A partial difference set satisfying $D = D^{(-1)}$ can be characterized in terms of the values of its nonprincipal character sums.

\begin{proposition}[{\cite[Corollary~3.3]{Ma}}]\label{Prop:PDSchar}
Let $G$ be an abelian group of order $v$, and let $D$ be an $\ell$-subset of~$G$ satisfying $D = D^{(-1)}$.
Then the following are equivalent:
\begin{enumerate}[$(i)$]
\item
$D$ is a $(v,\ell,\alpha,\beta)$ partial difference set in $G$ 

\item
$\alpha,\beta$ are nonnegative integers satisfying $\ell^2=\gamma +(\alpha-\beta)\ell+\beta v$, where
\[ 
\gamma = \begin{cases} 
\ell-\beta & \mbox{if $1_G \not \in D$,} \\
\ell-\alpha & \mbox{if $1_G \in D$},
\end{cases}
\]
and
\[
\chi(D) = \tfrac{1}{2}\Big(\alpha-\beta \pm \sqrt{(\alpha-\beta)^2+4\gamma}\Big)
\quad \mbox{for all nonprincipal characters $\chi$ of $G$}.
\]
\end{enumerate}
\end{proposition}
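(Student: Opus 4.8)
The plan is to work in the group ring $\Z[G]$, identifying the subset $D$ with the element $\sum_{d \in D} d$ and writing $\underline{G} = \sum_{g \in G} g$ and $D^{(-1)} = \sum_{d \in D} d^{-1}$. The defining property of a partial difference set is then a single group ring identity: the coefficient of $1_G$ in $D D^{(-1)} = \sum_{x,y \in D} x y^{-1}$ equals $\ell$ (the pairs with $x = y$), and the coefficient of a nonidentity $g$ equals the number of representations $g = x y^{-1}$ with $x, y \in D$. Thus $(i)$ is equivalent to $D D^{(-1)} = \ell \cdot 1_G + \alpha D' + \beta (\underline{G} - 1_G - D')$, where $D'$ denotes the nonidentity part of $D$. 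Using the hypothesis $D = D^{(-1)}$ to replace $D D^{(-1)}$ by $D^2$, this reads $D^2 = \ell \cdot 1_G + \alpha D + \beta(\underline{G} - 1_G - D)$ when $1_G \notin D$ and $D^2 = \ell \cdot 1_G + \alpha(D - 1_G) + \beta(\underline{G} - D)$ when $1_G \in D$; in both cases the coefficient of $1_G$ after collecting terms is exactly the $\gamma$ of the statement.

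For the direction $(i) \Rightarrow (ii)$, I would apply each character $\chi \in \wh{G}$, extended to a ring homomorphism $\Z[G] \to \C$, to the relevant identity. Orthogonality gives $\chi(\underline{G}) = 0$ for every nonprincipal $\chi$, while $\chi_{1_G}(\underline{G}) = v$ and $\chi_{1_G}(D) = \ell$. The principal character then yields the scalar relation $\ell^2 = \gamma + (\alpha - \beta)\ell + \beta v$, and a nonprincipal character yields the quadratic $\chi(D)^2 - (\alpha-\beta)\chi(D) - \gamma = 0$, whose roots are exactly $\tfrac12\big(\alpha - \beta \pm \sqrt{(\alpha-\beta)^2 + 4\gamma}\big)$. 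The assumption $D = D^{(-1)}$ forces $\chi(D) = \overline{\chi(D)} \in \R$, so these roots are real and $\chi(D)$ must equal one of them; nonnegativity and integrality of $\alpha, \beta$ are inherited from the counting definition.

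The converse $(ii) \Rightarrow (i)$ carries the real content, and the key tool will be the injectivity of the map $\theta \mapsto (\chi(\theta))_{\chi \in \wh{G}}$ on $\C[G]$ (equivalently, invertibility of the character table). I would form the group ring element $\Delta = D^2 - (\alpha - \beta)D - \gamma \cdot 1_G - \beta \underline{G}$ and check that every character annihilates it: for nonprincipal $\chi$ this is immediate because $\chi(D)$ is a root of the quadratic, and for the principal character it is precisely the counting relation in $(ii)$. Injectivity then gives $\Delta = 0$, which is the group ring identity of the first paragraph; since $D$ is a genuine $\{0,1\}$-valued element, reading off coefficients recovers the partial difference set counts, with the assumed nonnegative integers $\alpha, \beta$ serving as the required multiplicities. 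I expect the main obstacle to be bookkeeping rather than depth: one must keep the two cases $1_G \in D$ and $1_G \notin D$ aligned so that the single constant $\gamma$ absorbs the identity contributions correctly, and must invoke Fourier inversion carefully to pass from equality of all character values back to equality in $\Z[G]$.
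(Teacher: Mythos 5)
Your proof is correct. There is no in-paper proof to compare it with: the paper quotes this proposition directly from \cite[Corollary~3.3]{Ma} and supplies no argument of its own. Your route---recasting condition $(i)$ as the group ring identity $D^2 = \gamma\,1_G + (\alpha-\beta)D + \beta\,\underline{G}$ in $\Z[G]$ (with the two cases $1_G \in D$ and $1_G \notin D$ both collapsing to the same constant $\gamma$), applying characters as ring homomorphisms to obtain the counting relation from the principal character and the quadratic satisfied by each nonprincipal $\chi(D)$, and invoking injectivity of $\theta \mapsto (\chi(\theta))_{\chi \in \wh{G}}$ on $\C[G]$ for the converse---is the standard character-theoretic proof of this classical result, and is essentially the argument underlying the cited source. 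The details all check out: the realness of $\chi(D)$ forced by $D = D^{(-1)}$ justifies identifying $\chi(D)$ with one of the two displayed roots, the coefficient of $1_G$ in the recovered identity is consistent ($\gamma + \beta = \ell$ or $\gamma + (\alpha-\beta) + \beta = \ell$ in the respective cases), and the nonidentity coefficients deliver exactly the multiplicities $\alpha$ and $\beta$ demanded by the definition of a partial difference set.
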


\cref{Prop:PDSchar} implies that the nonprincipal character sums $\chi(D)$ of a $(v,\ell,\alpha,\beta)$ partial difference set $D$ in $G$ satisfying $D = D^{(-1)}$ take the values $a$ and $b$ if and only if 
\begin{equation}\label{Eq:lambda-mu}
(\alpha,\,\beta) = \begin{cases}
(\ell+ab+a+b,\,\ell+ab) 	& \mbox{if $1_G \not \in D$} \\
(\ell+ab,\, \ell+ab-a-b) 	& \mbox{if $1_G \in D$} .
\end{cases}
\end{equation}
See the survey article \cite{Ma} for a detailed treatment of partial difference sets, and \cite{JL21} for some recent constructions.

\begin{definition}\label{Defn:C(D)}
Let $S$ be a subset of a finite abelian group $G$.
The \emph{character table $C(S)$ of $S$ in $G$} is the 
matrix with rows indexed by elements of~$S$ and columns indexed by elements of~$G$,
whose $(s,g)$ entry is $\chi_g(s)$ for $s \in S$ and $g \in G$.
\end{definition}

\begin{lemma} \label{Lemma:C(S)}
Let $S$ be a subset of a finite abelian group $G$, let $C(S)$ be the character table of $S$ in $G$, and let $g, h \in G$.
Then the dot product of the columns of $C(S)$ indexed by $g$ and $h$ is $\chi_{gh}(S)$.
\end{lemma}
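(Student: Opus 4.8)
The plan is to compute the dot product directly from \cref{Defn:C(D)} and then collapse it using the multiplicativity relation $\chi_g(s)\chi_h(s)=\chi_{gh}(s)$ recorded in the discussion preceding \cref{Defn:C(D)}. By that definition, the column of $C(S)$ indexed by $g$ is the vector $\big(\chi_g(s)\big)_{s \in S}$ and the column indexed by $h$ is $\big(\chi_h(s)\big)_{s \in S}$. Here the relevant bilinear form is the unconjugated dot product $\vu \cdot \vv = \sum_i u_i v_i$, consistent with its use elsewhere in the paper for the columns of a Hadamard matrix.

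First I would write the dot product of these two columns as $\sum_{s \in S} \chi_g(s)\chi_h(s)$. Then I would apply the relation $\chi_g(s)\chi_h(s)=\chi_{gh}(s)$, valid for all $g,h,s \in G$ because $g \mapsto \chi_g$ is a group isomorphism from $G$ onto $\wh{G}$, to rewrite this sum as $\sum_{s \in S}\chi_{gh}(s)$. Finally I would recognize this last expression as the character sum $\chi_{gh}(S)=\sum_{s \in S}\chi_{gh}(s)$ from the definition of $\chi(D)$, which is exactly the claimed value.

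The argument is essentially a one-line computation once the multiplicativity of characters is invoked, so I do not expect any real obstacle. The only point requiring a little care is to confirm that the dot product in the statement is the bilinear (unconjugated) form rather than the Hermitian inner product: in the application of interest $G$ is an elementary abelian $2$-group, so all character values lie in $\{1,-1\}$ and the distinction is immaterial, but the identity $\chi_g(s)\chi_h(s)=\chi_{gh}(s)$ already confirms that the unconjugated form is the natural choice for a general finite abelian group, since it is this form that yields the clean value $\chi_{gh}(S)$.
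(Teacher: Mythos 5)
Your proof is correct and is essentially identical to the paper's own proof: both compute the dot product as $\sum_{s \in S} \chi_g(s)\chi_h(s)$ and collapse it to $\chi_{gh}(S)$ via the multiplicativity relation $\chi_g(s)\chi_h(s)=\chi_{gh}(s)$. Your added clarification that the dot product is the unconjugated bilinear form is a reasonable observation but not needed beyond what the paper implicitly assumes.
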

\begin{proof}
Let the column of $C(S)$ indexed by $g \in G$ be $\vc_g$.
Then 
\[
\vc_g \cdot \vc_h = 
\sum_{s \in S} \chi_g(s) \chi_h(s) =
\sum_{s \in S} \chi_{gh}(s) =
\chi_{gh}(S).
\]
\end{proof}

We now show how to form a balanced splittable Hadamard matrix from a partial difference set in~$\Z_2^r$, via the character table of~$\Z_2^r$.

\begin{theorem}\label{Thm:PDS}
Let $D$ be a subset of~$\Z_2^r$.
Then 
$C(\Z_2^r)$ contains the all-ones row, 
and
$D=D^{(-1)}$, and 
the following are equivalent:
\begin{enumerate}[$(i)$]
\item
$C(\Z_2^r)$ is a $\BSHM(2^r,\ell,a,b)$ with respect to $C(D)$,
having Type 1 if $1_{\Z_2^r} \not \in D$ and $b \ne -a$ and having Type 2 if $1_{\Z_2^r} \in D$ and $b \ne -a$
\item
$D$ is a partial difference set of cardinality $\ell$ in $\Z_2^r$ whose nonprincipal character sums take the values $a$ and~$b$ (and whose parameters $\alpha,\beta$ are given by \eqref{Eq:lambda-mu} with $G = \Z_2^r$).
\end{enumerate}
\end{theorem}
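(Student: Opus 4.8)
The plan is to translate the two matrix-theoretic assertions in~(i)---that $C(\Z_2^r)$ is Hadamard and that the dot products of its columns over the rows indexed by $D$ are two-valued---into the character-sum language of \cref{Prop:PDSchar}, using \cref{Lemma:C(S)} as the bridge. First I would dispose of the two unconditional claims. Since every character maps the identity to~$1$, the row of $C(\Z_2^r)$ indexed by $1_{\Z_2^r}$ is $\vo^T$, which gives the all-ones row; and since every element of $\Z_2^r$ is its own inverse, $D^{(-1)} = D$ automatically, so \cref{Prop:PDSchar} is applicable. To see that $C(\Z_2^r)$ is a Hadamard matrix of order $2^r$, note that its entries are $\pm 1$ (as $\exp(\Z_2^r) = 2$) and apply \cref{Lemma:C(S)} with $S = \Z_2^r$: the dot product of the columns indexed by $g$ and $h$ is $\chi_{gh}(\Z_2^r)$, which equals $2^r$ when $g = h$ and $0$ otherwise by orthogonality of characters.

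Next I would prove the equivalence of~(i) and~(ii) by matching column dot products to nonprincipal character sums. By \cref{Lemma:C(S)}, the dot product of the columns of $C(D)$ indexed by distinct $g, h$ is $\chi_{gh}(D)$; as $(g,h)$ ranges over all ordered pairs of distinct elements, the product $gh$ ranges over all nonidentity elements of $\Z_2^r$, each attained. Hence $C(\Z_2^r)$ is a $\BSHM(2^r, \ell, a, b)$ with respect to $C(D)$, where $\ell = |D|$, if and only if every nonprincipal character sum $\chi(D)$ lies in $\{a, b\}$. Because $D = D^{(-1)}$, \cref{Prop:PDSchar} together with the identity \eqref{Eq:lambda-mu} converts this two-valued condition into the statement that $D$ is a partial difference set of cardinality $\ell$ whose nonprincipal character sums are $a$ and $b$, with parameters $\alpha, \beta$ given by \eqref{Eq:lambda-mu}; this yields both directions at once.

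Finally I would settle the Type assignment for $b \ne -a$ by computing the row sums of $C(D)$. The row indexed by $s$ sums to $\sum_{g \in \Z_2^r} \chi_g(s)$, which is $2^r$ if $s = 1_{\Z_2^r}$ and $0$ otherwise. Thus if $1_{\Z_2^r} \notin D$ then $C(D)\vo = \vz$, which is condition~(A1) of \cref{Prop:SRG2}, so $H$ has Type~1; and if $1_{\Z_2^r} \in D$ then $C(D)$ contains the all-ones row while every other row sum vanishes, so $(C(D)\vo)^T(C(D)\vo) = (2^r)^2 = n^2$, which is condition~(B1), so $H$ has Type~2. In each case \cref{Prop:SRG2} guarantees that exactly one of the two Types occurs, confirming the assignment.

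The identity- and inverse-element observations and the Hadamard verification are routine. The main obstacle is the careful bookkeeping in the middle step: one must check that the correspondence $(g,h) \mapsto gh$ really sweeps out precisely the nonprincipal character sums, so that the balanced splittable condition is exactly two-valuedness of these sums, and that invoking \cref{Prop:PDSchar} in the direction ``two-valued character sums imply a partial difference set'' is legitimate. Here the hypothesis $D = D^{(-1)}$ is essential, the integrality and nonnegativity of $\alpha, \beta$ must be accounted for, and the values $\alpha, \beta$ must be read off from \eqref{Eq:lambda-mu} consistently with whether $1_{\Z_2^r} \in D$.
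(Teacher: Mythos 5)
Your overall route is the same as the paper's: read off the all-ones row and $D=D^{(-1)}$ from the structure of $\Z_2^r$, apply \cref{Lemma:C(S)} with $S=\Z_2^r$ to get that $C(\Z_2^r)$ is Hadamard, apply it with $S=D$ to identify the dot products of distinct columns of $C(D)$ with the nonprincipal character sums of $D$, and then pass through \cref{Prop:PDSchar}, settling the Type by row sums. The unconditional claims, the Hadamard verification, the sweep argument showing that the distinct-column dot products are exactly the nonprincipal character sums, and your Type argument (verify the row-sum condition of \cref{Prop:SRG2}~(A1) or (B1), then use the fact that exactly one Type occurs to exclude the other) are all correct.

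The genuine gap is in your middle step, precisely where you yourself locate ``the main obstacle,'' and you never close it. \cref{Prop:PDSchar} does \emph{not} assert that two-valued nonprincipal character sums make $D$ a partial difference set: its condition $(ii)$ additionally requires that $\alpha,\beta$ be nonnegative integers satisfying $\ell^2=\gamma+(\alpha-\beta)\ell+\beta v$. So in the direction $(i)\Rightarrow(ii)$, after defining $\alpha,\beta$ by \eqref{Eq:lambda-mu}, you must verify this equation; with those values it reads $n(\ell+ab)=(\ell-a)(\ell-b)$ when $1_{\Z_2^r}\notin D$ and $n(\ell+ab-a-b)=(\ell-a)(\ell-b)$ when $1_{\Z_2^r}\in D$, which are exactly \cref{Prop:SRG2}~(A2) and (B2). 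This is how the paper closes the loop: it first pins down the Type (Type~1 when $1_{\Z_2^r}\notin D$, Type~2 when $1_{\Z_2^r}\in D$, via the row-sum conditions), and then (A2) or (B2) holds for the BSHM and supplies the required equation (and, e.g., $\beta\ge 0$). You prove the Type assignment, but only \emph{after} the equivalence and without feeding it back in; your closing paragraph lists the legitimacy of the \cref{Prop:PDSchar} application and the integrality/nonnegativity of $\alpha,\beta$ as obligations but offers no argument for them. The fix is organizational but essential: run the Type determination first, then cite (A2)/(B2) when invoking \cref{Prop:PDSchar} (when $b=-a$ the Types are undefined and the needed equation is instead \cref{Prop:SRG1}~(1); alternatively one can derive the equation directly from Parseval's identity). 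Some such argument must appear, and your write-up contains none.
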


\begin{proof}
Since $\exp(\Z_2^r) =2$, each character of $\Z_2^r$ takes values in $\{1,-1\}$. 
Therefore $C(\Z_2^r)$ is an order $2^r$ matrix over $\{1,-1\}$, and contains the all-ones row (namely the row indexed by $1_{\Z_2^r}$). 
This row is contained in the $|D| \times 2^r$ submatrix $C(D)$ if and only if $1_{\Z_2^r} \in D$. 

Since $h = h^{-1}$ for each $h \in \Z_2^r$, we have $D = D^{(-1)}$ and we may replace $\chi_{gh}$ by $\chi_{gh^{-1}}$ when applying \cref{Lemma:C(S)} with $G = \Z_2^r$.  
Take $S=\Z_2^r$ in \cref{Lemma:C(S)}. Then the dot product of distinct columns of $C(\Z_2^r)$ indexed by $g$ and $h$ is
$\chi_{gh^{-1}}(\Z_2^r)$, which equals $0$ because $\chi_{gh^{-1}}$ is nonprincipal for $g \ne h$.
Therefore $C(\Z_2^r)$ is a Hadamard matrix.
Now take $S=D$ in \cref{Lemma:C(S)}, and let the column of $C(D)$ indexed by $g \in \Z_2^r$ be $\vc_g$.
Then
\begin{align*}
\{\vc_g \cdot \vc_h : g,h \in \Z_2^r \mbox{ are distinct}\} 
 &= \{\chi_{gh^{-1}}(D) : g,h \in \Z_2^r \mbox{ are distinct}\} \\
 &= \{\chi(D) : \chi \mbox{ is nonprincipal on }\Z_2^r\}.
\end{align*}
This, together with \cref{Prop:PDSchar} and \cref{Prop:SRG2}~(A1),(A2),(B2), 
establishes the equivalence of $(i)$ and~$(ii)$.
\end{proof}

\begin{remark}
\mbox{}
\begin{enumerate}[$(i)$]
\item
In the special case $b=-a$, \cref{Thm:PDS} demonstrates the equivalence of a difference set in $\Z_2^r$ with a real harmonic ETF over $\{1,-1\}$, as has been shown in \cite{DF,XZG}; see also \cite[Section 2.3]{STD+}.

\item
A connection between a partial difference set and a biangular (two-distance) tight frame was noted in \cite[Theorem 4.24]{CFHT}; 
however, the partial difference set must be restricted to lie in an elementary abelian $2$-group in order to obtain a balanced splittable Hadamard matrix according to \cref{Thm:PDS}. 

\item
The construction of \cref{Thm:PDS} employs a similar technique to that used in \cite{GR} to construct a set of mutually unbiased bases from a semiregular relative difference set.

\item
All parameter constraints on a $\BSHM(2^r,\ell,a,b)$ (in particular, 
\cref{Prop:SRG2} and \cref{Cor:ell2}) apply via \cref{Thm:PDS} to partial difference sets of cardinality $\ell$ in $\Z_2^r$ whose nonprincipal character sums take the values $a$ and~$b$.
\end{enumerate}
\end{remark}

\begin{corollary}\label{Cor:PDSAandB}
Let $1 < \ell < 2^r-2$ and $b \not \in \{-a,-a-2\}$.
Suppose $D$ is a partial difference set of cardinality~$\ell$ in $\Z_2^r$ whose nonprincipal character sums take the values $a$ and~$b$, and that $1_{\Z_2^r} \not \in D$. Then there exists a Type~1 $\BSHM(2^r,\ell,a,b)$ and there exists a Type~2 $\BSHM(2^r,\ell+1,a+1,b+1)$.
\end{corollary}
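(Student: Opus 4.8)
The plan is to obtain both matrices by chaining two results already established in the excerpt: \cref{Thm:PDS}, which converts the given partial difference set directly into a Type~1 balanced splittable Hadamard matrix, and \cref{Prop:add-subtract}~$(ii)$, which transfers a Type~1 matrix containing the all-ones row into a Type~2 matrix of the shifted parameters. Neither step requires new computation, so the proof is essentially a verification that the hypotheses of these two results line up.

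First I would dispose of the Type~1 claim. The hypothesis $b \notin \{-a,-a-2\}$ gives in particular $b \ne -a$, which is exactly the condition under which the Type classification in \cref{Thm:PDS} is meaningful. Since $D$ is a partial difference set of cardinality $\ell$ in $\Z_2^r$ whose nonprincipal character sums take the values $a$ and $b$, and since $1_{\Z_2^r} \notin D$, the implication $(ii) \Rightarrow (i)$ of \cref{Thm:PDS} shows that the character table $C(\Z_2^r)$ is a Type~1 $\BSHM(2^r,\ell,a,b)$ with respect to $C(D)$. This already proves the first assertion.

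For the Type~2 claim I would feed this matrix into \cref{Prop:add-subtract}~$(ii)$ with $n = 2^r$. The two hypotheses $1 < \ell < 2^r-2$ and $b \notin \{-a,-a-2\}$ match verbatim the hypotheses of that proposition, so the only thing left to check is that $C(\Z_2^r)$ is a Type~1 $\BSHM$ \emph{containing the all-ones row}. \cref{Thm:PDS} records that $C(\Z_2^r)$ does contain the all-ones row (namely the row indexed by $1_{\Z_2^r}$); moreover, because $1_{\Z_2^r} \notin D$, this row lies in the complementary submatrix rather than in $C(D)$, consistent with $H_1 \vo = \vz$ for a Type~1 matrix as in \cref{Prop:SRG2}~(A1). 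The forward direction of the equivalence in \cref{Prop:add-subtract}~$(ii)$ then yields a Type~2 $\BSHM(2^r,\ell+1,a+1,b+1)$, completing the argument.

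I do not expect a genuine obstacle here: the content is purely the alignment of hypotheses. The one point deserving a moment's care is that the exclusion $b \notin \{-a,-a-2\}$ does double duty. The weaker exclusion $b \ne -a$ is what makes the Type~1/Type~2 dichotomy applicable in \cref{Thm:PDS}, while the full exclusion is what licenses the row-augmentation step of \cref{Prop:add-subtract}~$(ii)$ by guaranteeing $b+1 \ne -(a+1)$, so that the augmented matrix again falls into the $b \ne -a$ regime and is correctly classified as Type~2.
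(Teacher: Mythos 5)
Your proof is correct and takes essentially the same route as the paper's own proof: apply \cref{Thm:PDS} (direction $(ii) \Rightarrow (i)$) to get a Type~1 $\BSHM(2^r,\ell,a,b)$ with respect to $C(D)$ that contains the all-ones row, then feed this into \cref{Prop:add-subtract}~$(ii)$ to obtain the Type~2 $\BSHM(2^r,\ell+1,a+1,b+1)$. Your closing remarks---that $1_{\Z_2^r} \notin D$ places the all-ones row outside $C(D)$ (consistent with \cref{Prop:SRG2}~(A1)) and that the exclusion $b \notin \{-a,-a-2\}$ serves both to invoke the type dichotomy and to guarantee $b+1 \ne -(a+1)$---are exactly the hypothesis checks the paper leaves implicit.
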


\begin{proof}
By \cref{Thm:PDS}, $C(\Z_2^r)$ is a Type 1 $\BSHM(2^r,\ell,a,b)$ with respect to $C(D)$ that contains the all-ones row. Then by \cref{Prop:add-subtract}~$(ii)$, there exists a Type 2 $\BSHM(2^r,\ell+1,a+1,b+1)$.
\end{proof}

We now apply \cref{Cor:PDSAandB} to families of regular partial difference sets in elementary abelian $2$-groups (obtained in each case from the indicated reference) in order to construct
new infinite families of balanced splittable Hadamard matrices.
This result was used to determine existence for 14 of the 29 entries in each of \cref{Tab:survivebne-aType1,Tab:survivebne-aType2},
leaving no open cases for the existence of a primitive $\BSHM(n,\ell,a,b)$ with $n \in \{64,256\}$.

\begin{corollary}\label{Cor:PDS}
There exists a Type~1 $\BSHM(n,\ell,a,b)$ and a Type~2 $\BSHM(n,\ell+1,a+1,b+1)$ for each of the following parameter sets $(n,\ell,a,b)$, provided $1 < \ell < n-2$ and $b \not \in \{-a,-a-2\}$:
\begin{enumerate}[$(i)$]
\item
$(2^{2m},s(2^m-1),2^m-s,-s)$, 
where $m \ge 1$ and $1 \le s \le 2^m+1$	{\rm\cite[Example~2.3.3]{Ma}}

\item
$\big(2^{3m},\, (2^{m+s}-2^m+2^s)(2^m-1),\, 2^m-2^s,\, 2^m-2^s-2^{m+s} \big)$, 
where $1 \le s < m$ {\rm\cite[Example~9.4]{Ma}}

\item
$\big(2^{2sm},\, 2^{(s-1)m}(2^{m-1}-1)(2^{sm}-1),\, 2^{(s-1)m}(2^{m-1}+1),\, -2^{(s-1)m}(2^{m-1}-1) \big)$ 
and \\
$\big(2^{2sm},\, 2^{(s-1)m}(2^{m-1}-1)(2^{sm}+1),\, 2^{(s-1)m}(2^{m-1}-1),\, -2^{(s-1)m}(2^{m-1}+1) \big)$, 
where $s,m \ge 1$ {\rm\cite[Example~9.6]{Ma}}

\item
$\big(2^{12m},\, t(2^{2m}-1)(2^{2m}+2^m+1)(2^{6m}+1),\, t(2^{2m}-1)(2^{2m}+2^m+1),\, t(2^{2m}-1)(2^{2m}+2^m+1)-2^{6m} \big)$, 
where $m \ge 1$ and $1 \le t \le 2^m(2^m-1)$ {\rm\cite[Theorem~9.7]{Ma}}

\item
$\big(2^{(4s+2)m},\, t\frac{(2^{(2s+1)m}-1)(2^{(2s-1)m}+1)}{2^m+1},\, 2^{(2s+1)m} - t\frac{2^{(2s-1)m}+1}{2^m+1},\, -t\frac{2^{(2s-1)m}+1}{2^m+1} \big)$ and \\
$\big(2^{4sm},\, t\frac{2^{4sm}-1}{2^m+1},\, t\frac{2^{2sm}-1}{2^m+1},\, \frac{2^{2sm}(t-1-2^m)-t}{2^m+1} \big)$,
where $s, m \ge 1$ and $1 \le t \le 2^m+1$ {\rm\cite[Example~10.5]{Ma}}

\item
$\big(2^{(4s+2)m},\, \frac{2^m(2^{2sm}-1)}{2^m+1}(2^{(2s+1)m}+1),\, \frac{2^m(2^{2sm}-1)}{2^m+1},\, -\frac{2^m(2^{(2s+1)m}+1)}{2^m+1} \big)$, 
where $s,m \ge 1$ {\rm\cite[p. 282]{Mo}}

\item
$\big(2^{4sm},\, \frac{(2^{2sm}-1)^2}{t},\, \frac{(t-1)2^{2sm}+1}{t},\, -\frac{2^{2sm}-1}{t} \big)$ \,\,
and \,\,
$\big(2^{4sm},\, \frac{2^{4sm}-1}{t},\, \frac{2^{2sm}-1}{t},\, -\frac{(t-1)2^{2sm}+1}{t} \big)$, 
where $m \ge 2$ and $t \ge 3$ is odd and $s$ (if it exists) is the smallest positive integer for which $t \mid (2^s+1)$ {\rm\cite[Corollary 3.7]{MX}} 

\item
$\big(2^{2m},\, (2^{m-s}-1)(2^{m}-1),\, -2^{m-s}+1+2^m,\, 1-2^{m-s} \big)$ 
and \\
$\big(2^{2m},\, (2^{m-s}-1)(2^{m}+1),\, 2^{m-s}-1,\, 2^{m-s}-1-2^m \big)$, 
where $m \ge 3$ and $s \mid m$ {\rm\cite[Corollary 3.8]{MX}}. 
\end{enumerate}
\end{corollary}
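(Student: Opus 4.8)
The plan is to obtain \cref{Cor:PDS} as a direct application of \cref{Cor:PDSAandB} to eight known families of regular partial difference sets, handling the families $(i)$--$(viii)$ one at a time. For each family the order $n$ is a power of $2$, say $n = 2^r$, so the ambient group is the elementary abelian $2$-group $\Z_2^r$, and the cited reference supplies a regular partial difference set $D$ of cardinality $\ell$ in $\Z_2^r$ with explicit parameters $(v,\ell,\alpha,\beta)$. Regularity means $1_{\Z_2^r} \not\in D$ (and $D = D^{(-1)}$ holds automatically in $\Z_2^r$), which is precisely the hypothesis that \cref{Cor:PDSAandB} requires. If a reference instead records a partial difference set containing $1_{\Z_2^r}$, I would first pass to the regular version by deleting the identity, using the standard reduction noted after \cref{Prop:PDSchar}.

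For each family, the only substantive step is to confirm that the nonprincipal character sums of $D$ equal the two values $a$ and $b$ stated in the corollary. Rather than evaluating the character sums from scratch, I would use the equivalence recorded in \eqref{Eq:lambda-mu}: for $1_{\Z_2^r} \not\in D$, the nonprincipal character sums take the values $a$ and $b$ if and only if
\[
(\alpha,\beta) = (\ell + ab + a + b,\; \ell + ab).
\]
Thus for each family the verification reduces to checking these two polynomial identities against the parameters $(\alpha,\beta)$ read off from the reference, with $a$ assigned to the larger of the two character-sum values in keeping with the convention $a \ge b$ of \cref{Rem:basic}~$(i)$. Having matched $(a,b)$, \cref{Cor:PDSAandB} then delivers both the Type~1 $\BSHM(n,\ell,a,b)$ and the Type~2 $\BSHM(n,\ell+1,a+1,b+1)$ for that family, subject to the stated provisos $1 < \ell < n-2$ and $b \not\in \{-a,-a-2\}$, which are exactly the hypotheses of \cref{Cor:PDSAandB}.

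I expect the main obstacle to be bookkeeping rather than anything conceptual. Each reference parametrizes its partial difference set in its own notation, sometimes with an auxiliary index (the $s$ or $t$ appearing in families such as $(v)$, $(vii)$, $(viii)$) and sometimes under a divisibility side condition (as for $(vii)$). Translating each reference's parameters into the form $(v,\ell,\alpha,\beta)$ and verifying the two identities $\alpha = \ell+ab+a+b$ and $\beta = \ell+ab$ is a routine but lengthy algebraic check that must be repeated eight times while carefully carrying the constraints on the indices. No step requires an idea beyond \cref{Cor:PDSAandB} and the character-theoretic description of a partial difference set in \cref{Prop:PDSchar}.
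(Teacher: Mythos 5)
Your proposal matches the paper's own treatment: the paper likewise obtains \cref{Cor:PDS} by applying \cref{Cor:PDSAandB} to the eight cited families of regular partial difference sets in elementary abelian $2$-groups, with the character-sum values $a,b$ read off from the parameters $(v,\ell,\alpha,\beta)$ via \cref{Prop:PDSchar} and \eqref{Eq:lambda-mu}, exactly as you describe. The paper presents this as a routine application (it gives no further written proof), so your plan, including the identity-deletion reduction and the parameter bookkeeping, is essentially the same argument.
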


Call a representation of a matrix $H$ in the form $\begin{pmatrix} H_1 \\ H_2 \\ \vdots \\ H_w \end{pmatrix}$
a \emph{row decomposition of $H$ into submatrices $H_1, H_2, \dots H_w$}.
\cref{Res:twin} demonstrates the existence of a family of Hadamard matrices $H$ admitting a row decomposition into submatrices $H_1, H_2, H_3$ so that $H$ has the balanced splittable property with respect to each~$H_i$.
We shall significantly extend this result by making a connection to special packings of partial difference sets: groups whose nonzero elements can be partitioned into subsets so that every subset, and every union of subsets, is a partial difference set.

\begin{definition}\label{Defn:PDSpacking}
Let $G$ be a finite abelian group. A \emph{$(\delta,t)$ partial difference set packing of $G$ with respect to $(a_1,\dots,a_t)$} is a
partition of $G \setminus \{1_G\}$ into subsets $D_1, \dots, D_t$ 
such that each $D_i$ is a partial difference set in $G$ with nonprincipal character sums $a_i$ or $\delta+a_i$, 
and for each subset $I$ of $\{1,\dots,t\}$ the set $\bigcup_{i \in I}D_i$ is a partial difference set in $G$ 
with nonprincipal character sums $\sum_{i \in I}a_i$ or $\delta+\sum_{i \in I}a_i$.
\end{definition}

\begin{theorem}\label{Thm:PDSpackingtoBSHM}
Suppose $D_1, \dots, D_t$ is a $(\delta,t)$ partial difference set packing in~$\Z_2^r$ with respect to $(a_1,\dots,a_t)$.
Partition $\{1,\dots,t\}$ into subsets $I_1 \dots, I_w$,
and for each $u$ in $\{1,\dots,w\}$ let
$\ell_u = \sum_{i \in I_u} |D_i|$ and 
$\alpha_u = \sum_{i \in I_u} a_i$ and 
$H_u = C\Big(\bigcup_{i \in I_u} D_i\Big)$. 
Let $j \in \{1,\dots,w\}$.
Then the order $2^r$ matrix $C(\Z_2^r)$ admits a row decomposition into 
submatrices $\begin{pmatrix} \vo^T \end{pmatrix}, H_1, \dots, H_w$ such that
the matrix $C(\Z_2^r)$ is simultaneously a
$\BSHM( 2^r, \ell_j+1, \alpha_j +1, \delta +\alpha_j +1)$ 
with respect to~$\begin{pmatrix} \vo^T \\ H_j \end{pmatrix}$ and a
$\BSHM( 2^r, \ell_u, \alpha_u, \delta +\alpha_u)$ 
with respect to $H_u$ for each $u \ne j$.
\end{theorem}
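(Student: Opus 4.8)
The plan is to exploit the single fixed Hadamard matrix $C(\Z_2^r)$ and to read off every balanced splittable statement from \cref{Lemma:C(S)} together with the defining property of the packing in \cref{Defn:PDSpacking}. First I would record that $C(\Z_2^r)$ is a Hadamard matrix of order $2^r$: since $\exp(\Z_2^r)=2$ every character takes values in $\{1,-1\}$, and by \cref{Lemma:C(S)} distinct columns indexed by $g$ and $h$ have dot product $\chi_{gh^{-1}}(\Z_2^r)=0$ because $gh^{-1}\ne 1_{\Z_2^r}$; this is exactly the argument already used in \cref{Thm:PDS}. The row decomposition is then immediate from the set partition: writing $S_u=\bigcup_{i\in I_u}D_i$, the sets $\{1_{\Z_2^r}\}, S_1,\dots,S_w$ partition $\Z_2^r$ because $D_1,\dots,D_t$ partition $\Z_2^r\setminus\{1_{\Z_2^r}\}$ and $I_1,\dots,I_w$ partition $\{1,\dots,t\}$. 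Hence the rows of $C(\Z_2^r)$ split as the single all-ones row $\vo^T$ (indexed by $1_{\Z_2^r}$, since $\chi_g(1_{\Z_2^r})=1$ for all $g$) together with the blocks $H_u=C(S_u)$, giving the claimed decomposition into $(\vo^T),H_1,\dots,H_w$.

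Next I would treat the blocks $H_u$ with $u\ne j$. By the packing property, each $S_u$ is a partial difference set whose nonprincipal character sums lie in $\{\alpha_u,\delta+\alpha_u\}$, where $\alpha_u=\sum_{i\in I_u}a_i$. Applying \cref{Lemma:C(S)} with $S=S_u$, the dot product of distinct columns $g,h$ of $H_u$ equals $\chi_{gh^{-1}}(S_u)$, and $\chi_{gh^{-1}}$ is nonprincipal precisely because $g\ne h$; hence this dot product lies in $\{\alpha_u,\delta+\alpha_u\}$. Since $H_u$ has $\ell_u=\sum_{i\in I_u}|D_i|$ rows, this shows $C(\Z_2^r)$ is a $\BSHM(2^r,\ell_u,\alpha_u,\delta+\alpha_u)$ with respect to $H_u$.

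For the distinguished index $j$, the only new feature is the adjoined all-ones row. I would apply \cref{Lemma:C(S)} with $S=\{1_{\Z_2^r}\}\cup S_j$, whose character table is exactly $\begin{pmatrix}\vo^T\\H_j\end{pmatrix}$. For distinct columns $g,h$ the dot product is $\chi_{gh^{-1}}(\{1_{\Z_2^r}\}\cup S_j)=\chi_{gh^{-1}}(1_{\Z_2^r})+\chi_{gh^{-1}}(S_j)=1+\chi_{gh^{-1}}(S_j)$, which lies in $\{\alpha_j+1,\delta+\alpha_j+1\}$ by the packing property; as this submatrix has $\ell_j+1$ rows, we obtain a $\BSHM(2^r,\ell_j+1,\alpha_j+1,\delta+\alpha_j+1)$. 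Alternatively, one could first establish the $\BSHM(2^r,\ell_j,\alpha_j,\delta+\alpha_j)$ statement for $H_j$ as in the previous paragraph and then invoke \cref{Rem:addrow} to absorb the all-ones row and shift each parameter by $1$. Because all these statements concern column dot products of submatrices of the one matrix $C(\Z_2^r)$, they hold simultaneously, as required.

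I do not expect a genuine obstacle here: the content is a bookkeeping exercise once \cref{Lemma:C(S)} and \cref{Defn:PDSpacking} are in hand. The point that most needs care is the asymmetric role of $j$: the all-ones row must be attached to exactly one block, and attaching it shifts $(\ell,a,b)$ to $(\ell+1,a+1,b+1)$ because every nonprincipal character evaluates to $1$ on $1_{\Z_2^r}$. I would also note in passing that the parameters meet the nontriviality range $1\le \ell_u\le 2^r-1$ whenever the packing is nonempty, so that the balanced splittable terminology indeed applies.
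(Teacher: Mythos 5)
Your proposal is correct and follows essentially the same route as the paper's proof: the paper also obtains the row decomposition from the fact that the packing partitions $\Z_2^r\setminus\{1_{\Z_2^r}\}$, gets each $\BSHM(2^r,\ell_u,\alpha_u,\delta+\alpha_u)$ statement from the packing property via \cref{Thm:PDS} (which is itself just \cref{Lemma:C(S)} applied as you do), and handles the distinguished block $j$ by adjoining the all-ones row via \cref{Rem:addrow}, exactly the alternative you mention. Your direct computation $\chi_{gh^{-1}}(\{1_{\Z_2^r}\}\cup S_j)=1+\chi_{gh^{-1}}(S_j)$ is merely an inlined version of that same shift-by-one step.
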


\begin{proof}
By \cref{Defn:PDSpacking}, we have $1_{\Z_2^r} \not \in D_u$ for each~$u$.
Therefore by \cref{Defn:C(D)}, the matrix $C(\Z_2^r)$ admits a row decomposition into submatrices 
$\begin{pmatrix} \vo^T \end{pmatrix}, H_1, \dots, H_w$.
By \cref{Defn:PDSpacking,Thm:PDS}, the matrix $C(\Z_2^r)$ is a
$\BSHM(2^r, \ell_u, \alpha_u, \delta+\alpha_u)$ with respect to $H_u$ for each $u$ in $\{1,\dots,w\}$.
Apply \cref{Rem:addrow}.
\end{proof}

\begin{remark}
By \cref{Thm:PDS}, the balanced splittable Hadamard matrices constructed in \cref{Thm:PDSpackingtoBSHM}
for $u \ne j$ have Type 1 provided $\alpha_u \ne -(\delta+\alpha_u)$,
and 
that for $u=j$ has Type 2 provided $\alpha_j +1 \ne -(\delta+\alpha_j+1)$.
\end{remark}

The LP-packings and NLP-packings of partial difference sets introduced in \cite{JL21} provide examples of $(\delta,t)$ partial difference set packings.

\begin{definition}{\cite[Definition 3.1 and Lemma 2.5]{JL21}}
Let $t > 1$ and $c > 0$ be integers.
Let $G$ be an abelian group of order $t^2c^2$, and let $U$ be a subgroup of $G$ of order~$tc$.
A \emph{$(c,t)$ LP-packing in $G$ relative to $U$} is a partition of $G \setminus U$ into 
$t$ partial difference sets in $G$, each of which have cardinality $c(tc-1)$ and nonprincipal character sums $-c$ or $(t-1)c$.
\end{definition}

\begin{lemma}{\rm{\cite[Lemma 3.9]{JL21}}}\label{Lemma:LPtoPDSpacking}
Suppose that $D_1,\dots, D_t$ is a $(c,t)$ LP-packing in an abelian group $G$ of order $t^2c^2$ 
relative to a subgroup $U$ of order $tc$.
Then $U \setminus \{1_G\}, D_1, \dots, D_t$ is a $(tc, t+1)$ partial difference set packing in~$G$ with respect to $(-1,-c,-c,\dots,-c)$.
\end{lemma}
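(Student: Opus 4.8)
The plan is to prove the statement entirely through character sums, using \cref{Prop:PDSchar} to recognise partial difference sets and the standard evaluation of a character on a subgroup. Throughout, write $E_0 = U \setminus \{1_G\}$ and $E_i = D_i$ for $1 \le i \le t$, and set $(a_0, a_1, \dots, a_t) = (-1, -c, \dots, -c)$ and $\delta = tc$, so that the claim is exactly that $E_0, E_1, \dots, E_t$ satisfies \cref{Defn:PDSpacking} for these data. Every $E_i$ is inverse-closed: $U$ is a subgroup so $E_0 = E_0^{(-1)}$, and each $D_i$ satisfies $D_i = D_i^{(-1)}$ because its two distinct nonprincipal character sums force $\alpha \ne \beta$ (invoking \cite[Prop.~1.2]{Ma} as noted after \cref{def-pds}); hence every union $\bigcup_{i \in I} E_i$ is inverse-closed, and none contains $1_G$. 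So by \cref{Prop:PDSchar} it suffices to show that for each $I \subseteq \{0, 1, \dots, t\}$ every nonprincipal character sum of $\bigcup_{i \in I} E_i$ lies in $\{\sum_{i \in I} a_i,\ \delta + \sum_{i \in I} a_i\}$, and then to read off the parameters $\alpha, \beta$ from \eqref{Eq:lambda-mu}.

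The key step is a structural dichotomy for the blocks. Recall that for $\chi \in \wh{G}$ one has $\chi(U) = tc$ when $\chi$ is trivial on $U$ and $\chi(U) = 0$ otherwise; since $1_G \in U$, this gives $\chi(E_0) = tc - 1$ or $-1$ respectively for nonprincipal $\chi$. Because $D_1, \dots, D_t$ partition $G \setminus U$, for nonprincipal $\chi$ we have $\sum_{i=1}^t \chi(D_i) = \chi(G) - \chi(U) = -\chi(U)$. The LP-packing hypothesis gives $\chi(D_i) \in \{-c,\, (t-1)c\}$, so writing $m = m(\chi)$ for the number of indices with $\chi(D_i) = (t-1)c$, the sum evaluates to $ct(m-1)$. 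Equating this with $-\chi(U)$ forces $m = 0$ when $\chi$ is trivial on $U$ (every $\chi(D_i) = -c$) and $m = 1$ when $\chi$ is nontrivial on $U$ (exactly one block $D_{i_0(\chi)}$ has $\chi(D_{i_0}) = (t-1)c$ and all others $-c$). In either case exactly one of the $t+1$ blocks is \emph{boosted}, meaning its character sum is $a_i + \delta$ rather than $a_i$: the block $E_0$ when $\chi$ is trivial on $U$, and the block $D_{i_0(\chi)}$ when $\chi$ is nontrivial on $U$.

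With this dichotomy the remaining computation is routine linearity. For any $I$ and any nonprincipal $\chi$ we have $\chi\big(\bigcup_{i \in I} E_i\big) = \sum_{i \in I} \chi(E_i) = \sum_{i \in I} a_i + \delta$ if the unique boosted block lies in $I$, and $\sum_{i \in I} a_i$ otherwise; in both cases the value lies in the set required by \cref{Defn:PDSpacking}. Feeding the two possible values into \eqref{Eq:lambda-mu} (with $1_G \notin \bigcup_{i \in I} E_i$) produces $(\alpha,\beta)$, and a short check confirms these are nonnegative integers satisfying the identity of \cref{Prop:PDSchar}, so each union is genuinely a partial difference set (the only degenerate unions, $I = \{0,\dots,t\}$ giving $G \setminus \{1_G\}$ and the complementary extremes, are standard partial difference sets). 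The singleton cases $|I| = 1$ recover the individual sums: $-1$ or $tc - 1$ for $E_0$, matching $a_0 = -1$, and $-c$ or $(t-1)c$ for each $D_i$, matching $a_i = -c$. This verifies that $U \setminus \{1_G\}, D_1, \dots, D_t$ is a $(tc, t+1)$ partial difference set packing with respect to $(-1, -c, \dots, -c)$.

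The main obstacle is the structural dichotomy of the second paragraph, namely that each nonprincipal character boosts exactly one block; everything downstream is bookkeeping. This balance property is precisely the content of the LP-packing hypothesis and must be extracted from the constraint $\sum_{i} \chi(D_i) = -\chi(U)$ together with the two-valuedness $\chi(D_i) \in \{-c,\, (t-1)c\}$. The only other point requiring mild care is confirming nonnegativity and integrality of $(\alpha,\beta)$ in the degenerate extremes so that \cref{Prop:PDSchar} applies uniformly.
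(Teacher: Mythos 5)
The paper itself does not prove this lemma---it is quoted from \cite[Lemma 3.9]{JL21}---so your argument stands as a self-contained verification, and its core is correct and is surely the intended one. The structural dichotomy is exactly right: since $\sum_{i=1}^t \chi(D_i) = -\chi(U)$ for nonprincipal $\chi$ and each $\chi(D_i) \in \{-c,(t-1)c\}$, the count $m(\chi)$ of boosted blocks satisfies $tc(m-1) = -\chi(U)$, forcing $m=0$ when $\chi$ is trivial on $U$ (where $E_0 = U\setminus\{1_G\}$ is boosted, $\chi(E_0)=tc-1$) and $m=1$ otherwise (where $\chi(E_0)=-1$); so exactly one of the $t+1$ blocks is boosted by the common amount $\delta = tc$, and every union is two-valued as \cref{Defn:PDSpacking} requires. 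The deferred ``short check'' is indeed routine: for a two-valued, inverse-closed set avoiding $1_G$, the quadratic identity in \cref{Prop:PDSchar} follows automatically from Parseval, and the values $(\alpha,\beta)$ from \eqref{Eq:lambda-mu} come out as nonnegative integers (e.g.\ $\beta = kc(kc-1)$ or $(kc+1)kc$ according as $0 \notin I$ or $0 \in I$, where $k = |I \cap \{1,\dots,t\}|$).

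However, one step as written is wrong. You justify $D_i = D_i^{(-1)}$ by claiming that the two distinct character sums force $\alpha \ne \beta$, so that \cite[Prop.~1.2]{Ma} applies. The definition of an LP-packing allows $t=2$, and there the two values are $-c$ and $c$: for real character sums, the attained values are roots of $x^2 - (\alpha-\beta)x - (\ell-\beta)$, so they sum to $\alpha - \beta = (t-2)c$, which is $0$ when $t=2$. Thus for $t=2$ your implication fails and Ma's proposition cannot be invoked (the claim also tacitly assumes both values are attained, which needs a separate argument). The repair is immediate and uniform in $t$: the hypothesis states that all nonprincipal character sums of $D_i$ are real, hence $\chi\big(D_i^{(-1)}\big) = \overline{\chi(D_i)} = \chi(D_i)$ for every character $\chi$ of $G$, and Fourier inversion gives $D_i = D_i^{(-1)}$. (In the paper's application, $G = \Z_2^r$, where inverse-closure is trivial, but the lemma is stated for general abelian $G$, so this point must be handled.) With that one substitution your proof is complete.
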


\begin{definition}{\cite[Definition 6.1 and Lemma 2.5]{JL21}}
Let $t > 1$ and $c > 0$ be integers.
Let $G$ be an abelian group of order $t^2c^2$.
A \emph{$(c,t-1)$ NLP-packing in $G$} is a partition of $G \setminus \{1_G\}$ into $t$ partial difference sets in $G$, of which
$t-1$ have cardinality $c(tc+1)$ and nonprincipal character sums $c$ or $-(t-1)c$,
and one has cardinality $(c-1)(tc+1)$ and nonprincipal character sums $c-1$ or $-tc+c-1$.
\end{definition}

\begin{lemma}{\rm{\cite[Lemma 6.4 and Remark 6.5(i)]{JL21}}}\label{Lemma:NLPtoPDSpacking}
Suppose that $D_0,D_1,\dots, D_{t-1}$ is a $(c,t-1)$ NLP-packing in an abelian group $G$ of order $t^2c^2$,
where $D_0$ is the exceptional subset.
Then $D_0, D_1, \dots, D_{t-1}$ is a $(-tc, t)$ partial difference set packing in~$G$ with respect to $(c-1,c,c,\dots,c)$.
\end{lemma}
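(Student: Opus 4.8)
The plan is to reduce both requirements of \cref{Defn:PDSpacking} to statements about nonprincipal character sums, exploiting the characterization in \cref{Prop:PDSchar}. Write $a_0 = c-1$ and $a_i = c$ for $1 \le i \le t-1$, and set $\delta = -tc$. The singleton conditions are immediate from the definition of a $(c,t-1)$ NLP-packing: the exceptional set $D_0$ is a partial difference set whose nonprincipal character sums are $c-1 = a_0$ or $-tc+c-1 = \delta + a_0$, and each $D_i$ with $i \ge 1$ is a partial difference set whose nonprincipal character sums are $c = a_i$ or $-(t-1)c = \delta + a_i$. Thus each $D_i$ already has the two nonprincipal character-sum values $a_i$ and $\delta + a_i$ demanded by \cref{Defn:PDSpacking}.

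The heart of the argument is the union condition, and the key observation is that the two candidate values $a_i$ and $a_i + \delta$ of $\chi(D_i)$ differ by the common constant $\delta = -tc$, independently of $i$. Fix a nonprincipal character $\chi$ of $G$, and let $S$ be the set of indices $i$ for which $\chi(D_i)$ attains the smaller value $a_i + \delta$. Because $D_0, \dots, D_{t-1}$ partition $G \setminus \{1_G\}$ and $\chi(G) = 0$,
\[
\sum_{i=0}^{t-1} \chi(D_i) = \chi(G \setminus \{1_G\}) = -\chi(1_G) = -1.
\]
On the other hand, $\sum_{i=0}^{t-1} \chi(D_i) = \sum_{i=0}^{t-1} a_i + |S|\,\delta = (tc-1) - |S|\,tc$, using $\sum_{i} a_i = (c-1) + (t-1)c = tc-1$. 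Equating the two expressions gives $tc\,(1 - |S|) = 0$, and since $tc \ne 0$ we conclude $|S| = 1$: for every nonprincipal character exactly one of the $t$ sets attains its smaller character-sum value.

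With this in hand the union condition follows at once. Let $I \subseteq \{0, \dots, t-1\}$, let $\chi$ be nonprincipal, and let $\{i^*\} = S$ be the unique small-value index for $\chi$. Then
\[
\chi\Big(\bigcup_{i \in I} D_i\Big) = \sum_{i \in I} \chi(D_i) =
\begin{cases}
\delta + \sum_{i \in I} a_i & \text{if } i^* \in I, \\
\sum_{i \in I} a_i & \text{if } i^* \notin I,
\end{cases}
\]
so the nonprincipal character sums of $\bigcup_{i \in I} D_i$ lie in the two-element set $\{\sum_{i \in I} a_i,\ \delta + \sum_{i \in I} a_i\}$. Since each $D_i$ satisfies $D_i = D_i^{(-1)}$ (being a regular partial difference set), the union is symmetric, so \cref{Prop:PDSchar} applies and identifies $\bigcup_{i \in I} D_i$ as a partial difference set with precisely these nonprincipal character sums. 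This is exactly the defining property of a $(-tc, t)$ partial difference set packing with respect to $(c-1, c, \dots, c)$, completing the proof.

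The main obstacle is spotting the uniform-gap phenomenon used in the second paragraph: the fact that each $\chi(D_i)$ deviates from $a_i$ by the same fixed amount $\delta = -tc$ is precisely what makes the total collapse to $-1$ and pins down $|S| = 1$. Once this is seen, everything else is bookkeeping, the only point requiring care being the appeal to $D_i = D_i^{(-1)}$ so that \cref{Prop:PDSchar} may be invoked for each union.
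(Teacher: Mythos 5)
This paper never proves \cref{Lemma:NLPtoPDSpacking} itself: the statement is imported from \cite{JL21}, so your proposal has to stand on its own rather than be measured against an in-paper argument. Its core does stand: the observation that $\sum_{i=0}^{t-1}\chi(D_i)=\chi(G\setminus\{1_G\})=-1$ for every nonprincipal $\chi$, combined with $\sum_i a_i = tc-1$ and the uniform gap $\delta=-tc$, forces exactly one index to attain its smaller value, so every union has nonprincipal character sums in the two-element set demanded by \cref{Defn:PDSpacking}. That counting argument is correct and is the essential idea behind the cited result.

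The gap is in your last step, and it is not the point you flagged as delicate. You conclude that each union $E=\bigcup_{i\in I}D_i$ is a partial difference set because \cref{Prop:PDSchar} ``applies'', but condition $(ii)$ of \cref{Prop:PDSchar} is not ``the nonprincipal character sums take two values'': it requires you to exhibit \emph{nonnegative integers} $\alpha,\beta$ satisfying the cardinality equation $\ell^2=\gamma+(\alpha-\beta)\ell+\beta v$ in addition to the character-sum formula, and you never produce these for $E$. This is a genuine constraint tying $|E|$ to the two character values and to $v$; the paper itself is careful about exactly this point, invoking \cref{Prop:SRG2}~(A2),(B2) alongside \cref{Prop:PDSchar} in the proof of \cref{Thm:PDS} to supply the missing relation. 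The repair is short but must be done. Writing $A=\sum_{i\in I}a_i$ and $B=A-tc$, the NLP-packing cardinalities give $|E|=(tc+1)A$, whence
\[
\bigl(|E|-A\bigr)\bigl(|E|-B\bigr)=(tcA)\cdot tc(A+1)=t^2c^2\,A(A+1)=v\bigl(|E|+AB\bigr),
\]
which is the required cardinality equation in the equivalent form $(\ell-A)(\ell-B)=v(\ell+AB)$ (valid since $1_G\notin E$), with $\beta=|E|+AB=A(A+1)$ a nonnegative integer and $\alpha=\beta+A+B=A^2+3A-tc$; nonnegativity of $\alpha$ follows because the same expression for a single nonempty $D_i$ is a difference count, hence $a_i^2+3a_i\ge tc$, and $x\mapsto x^2+3x$ is increasing with $A\ge a_i$. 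Alternatively you could bypass \cref{Prop:PDSchar} entirely using the standard group-ring/Fourier-inversion fact that a symmetric subset of $G\setminus\{1_G\}$ whose nonprincipal character sums take at most two values is automatically a partial difference set---but that fact is not stated in this paper, so it too would need proof. By contrast, the point you did flag, $D_i=D_i^{(-1)}$, is harmless: regularity is built into the definition in \cite{JL21}, and in any case symmetry follows from the character sums being real.
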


We now apply \cref{Thm:PDSpackingtoBSHM} to $(\delta,t)$ partial difference set packings drawn from the literature \cite{JL21,P,PDS} in order to produce infinite families of Hadamard matrices admitting a row decomposition so that the balanced splittable property holds simultaneously with respect to every union of the submatrices of the decomposition.
We believe this approach to the construction of balanced splittable Hadamard matrices (or equivalent objects) to be entirely new.

\begin{corollary}\label{Cor:PDSpacking}
Let $w,t$ be positive integers, 
and let $m$ and $a_1, \dots, a_t$ be integers. 
Let $I_1, \dots, I_w$ be a partition of $\{1,\dots,t\}$, and let $j \in \{1,\dots,w\}$.
Then, for the parameter values in each of the cases $(i)$ to $(v)$ displayed below, 
the order $2^r$ matrix $C(\Z_2^r)$ admits a row decomposition into 
submatrices $\begin{pmatrix} \vo^T \end{pmatrix}, H_1, \dots, H_w$ such that
$C(\Z_2^r)$ is simultaneously a
$\BSHM(2^r, \ell_j+1, \alpha_j +1, \delta +\alpha_j +1)$ 
with respect to~$\begin{pmatrix} \vo^T \\ H_j \end{pmatrix}$ and a
$\BSHM(2^r, \ell_u, \alpha_u, \delta +\alpha_u)$ 
with respect to $H_u$ for each $u \ne j$.
\begin{table}[ht!]
\scriptsize
\begin{center}
$
\begin{array}{|rl|c|c|c|c|c|c|} 		
\hline
        & 				& r	& t	& (a_1,\dots,a_t)		& \alpha_u 		& \ell_u 		& \delta 	\\ \hline
(i)	& m \ge 1			& 2m	& 2^m+1	& (-1,\dots,-1)			& -|I_u|		& (2^m-1)|I_u|		& 2^m 		\\ 
(ii)	& m = 0 \mbox{ or } m \ge 2 	& 2m+4	& 4	& (2^m-1,2^m,2^m,2^m)		& \sum_{i \in I_u} a_i	& (2^{m+2}+1)\alpha_u	& -2^{m+2} 	\\
(iii)	& m \ge 3		 	& 2m+4	& 4	& (2^m+2,2^m-1,2^m-1,2^m-1)	& \sum_{i \in I_u} a_i	& (2^{m+2}+1)\alpha_u	& -2^{m+2} 	\\
(iv)	& m \ge 3		 	& 2m+4	& 4	& (2^m,2^m+1,2^m-1,2^m-1)	& \sum_{i \in I_u} a_i	& (2^{m+2}+1)\alpha_u	& -2^{m+2} 	\\ 
(v)	& 				& 6	& 3	& (2,2,3)			& \sum_{i \in I_u} a_i	& 9 \alpha_u		& -8		\\ [0.5ex] \hline
\end{array}
$
\end{center}
\normalsize
\end{table}
\end{corollary}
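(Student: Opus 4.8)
The plan is to reduce the corollary entirely to \cref{Thm:PDSpackingtoBSHM}. For each of the five cases I would exhibit a $(\delta,t)$ partial difference set packing of $\Z_2^r$ with respect to $(a_1,\dots,a_t)$ realizing the tabulated $r$, $t$, $(a_1,\dots,a_t)$, and $\delta$, with block sizes $|D_i|$ arranged so that $\ell_u=\sum_{i\in I_u}|D_i|$ matches the stated formula. Once such a packing is in hand, \cref{Thm:PDSpackingtoBSHM}, applied to the given partition $I_1,\dots,I_w$ and the chosen index $j$, yields the asserted simultaneous balanced splittable property verbatim with $\alpha_u=\sum_{i\in I_u}a_i$. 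No further manipulation of Hadamard matrices is then required: the whole corollary is pushed onto the existence and verification of the five packings.

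For the first two families I would invoke the packings of \cite{JL21} through \cref{Lemma:LPtoPDSpacking,Lemma:NLPtoPDSpacking}. In case $(i)$, a $(1,2^m)$ LP-packing in $\Z_2^{2m}$ gives, by \cref{Lemma:LPtoPDSpacking}, a $(2^m,\,2^m+1)$ partial difference set packing with respect to $(-1,\dots,-1)$ whose every block has cardinality $2^m-1$; substituting $\delta=2^m$ and $t=2^m+1$ recovers $\alpha_u=-|I_u|$ and $\ell_u=(2^m-1)|I_u|$. In case $(ii)$, a $(2^m,3)$ NLP-packing in $\Z_2^{2m+4}$ gives, by \cref{Lemma:NLPtoPDSpacking}, a $(-2^{m+2},\,4)$ packing with respect to $(2^m-1,2^m,2^m,2^m)$ whose block sizes are $(2^{m+2}+1)a_i$, recovering $\ell_u=(2^{m+2}+1)\alpha_u$. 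In both cases the verification is mechanical substitution, and a convenient sanity check is the identity $\delta+\sum_i a_i=-1$, which reflects that the full union $\bigcup_i D_i=\Z_2^r\setminus\{1_{\Z_2^r}\}$ has every nonprincipal character sum equal to $-1$.

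The substance of the argument lies in cases $(iii)$, $(iv)$, and $(v)$, whose value vectors $(2^m+2,2^m-1,2^m-1,2^m-1)$, $(2^m,2^m+1,2^m-1,2^m-1)$, and $(2,2,3)$ are not of the uniform LP/NLP shape $(c-1,c,\dots,c)$; for these I would draw the packings from \cite{P,PDS}. The main obstacle is to verify the full packing condition of \cref{Defn:PDSpacking}: not merely that each individual $D_i$ is a partial difference set with the stated character sums, but that \emph{every} union $\bigcup_{i\in I}D_i$ is again a partial difference set with nonprincipal character sums $\sum_{i\in I}a_i$ or $\delta+\sum_{i\in I}a_i$. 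I would establish this at the level of characters via \cref{Prop:PDSchar}, showing that on each nonprincipal $\chi$ the block sums $\chi(D_i)$ lie in $\{a_i,\delta+a_i\}$ and that, as $\chi$ varies, the choices are correlated so that each partial sum $\sum_{i\in I}\chi(D_i)$ collapses to exactly one of $\sum_{i\in I}a_i$ or $\delta+\sum_{i\in I}a_i$. For the sporadic case $(v)$ in $\Z_2^6$ I would exhibit the three blocks of sizes $18,18,27$ explicitly (or through the geometric structure underlying the construction in \cite{P,PDS}) and confirm all $63$ nonprincipal character sums directly, while cases $(iii)$ and $(iv)$ follow from the corresponding parametrized families in the same references by identical character-sum bookkeeping. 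A final application of \cref{Thm:PDSpackingtoBSHM} completes each case.
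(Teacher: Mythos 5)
Your proposal is correct and follows essentially the same route as the paper: reduce everything to \cref{Thm:PDSpackingtoBSHM}, obtain case $(i)$ from a $(1,2^m)$ LP-packing via \cref{Lemma:LPtoPDSpacking}, case $(ii)$ from a $(2^m,3)$ NLP-packing via \cref{Lemma:NLPtoPDSpacking}, and cases $(iii)$--$(v)$ from the packings in \cite{P,PDS} verified through \cref{Prop:PDSchar}. The paper simply pins down the exact citations ({\cite[Theorem~5.3]{JL21}}, {\cite[Corollary 6.13~$(i)$]{JL21}}, {\cite[Corollaries 6.1, 6.2]{P}} with a clarification from \cite{Ppc}, and {\cite[Example 3.1]{PDS}}) rather than re-deriving the character-sum correlations you outline.
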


\begin{proof}
For each of the displayed sets of parameter values, we provide a $(\delta,t)$ partial difference set packing in $\Z_2^r$ with respect to $(a_1,\dots,a_t)$, 
calculate $\alpha_u = \sum_{i \in I_u} a_i$ and $\ell_u = \sum_{i \in I_u} |D_i|$, 
and apply \cref{Thm:PDSpackingtoBSHM}.

\begin{enumerate}[$(i)$]
\item 
By \cite[Theorem~5.3]{JL21}, 
there is a $(1,2^m)$ 
LP-packing in $\Z_2^{2m}$ relative to $\Z_2^m$. 
Therefore by \cref{Lemma:LPtoPDSpacking}, there is a $(2^m,2^m+1)$ 
partial difference set packing $D_1,\dots,D_{2^m+1}$ in $\Z_2^{2m}$ with respect to $(a_1,\dots,a_{2^m+1}) = (-1,-1,\dots,-1)$, where 
$|D_i| = 2^m-1$ 
for each~$i$.
Then $\alpha_u = \sum_{i \in I_u} a_i = -|I_u|$ for each~$u$ and
$\ell_u = \sum_{i \in I_u} |D_i| = (2^m-1) |I_u|$.

\item 
By \cite[Corollary 6.13~$(i)$]{JL21}, 
there is an $(2^m,3)$ 
NLP-packing in $\Z_2^{2m+4}$.
Therefore by \cref{Lemma:NLPtoPDSpacking}, there is a $(-2^{m+2},4)$ 
partial difference set packing $D_1,D_2,D_3,D_4$ in $\Z_2^{2m+4}$ with respect to $(a_1,a_2,a_3,a_4) = (2^m-1,2^m,2^m,2^m)$, where 
$|D_i| = (2^{m+2}+1)a_i$ for each~$i$.
Then $\ell_u = \sum_{i \in I_u} |D_i| = (2^{m+2}+1) \alpha_u$ for each~$u$.

\item 
By \cite[Corollary 6.1 and page 276 lines 1--3]{P} 
and \cref{Prop:PDSchar},
there is a $(-2^{m+2},4)$ partial difference set packing $D_1,D_2,D_3,D_4$ in $\Z_2^{2m+4}$ 
with respect to $(a_1,a_2,a_3,a_4)$ $= (2^m+2,2^m-1,2^m-1,2^m-1)$, where
$|D_i| = (2^{m+2}+1)a_i$ for each~$i$.
Then $\ell_u 
= (2^{m+2}+1) \alpha_u$ for each~$u$.

\item 
By \cite[Corollary 6.2 and page 277 lines 1--3]{P} (and a clarification provided in \cite{Ppc}) 
and \cref{Prop:PDSchar},
there is a $(-2^{m+2},4)$ partial difference set packing $D_1,D_2,D_3,D_4$ in $\Z_2^{2m+4}$ 
with respect to $(a_1,a_2,a_3,a_4) = (2^m,2^m+1,2^m-1,2^m-1)$, where
$|D_i| = (2^{m+2}+1) a_i$ for each~$i$.
Then $\ell_u 
= (2^{m+2}+1) \alpha_u$ for each~$u$.

\item 
By \cite[Example 3.1]{PDS} and \cref{Prop:PDSchar},  
there is a $(-8,3)$ partial difference set packing $D_1,D_2,D_3$ in $\Z_2^6$ 
with respect to $(a_1,a_2,a_3) = (2,2,3)$, where
$|D_i| = 9 a_i$ for each~$i$.
Then $\ell_u = 9 \alpha_u$ for each~$u$.
\qedhere
\end{enumerate}

\end{proof}

\begin{remark}
\mbox{}
\begin{enumerate}[$(i)$]
\item
The construction of \cref{Res:twin}, involving a Sylvester-type Hadamard matrix of order $2^{2m}$,
occurs as the special case $w = 3$, $j=1$, $|I_1| = 1$, $|I_2| = |I_3| = 2^{m-1}$ of \cref{Cor:PDSpacking}~$(i)$.

\item
The result of \cref{Cor:PDS}~$(i)$ (which is obtained from \cite[Example 2.3.3]{Ma}) is implied by the result of
\cref{Cor:PDSpacking}~$(i)$ with respect to a single submatrix~$H_u$, by taking $|I_u| = s$ for $u \ne j$. 
\end{enumerate}

\end{remark}

\end{subsection}

\end{section}

\begin{section}{Constructions and restrictions for the imprimitive case}\label{Sec:imprimitive}

In this section, we develop the study of imprimitive balanced splittable Hadamard matrices.
\cref{Prop:imprimitiveb=-a,Prop:imprimitivebne-a} show that, up to application of the switching transformation \eqref{Eq:switching} and interchange of $a,b$ and (for $b=-a$) negation of columns, we need consider only two cases for an imprimitive balanced splittable Hadamard matrix~$H$:
\begin{enumerate}[$(i)$]
    \item $H$ is a Type~1 $\BSHM(4rs,4s-1,4s-1,-1)$ for some integers $r \ge 2$, $s \ge 1$
    \item $H$ is a Type~2 $\BSHM(8rs,4s,4s,0)$ for some integers $r, s \ge 1$.
\end{enumerate}
We examine case $(ii)$ in \cref{Subsec:8rs}, using a Kronecker product construction to obtain new infinite families.
We examine case $(i)$ in \cref{Subsec:4rs}, using a connection to case $(ii)$ to construct new infinite families and then using structural constraints to further restrict the possible parameter values when $r$ is odd.

\begin{subsection}{Type 2 BSHM$(8rs,4s,4s,0)$}\label{Subsec:8rs}

Recall the notation $A \otimes B$ for the Kronecker product of matrices $A$ and $B$ (see \cref{Subsec:KS}).

\begin{proposition}\label{Prop:Structure}
Suppose $H = \begin{pmatrix}
    H_1 \\
    H_2
    \end{pmatrix}$ is a $\BSHM(8rs,4s,4s,0)$ with respect to~$H_1$.
Then there exists a Hadamard matrix $L$ of order $4s$, and the columns of $H$ can be reordered so that $H_1 = \vo^T \otimes L$.
\end{proposition}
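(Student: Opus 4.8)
The plan is to exploit the equality $a = \ell = 4s$, which is the defining feature of this parameter class. Each column $\vc_i$ of $H_1$ is a vector in $\{1,-1\}^{4s}$, so $\vc_i \cdot \vc_i = 4s$; and for two $\pm1$ vectors of length $4s$, the dot product equals $4s$ if and only if the vectors are equal. Since the dot product of distinct columns of $H_1$ lies in $\{a,b\} = \{4s,0\}$, I would first deduce that any two distinct columns of $H_1$ are either identical (dot product $4s$) or orthogonal (dot product $0$). Consequently the set of distinct vectors occurring among the columns is pairwise orthogonal, and each such vector has squared norm $4s$.

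Next I would count these distinct vectors and their multiplicities. Let $\vv_1, \dots, \vv_d$ be the distinct column vectors, with $\vv_k$ occurring $m_k$ times, so $\sum_k m_k = 8rs$. Row orthogonality of $H$ gives $H_1 H_1^T = n I_{4s} = 8rs\, I_{4s}$ by \eqref{Eq:MMT}, and since $H_1 H_1^T = \sum_i \vc_i \vc_i^T = \sum_{k=1}^d m_k \vv_k \vv_k^T$ has rank $d$ (the orthogonal vectors $\vv_k$ being linearly independent) while $8rs\, I_{4s}$ has rank $4s$, I obtain $d = 4s$, so the $\vv_k$ form an orthogonal basis of $\R^{4s}$. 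Applying the identity $\sum_k m_k \vv_k \vv_k^T = 8rs\, I_{4s}$ to $\vv_j$ (using orthogonality and $\|\vv_j\|^2 = 4s$) yields $m_j (4s)^2 = 8rs \cdot 4s$, hence $m_j = 2r$ for every $j$. Thus there are exactly $4s$ distinct columns, each occurring with multiplicity $2r$.

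Finally I would assemble the conclusion. Arranging the $4s$ distinct vectors $\vv_1, \dots, \vv_{4s}$ as the columns of a $4s \times 4s$ matrix $L$ over $\{1,-1\}$ gives $L^T L = 4s\, I_{4s}$ by orthogonality and equal norms, so $L$ is a Hadamard matrix of order $4s$ as claimed. Because each of these $4s$ columns occurs exactly $2r$ times in $H_1$, I can permute the $8rs$ columns of $H$ into $2r$ consecutive blocks, each block being a copy of $L$; this gives $H_1 = (L \mid L \mid \cdots \mid L) = \vo^T \otimes L$, where $\vo$ has length $2r$.

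The main obstacle — really the only genuine content — is the opening observation that $a = \ell$ collapses the ``dot product $=a$'' condition into exact equality of columns; once this is in place, the remainder is a short rank-and-trace computation. I would also note in passing that no vector and its negation can both appear, since their dot product would be $-4s \notin \{4s,0\}$; this is in any case subsumed by the orthogonality of the distinct columns.
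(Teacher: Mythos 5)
Your proof is correct, and it reaches the conclusion by a genuinely different route for the key counting step. Both arguments begin with the same crucial observation — since $a=\ell=4s$, two $\pm 1$ columns of length $4s$ with dot product $4s$ are equal, so the distinct column vectors of $H_1$ are pairwise orthogonal — and both end the same way (the distinct columns form a Hadamard matrix $L$, and a column permutation gives $H_1 = \vo^T \otimes L$). The difference is how you count the distinct columns and their multiplicities. The paper invokes \cref{Prop:SRG2}~(B4) with $(a,b)=(4s,0)$ to get $k_{4s}=2r-1$, i.e.\ each column is identical to exactly $2r-1$ others (via \cref{Defn:kai}), whence there are $8rs/2r = 4s$ column types of multiplicity $2r$. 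You instead work directly from row orthogonality, $H_1H_1^T = 8rs\, I_{4s}$ in \eqref{Eq:MMT}: writing $H_1H_1^T = \sum_k m_k \vv_k\vv_k^T$ over the distinct columns, the rank comparison forces exactly $4s$ types, and applying the identity to each $\vv_j$ forces $m_j = 2r$. Your version is more elementary and self-contained: it does not rely on the strongly regular graph machinery, and in particular it sidesteps the question (implicit in the paper's appeal to (B4)) of verifying that $H$ has Type~2 rather than Type~1 for these parameters. What the paper's route buys is brevity, since \cref{Prop:SRG2} has already been established and hands over the constant degree $k_a$ immediately; what yours buys is independence from that proposition and a transparent linear-algebra mechanism (rank plus an eigenvector evaluation) for the multiplicity count.
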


\begin{proof} 
By \cref{Prop:SRG2}~(B4) with $(a,b)=(4s,0)$, we have $k_{4s} = 2r-1$. 
Therefore by \cref{Defn:kai} each column of $H_1$ has length $4s$ and is identical to (has dot product $4s$ with) exactly $2r-1$ other columns of~$H_1$, and is orthogonal to (has dot product $0$ with) all other columns of~$H_1$. Therefore $H_1$ has exactly $\frac{8rs}{2r} = 4s$ distinct column types $\vc_1, \dots, \vc_{4s}$, each occurring with multiplicity~$2r$. The columns $\vc_1, \dots, \vc_{4s}$ are mutually orthogonal and so form a Hadamard matrix $L$ of order $4s$, and the columns of $H$ can be reordered so that 
$H_1 = \vo^T \otimes L 
     = \underbrace{\begin{pmatrix} L & L & \dots & L \end{pmatrix}}_{2r}$.
\end{proof}

The following construction modifies the Kronecker product construction used to establish \cref{Res:Constructions}~$(iii)$. 

\begin{proposition}\label{Prop:Product}
Suppose there exists a $\BSHM(n,\ell,\ell,0)$ and a Hadamard matrix of order~$m$. Then there exists a $\BSHM(nm,\ell m,\ell m,0)$.
\end{proposition}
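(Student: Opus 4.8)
The plan is to realise the desired matrix as a Kronecker product, modifying the construction behind \cref{Res:Constructions}~$(iii)$. Let $H = \begin{pmatrix} H_1 \\ H_2 \end{pmatrix}$ be the given $\BSHM(n,\ell,\ell,0)$ with respect to its $\ell \times n$ submatrix $H_1$, and let $M$ be a Hadamard matrix of order~$m$. I would take $H \otimes M$ as the ambient matrix and single out $H_1 \otimes M$ as its distinguished submatrix.

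First I would confirm that $H \otimes M$ is a Hadamard matrix of order $nm$: its entries lie in $\{1,-1\}$, and
$(H \otimes M)^T (H \otimes M) = (H^T H) \otimes (M^T M) = (n I_n) \otimes (m I_m) = nm\, I_{nm}$. Since $H \otimes M = \begin{pmatrix} H_1 \otimes M \\ H_2 \otimes M \end{pmatrix}$, the matrix $H_1 \otimes M$ is precisely the top $\ell m \times nm$ block, and $1 \le \ell m \le nm-1$ because $1 \le \ell \le n-1$ and $m \ge 1$.

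The crux is the dot products of distinct columns of $H_1 \otimes M$. Writing $\vc_i$ for the $i^{\text{th}}$ column of $H_1$ and $\vu_k$ for the $k^{\text{th}}$ column of $M$, the columns of $H_1 \otimes M$ are the vectors $\vc_i \otimes \vu_k$, and multiplicativity of the dot product under the Kronecker product gives $(\vc_i \otimes \vu_k) \cdot (\vc_j \otimes \vu_l) = (\vc_i \cdot \vc_j)(\vu_k \cdot \vu_l)$. Orthogonality of the columns of $M$ gives $\vu_k \cdot \vu_l = m$ if $k = l$ and $0$ otherwise, while $\vc_i \cdot \vc_j \in \{\ell, 0\}$ for $i \ne j$ and $\vc_i \cdot \vc_i = \ell$. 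A short case split on whether $k = l$ then shows that for any two distinct columns indexed by $(i,k) \ne (j,l)$, the dot product is $0$ whenever $k \ne l$, and equals $(\vc_i \cdot \vc_j)\,m \in \{\ell m, 0\}$ when $k = l$ (which forces $i \ne j$). Hence every such dot product lies in $\{\ell m, 0\}$, and $H \otimes M$ is a $\BSHM(nm, \ell m, \ell m, 0)$ with respect to $H_1 \otimes M$.

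I do not anticipate a genuine obstacle: the construction is a plain Kronecker product and the two-valued property is inherited simply because the dot product factorises across the tensor product. The only point requiring care is the case analysis verifying that no third dot-product value can arise, together with the observation that both values $\ell m$ and $0$ are actually attained (the latter following from $b \ne a$ in the original matrix by \cref{Res:KS}, so that both $\ell$ and $0$ already occur among dot products of distinct columns of $H_1$).
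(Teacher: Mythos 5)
Your proof is correct and takes essentially the same approach as the paper: both form the Kronecker product $H \otimes M$ and verify the balanced splittable property with respect to the submatrix $H_1 \otimes M$. The only cosmetic difference is that the paper carries out the verification at the level of Gram matrices, computing $(H_1 \otimes M)^T(H_1 \otimes M) = \ell(I_n + A) \otimes mI_m$ via \cref{Rem:A}, whereas you compute the same dot products column by column using multiplicativity of the dot product over the Kronecker product.
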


\begin{proof}
Let $H$ be a $\BSHM(n,\ell,\ell,0)$ with respect to a submatrix~$H_1$, and let
$K$ be an order $m$ Hadamard matrix. 
We shall show that $H \otimes K$ is a $\BSHM(nm,\ell m,\ell m,0)$ with respect to
$H_1 \otimes K$.

The matrix $H_1 \otimes K$ is an
$\ell m \times nm$ submatrix of the order $nm$ Hadamard matrix $H \otimes K$.
By \cref{Rem:A}, we have
$H_1^TH_1 = \ell (I_n + A)$, where $A$ is a symmetric matrix over $\{0,1\}$ with zero diagonal.
Therefore
\begin{align*}
(H_1 \otimes K)^T
(H_1 \otimes K)
  &=\ell (I_{n}+A) \otimes m I_m,
\end{align*}
and all off-diagonal entries of the symmetric matrix $(I_{n}+A) \otimes I_m$ lie in $\{0,1\}$.
\end{proof}

\begin{theorem}\label{Thm:b=0}
There exists a $\BSHM(8rs,4s,4s,0)$ in each of the following cases:
\begin{enumerate}[$(i)$]
\item
there exist Hadamard matrices of order $2r$ and $4s$

\item
there exist Hadamard matrices of order~$4r$ and~$2s$.
\end{enumerate}
\end{theorem}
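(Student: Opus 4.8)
The plan is to realize a $\BSHM(8rs,4s,4s,0)$ as a Kronecker product, guided by the structural description in \cref{Prop:Structure}: any such matrix has $H_1=\vo^T\otimes L$ for a Hadamard matrix $L$ of order $4s$, with the all-ones vector of length $2r$, so the governing invariant is the ratio $n/\ell=2r$. Both hypotheses provide two Hadamard orders, but they differ in which order is directly available, and this dictates two routes. The common thread is that the parameter shape $(n,\ell,\ell,0)$ is exactly what \cref{Prop:Product} multiplies, so the strategy is to produce a base matrix of this shape with the correct ratio $2r$ and then scale it.

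For case $(i)$, where Hadamard matrices of orders $2r$ and $4s$ exist, I would apply \cref{Res:Constructions}~$(iii)$ directly, taking its two orders to be $4s$ and $2r$. This yields a $\BSHM(4s\cdot 2r,\,4s,\,4s,\,0)=\BSHM(8rs,4s,4s,0)$, and the requirement $4s\ge 2$ holds automatically; this case is immediate.

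For case $(ii)$, where Hadamard matrices of orders $4r$ and $2s$ exist, the direct route fails because the required ratio $n/\ell=2r$ cannot be obtained from \cref{Res:Constructions}~$(iii)$ using the given orders: that result generates the ratio $2r$ only from an order-$2r$ Hadamard matrix, whereas we are given order $4r$, and $2r$ need not be a Hadamard order (for instance when $r$ is odd and $r>1$, so that $2r\equiv 2\pmod 4$). The resolution is to extract the ratio $2r$ from the order-$4r$ matrix by descending to the minimal height $\ell=2$: since $4r\ge 4$, \cref{Cor:ell2}~$(ii)$ converts the order-$4r$ Hadamard matrix into a $\BSHM(4r,2,2,0)$, whose ratio is $4r/2=2r$ and whose shape is $(n,\ell,\ell,0)$. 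Applying \cref{Prop:Product} to this base together with the order-$2s$ Hadamard matrix then scales $\ell$ from $2$ up to $4s$ while preserving the ratio, giving a $\BSHM(4r\cdot 2s,\,2\cdot 2s,\,2\cdot 2s,\,0)=\BSHM(8rs,4s,4s,0)$.

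The main obstacle is conceptual rather than computational: one must see that case $(ii)$ cannot be settled by the single Kronecker product that handles case $(i)$, precisely because \cref{Res:Constructions}~$(iii)$ can realize the ratio $2r$ only from an order-$2r$ Hadamard matrix, which halving $4r$ need not supply. The idea that unblocks this is that a $\BSHM$ of shape $(n,\ell,\ell,0)$ with $\ell=2$ already encodes the ratio $2r$ out of an order-$4r$ Hadamard matrix, after which \cref{Prop:Product} performs only routine scaling; I anticipate no further difficulty, since both $n/\ell$ and the shape $(n,\ell,\ell,0)$ are preserved under that multiplication.
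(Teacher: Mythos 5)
Your proposal is correct and follows essentially the same route as the paper: case $(i)$ is the direct application of \cref{Res:Constructions}~$(iii)$ to the orders $2r$ and $4s$, and case $(ii)$ first produces a $\BSHM(4r,2,2,0)$ from the order-$4r$ Hadamard matrix and then scales by the order-$2s$ Hadamard matrix via \cref{Prop:Product}. The only cosmetic difference is that you cite \cref{Cor:ell2}~$(ii)$ for the $\BSHM(4r,2,2,0)$ where the paper cites \cref{Res:Constructions}~$(v)$, but the paper itself identifies these as restatements of the same result.
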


\begin{proof}
\mbox{}
\begin{enumerate}[$(i)$]
\item
Apply \cref{Res:Constructions}~$(iii)$ to Hadamard matrices of order $2r$ and~$4s$.

\item
Apply \cref{Res:Constructions}~$(v)$ to a Hadamard matrix of order $4r$ to obtain a $\BSHM(4r,2,2,0)$.
Combine this with a Hadamard matrix of order~$2s$ using \cref{Prop:Product}.
\qedhere
\end{enumerate}
\end{proof}

\begin{remark}
All parameter sets for which a $\BSHM(8rs,4s,4s,0)$ is known to exist are constructed in \cref{Thm:b=0}. Those in $(i)$ were previously known from \cref{Res:Constructions}~$(iii)$, whereas those in $(ii)$ are new.
\end{remark}

\cref{Tab:ellzero} shows the parameter sets for which the existence of a $\BSHM(8rs,4s,4s,0)$ is not determined by \cref{Thm:b=0}, for $r , s \le 8$.

\begin{table}[ht!]
\caption{Open cases for a $\BSHM(8rs,4s,4s,0)$ with $r,s \le 8$.}
\begin{center}
\begin{tabular}{|c|c|c|} 		
\hline
$r$	& $s$	& $(8rs,4s,4s,0)$ \\ \hline
3	& 3	& $( 72,12,12,0)$ \\ 
5	& 3	& $(120,12,12,0)$ \\ 
3	& 5	& $(120,20,20,0)$ \\ 
7	& 3	& $(168,12,12,0)$ \\ 
3	& 7	& $(168,28,28,0)$ \\ 
5	& 5	& $(200,20,20,0)$ \\ 
7	& 5	& $(280,20,20,0)$ \\ 
5	& 7	& $(280,28,28,0)$ \\ 
7	& 7	& $(392,28,28,0)$ \\ \hline
\end{tabular}
\end{center}
\label{Tab:ellzero}
\end{table}

Assuming the Hadamard matrix conjecture holds (see \cref{Sec:intro}), 
\cref{Thm:b=0} gives the following result.

\begin{corollary} 
Assume the Hadamard matrix conjecture holds. Then there exists a $\BSHM(8rs,4s,4s,0)$ for all positive integers $r,s$ except possibly when $r,s$ are both odd and greater than~$1$.
\end{corollary}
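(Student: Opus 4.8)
The plan is to derive the corollary directly from \cref{Thm:b=0} by a short case analysis on the parities of $r$ and $s$, using the Hadamard matrix conjecture to control exactly which Hadamard orders are available. Under the conjecture, a Hadamard matrix of order $m$ exists precisely when $m \in \{1,2\}$ or $m \equiv 0 \pmod{4}$. In particular, since $4 \mid 4s$ and $4 \mid 4r$ for all positive integers $r,s$, Hadamard matrices of orders $4s$ and $4r$ always exist, so the availability of each branch of \cref{Thm:b=0} is governed solely by the remaining required order.

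First I would record when each branch can be invoked. Branch $(i)$ additionally needs a Hadamard matrix of order $2r$, which under the conjecture exists if and only if $2r \in \{1,2\}$ or $4 \mid 2r$, that is, if and only if $r = 1$ or $r$ is even. Symmetrically, branch $(ii)$ additionally needs a Hadamard matrix of order $2s$, which exists if and only if $s = 1$ or $s$ is even.

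Next I would combine these observations. If $r = 1$ or $r$ is even, then branch $(i)$ supplies a $\BSHM(8rs,4s,4s,0)$ for every $s$. If instead $r$ is odd with $r > 1$, then branch $(i)$ is unavailable, because $r$ odd forces $4 \nmid 2r$ while $2r \ge 6 > 2$; in this situation I would fall back on branch $(ii)$, which succeeds whenever $s = 1$ or $s$ is even. The only pairs $(r,s)$ that escape both branches are thus those with $r$ and $s$ both odd and both greater than $1$, which is exactly the exceptional set named in the statement.

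I expect no genuine obstacle here: the argument is clean parity bookkeeping, and the ``except possibly'' clause simply records the parameters for which neither sufficient condition of \cref{Thm:b=0} can be met under the conjecture. The only step deserving care is confirming that the two parity conditions jointly cover every case outside the claimed exceptional set; the dichotomy on $r$ (namely $r=1$ or $r$ even, versus $r$ odd with $r>1$) makes this transparent and leaves branch $(ii)$ to handle the second alternative.
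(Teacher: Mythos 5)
Your proposal is correct and follows essentially the same route as the paper: the corollary is exactly an application of \cref{Thm:b=0}, using the Hadamard matrix conjecture to decide when the required orders $2r$, $4r$, $2s$, $4s$ exist, and your parity analysis (branch $(i)$ covers $r=1$ or $r$ even; branch $(ii)$ covers $s=1$ or $s$ even; both fail precisely when $r,s$ are odd and exceed $1$) is the intended argument, which the paper leaves implicit.
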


\end{subsection}

\begin{subsection}{Type 1 BSHM$(4rs,4s-1,4s-1,-1)$}\label{Subsec:4rs}

We begin with a structural result similar to that of \cref{Prop:Structure}.

\begin{proposition}\label{Prop:Structure2}
    Suppose $H = \begin{pmatrix} H_1 \\ H_2 \end{pmatrix}$ 
is a $\BSHM(4rs,4s-1,4s-1,-1)$ with respect to~$H_1$.  
Then there exists a Hadamard matrix $\begin{pmatrix}\vo^T \\ L \end{pmatrix}$ of order~$4s$, and the columns of $H$ can be reordered so that $H_1 = \vo^T \otimes L$.
\end{proposition}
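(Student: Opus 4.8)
The plan is to follow the same template as the proof of \cref{Prop:Structure}, exploiting the special feature of the Type~1 imprimitive parameters that the ``large'' dot product $a$ coincides with the row count $\ell$. The columns of $H_1$ have length $\ell = 4s-1$ over $\{1,-1\}$, and the first observation is that two such columns have dot product $a = 4s-1 = \ell$ precisely when they are equal: $\ell$ is the maximum possible dot product of two $\{1,-1\}$ vectors of length $\ell$, and it is attained only by identical vectors. Thus in the associated graph $G$ (\cref{Def:assgraph}), adjacency of vertices $i$ and $j$ is exactly the relation ``column $i$ equals column $j$''.

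Next I would pin down the common value $k_a$ of $k_a(i)$ from \cref{Defn:kai}, which is independent of $i$ by \cref{Prop:SRG2}. Substituting $(a,b) = (4s-1,-1)$ and $n = 4rs$ into the formula $k_a = \frac{\ell-b}{b-a} + \frac{nb}{b-a}$ of \cref{Prop:SRG2}~(A4) yields $k_a = r-1$. Since equality of columns is an equivalence relation, the columns of $H_1$ partition into classes of mutually equal columns, and the constancy $k_a(i) = r-1$ forces each class to have size exactly $r$ (each column equals itself together with exactly $r-1$ others). Hence $H_1$ has exactly $4rs/r = 4s$ distinct column types $\vc_1,\dots,\vc_{4s}$, each occurring with multiplicity $r$; and any two distinct types $\vc_i,\vc_j$ are nonadjacent in $G$, so their dot product takes the only remaining value $b = -1$. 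This is consistent with the identification $G = 4sK_r$ from \cref{Prop:imprimitivebne-a}~$(i)$.

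Then I would assemble the Hadamard matrix. Let $L$ be the $(4s-1)\times 4s$ matrix with columns $\vc_1,\dots,\vc_{4s}$, and form the $4s\times 4s$ matrix $\begin{pmatrix}\vo^T \\ L\end{pmatrix}$ over $\{1,-1\}$. For distinct $i,j$ the dot product of its columns $i$ and $j$ is $1 + \vc_i\cdot\vc_j = 1 + (-1) = 0$, while each column has squared norm $1 + \vc_i\cdot\vc_i = 1 + (4s-1) = 4s$; therefore $\begin{pmatrix}\vo^T \\ L\end{pmatrix}$ has mutually orthogonal columns of norm $\sqrt{4s}$ and is a Hadamard matrix of order $4s$, establishing the first claim. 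Finally, reordering the columns of $H$ so that the $4s$ distinct types appear in the fixed order $\vc_1,\dots,\vc_{4s}$ and each block of $r$ equal columns is grouped, gives $H_1 = \begin{pmatrix} L & L & \cdots & L \end{pmatrix}$ with $r$ blocks, i.e. $H_1 = \vo^T \otimes L$ with $\vo^T$ of length $r$, as required.

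I do not anticipate a serious obstacle: the whole argument hinges on the single observation that $a = \ell$ converts combinatorial adjacency into literal column equality, after which everything runs parallel to \cref{Prop:Structure}. The only point demanding a little care is the bookkeeping confirming that each equivalence class has size exactly $r$ (using that $k_a$ counts neighbours \emph{other than} the vertex itself) and hence that there are $4s$ classes; this is immediate once the equivalence-relation structure is noted.
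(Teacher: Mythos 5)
Your proof is correct and follows essentially the same route as the paper's: both rest on $k_a = r-1$ from \cref{Prop:SRG2}~(A4), deduce that $H_1$ has exactly $4s$ column types each of multiplicity $r$, and append the all-ones row to obtain the order-$4s$ Hadamard matrix $\begin{pmatrix}\vo^T \\ L\end{pmatrix}$. The only (cosmetic) difference is that the paper forms $H_1' = \begin{pmatrix}\vo^T \\ H_1\end{pmatrix}$ at the outset so that column dot products lie in $\{4s,0\}$, whereas you work with $H_1$ directly via the observation that dot product $a=\ell$ forces literal column equality, and adjoin $\vo^T$ only at the end.
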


\begin{proof}
We are given that $(a,b) = (4s-1,-1)$, and so the dot product of every two distinct columns of the $4s \times 4rs$ matrix $H'_1 = \begin{pmatrix}\vo^T \\ H_1 \end{pmatrix}$ lies in $\{4s,0\}$.
By \cref{Prop:SRG2}~(A4) we have $k_{4s-1} = r-1$, and therefore each column of $H'_1$ is identical to (has dot product $4s$ with) exactly $r-1$ other columns of~$H'_1$, and is orthogonal to (has dot product $0$ with) all other columns of~$H'_1$. 
Therefore $H'_1$ has exactly $\frac{4rs}{r} = 4s$ distinct column types $\vc_1, \dots, \vc_{4s}$, each occurring with multiplicity~$r$. The columns $\vc_1, \dots, \vc_{4s}$ are mutually orthogonal and so form a Hadamard matrix 
$\begin{pmatrix}\vo^T \\ L \end{pmatrix}$ of order~$4s$, and the columns of 
$\begin{pmatrix} H'_1 \\ H_2 \end{pmatrix}$ can be reordered so that 
$H'_1 = \vo^T \otimes \begin{pmatrix} \vo^T \\ L \end{pmatrix}
= \begin{pmatrix} \vo^T \\ \vo^T \otimes L \end{pmatrix}$.
Therefore the columns of $H$ can be reordered so that 
$H_1 = \vo^T \otimes L
     = \underbrace{ \begin{pmatrix} L & L & \dots & L \end{pmatrix}}_r$.
\end{proof}

We now consider the construction of a $\BSHM(4rs,4s-1,4s-1,-1)$. We shall see that the cases $r$ even and $r$ odd behave differently.
We begin with a relation between a $\BSHM(4rs,4s,4s,0)$ and a $\BSHM(4rs,4s-1,4s-1,-1)$, making use of \cref{Rem:addrow}.

\begin{proposition}\label{Prop:0to-1}
For positive integers $r, s$, there exists 
a $\BSHM(4rs,4s,4s,0)$ if and only if there exists 
a $\BSHM(4rs,4s-1,4s-1,-1)$ containing the all-ones row.
\end{proposition}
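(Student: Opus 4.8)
The plan is to recognize the statement as an instance of the equivalence in \cref{Rem:addrow}: passing from $(\ell,a,b)=(4s-1,4s-1,-1)$ to $(4s,4s,0)$ is exactly the shift $(\ell,a,b)\mapsto(\ell+1,a+1,b+1)$ effected by adjoining an all-ones row to the distinguished submatrix. The whole difficulty is therefore \emph{positional}: \cref{Rem:addrow} applies only when the all-ones row sits between the distinguished submatrix and its complement, so in each direction I must first maneuver the all-ones row into that boundary position using only operations (row and column permutation and, where legitimate, row or column negation) that preserve the relevant balanced splittable structure.

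For the direction assuming a $\BSHM(4rs,4s-1,4s-1,-1)$ with respect to a submatrix $H_1$ that contains the all-ones row, I would first invoke \cref{Prop:Structure2}, which forces $H_1=\vo^T\otimes L$ (after reordering columns) with $\begin{pmatrix}\vo^T\\L\end{pmatrix}$ a Hadamard matrix of order~$4s$. Since the rows of $L$ are orthogonal to $\vo^T$, no row of $L$, and hence no row of $H_1$, is all-ones; thus the all-ones row of $H$ guaranteed by hypothesis must lie in the complementary submatrix $H_2$. I would then permute the rows of $H_2$ to bring the all-ones row to its top, writing $H=\begin{pmatrix}H_1\\\vo^T\\H_2'\end{pmatrix}$, and apply \cref{Rem:addrow} to conclude that $H$ is a $\BSHM(4rs,4s,4s,0)$ with respect to $\begin{pmatrix}H_1\\\vo^T\end{pmatrix}$.

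For the converse, starting from a $\BSHM(4rs,4s,4s,0)$ with respect to $H_1$, the analogue of \cref{Prop:Structure} (whose proof applies verbatim with $8rs$ replaced by $4rs$, giving $k_{4s}=r-1$ and column multiplicity~$r$) lets me reorder columns so that $H_1=\vo^T\otimes L$ with $L$ a Hadamard matrix of order~$4s$. Here $L$ need not have an all-ones row, so I would normalize it: negating, for each column position of $L$ carrying a $-1$ in a chosen row, the corresponding $r$ columns of $H$ (one in each tiled block) turns $L$ into a Hadamard matrix $M=\begin{pmatrix}\vo^T\\L'\end{pmatrix}$ with an all-ones row, while keeping the top $4s$ rows of the transformed matrix $H'$ equal to $\vo^T\otimes M$. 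Because $M$ is still Hadamard, $\vo^T\otimes M$ has column dot products in $\{0,4s\}$, so $H'$ is again a $\BSHM(4rs,4s,4s,0)$, now with the all-ones row sitting inside its distinguished submatrix; rearranging rows into $\begin{pmatrix}\vo^T\otimes L'\\\vo^T\\H_2'\end{pmatrix}$ and applying \cref{Rem:addrow} yields the desired $\BSHM(4rs,4s-1,4s-1,-1)$ with respect to $\vo^T\otimes L'$, which manifestly contains the all-ones row.

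The step I expect to be the main obstacle is this column normalization in the converse: by \cref{Rem:basic}~$(iii)$ column negation does not in general preserve the balanced splittable property when $b\neq-a$, so the key point to verify carefully is that negating the \emph{same} relative column in every one of the $r$ blocks keeps the top submatrix of the form $\vo^T\otimes(\text{Hadamard})$, and therefore does preserve it. Finally I would note that the edge case $r=1$ is degenerate on both sides (the purported $\BSHM(4s,4s,4s,0)$ has $\ell=n$, while the $\BSHM(4s,4s-1,4s-1,-1)$ has $\ell=n-1$ and is trivial in the sense of \cref{Rem:basic}~$(ii)$), so it suffices to treat $r\ge2$, which also guarantees that the complementary submatrix $H_2'$ is nonempty as \cref{Rem:addrow} requires.
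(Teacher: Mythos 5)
Your proof is correct and matches the paper's in essentials: your converse direction is exactly the paper's forward argument (\cref{Prop:Structure} applied verbatim at $n=4rs$, consistent negation of all $r$ copies of each column of $L$ to create the all-ones row, followed by \cref{Rem:addrow}), while for the other direction the paper simply cites \cref{Prop:add-subtract}~$(ii)$, whose proof places the all-ones row in $H_2$ via \cref{Prop:SRG2}~(A1) where you instead use \cref{Prop:Structure2}. The only difference is which lemma certifies that the all-ones row cannot lie in the distinguished submatrix, so this is a presentational variation rather than a genuinely different route.
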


\begin{proof}
It is sufficient to prove the implication in the forward direction, because the reverse direction follows directly from \cref{Prop:add-subtract}~$(ii)$. 
Suppose that $H = \begin{pmatrix} H_1 \\ H_2 \end{pmatrix}$ 
is a $\BSHM(4rs,4s,4s,0)$ with respect to~$H_1$.
By \cref{Prop:Structure}, we may reorder the columns of $H$ so that 
$H_1 = \vo^T \otimes L$ for some order $4s$ Hadamard matrix $L$ having columns $\vc_1, \dots, \vc_{4s}$. 
Transform~$L$, by negating its columns as necessary, to an order $4s$ Hadamard matrix $\begin{pmatrix} L' \\ \vo^T \end{pmatrix}$.
Then the matrix $K_1 = \vo^T \otimes \begin{pmatrix} L' \\ \vo^T \end{pmatrix}$
is obtained from $H_1$ by either negating or leaving unchanged all $r$ occurrences of $\vc_i$ (for each $i$ independently), and so the dot product of two distinct columns of $K_1$ is identical to the dot product of the same two columns of $H_1$ (namely $4s$ or $0$).
Therefore, writing 
$K_1 = \begin{pmatrix} H'_1 \\ \vo^T \end{pmatrix}$, 
 the matrix
$\begin{pmatrix} K_1 \\ H_2 \end{pmatrix}
=\begin{pmatrix} H'_1 \\ \vo^T \\ H_2 \end{pmatrix}$ 
is a $\BSHM(4rs,4s,4s,0)$ with respect to $K_1$.
This matrix contains the all-ones row, and by \cref{Rem:addrow} is a $\BSHM(4rs,4s-1,4s-1,-1)$ with respect to~$H'_1$.
\end{proof}

\cref{Prop:0to-1} allows us to transform each imprimitive balanced splittable Hadamard matrix with $b=0$ constructed in \cref{Thm:b=0} to an imprimitive balanced splittable Hadamard matrix with $b=-1$.

\begin{corollary} \label{Cor:b=-1}
\mbox{}
\begin{enumerate}[$(i)$]
\item 
There exists a $\BSHM(4rs,4s-1,4s-1,-1)$ in each of the following cases:
\begin{enumerate}[$(a)$]
\item 
there exist Hadamard matrices of order $r$ and~$4s$

\item
there exist Hadamard matrices of order~$2r$ and~$2s$.
\end{enumerate}

\item
Let $r$ be odd. Then there is no $\BSHM(4rs,4s-1,4s-1,-1)$ containing the all-ones row.

\end{enumerate}
\end{corollary}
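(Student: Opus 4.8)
Both parts rest on the equivalence of \cref{Prop:0to-1}, which lets me replace a $\BSHM(4rs,4s-1,4s-1,-1)$ containing the all-ones row by a $\BSHM(4rs,4s,4s,0)$. Part $(i)$ then reduces to quoting the constructions of \cref{Thm:b=0}, while part $(ii)$ becomes a nonexistence statement that I would settle by a parity argument.

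For $(i)$, by \cref{Prop:0to-1} it suffices in each case to construct a $\BSHM(4rs,4s,4s,0)$. I would first note that we may take $r\ge 2$ (the case $r=1$ gives $\ell=n-1$ and is trivial), whereupon the hypothesised Hadamard matrix of order $r$ in $(a)$, respectively of order $2r$ in $(b)$, forces $r$ to be even, since every Hadamard matrix of order greater than $2$ has order divisible by $4$. Writing $r=2\rho$, I would then apply \cref{Thm:b=0} with its first parameter equal to $\rho$: case $(a)$ uses \cref{Thm:b=0}~$(i)$, whose inputs are Hadamard matrices of orders $2\rho=r$ and $4s$, and case $(b)$ uses \cref{Thm:b=0}~$(ii)$, whose inputs are Hadamard matrices of orders $4\rho=2r$ and $2s$. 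In either case the resulting $\BSHM(8\rho s,4s,4s,0)$ is precisely a $\BSHM(4rs,4s,4s,0)$, and \cref{Prop:0to-1} then supplies the desired $\BSHM(4rs,4s-1,4s-1,-1)$.

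For $(ii)$ I would argue by contradiction, again taking $r\ge 2$. Suppose a $\BSHM(4rs,4s-1,4s-1,-1)$ containing the all-ones row exists; by \cref{Prop:0to-1} there is then a $\BSHM(4rs,4s,4s,0)$, say $K=\begin{pmatrix}K_1\\ K_2\end{pmatrix}$ with respect to the $4s\times 4rs$ submatrix $K_1$. The crucial observation is that here $a=4s$ equals $\ell$, so two $\{1,-1\}$-columns of length $4s$ have dot product $a$ exactly when they are equal; since $K$ has Type~2, \cref{Prop:SRG2}~(B4) gives $k_{4s}=r-1$, and hence every column of $K_1$ occurs with multiplicity exactly $r$. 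I would then fix one class of $r$ mutually equal columns of $K_1$ and let $\vv_1,\dots,\vv_r$ be the corresponding columns of $K_2$, each a $\{1,-1\}$-vector of length $4s(r-1)$. Column-orthogonality of $K$ yields $4s+\vv_p\cdot\vv_q=0$, so $\vv_p\cdot\vv_q=-4s$ for $p\ne q$, while $\|\vv_p\|^2=4s(r-1)$; therefore $\|\sum_{p=1}^r\vv_p\|^2=r\cdot 4s(r-1)+r(r-1)(-4s)=0$, giving $\sum_{p=1}^r\vv_p=\vz$. Reading off any single coordinate (one exists because $K_2$ has $4s(r-1)>0$ rows) expresses $0$ as a sum of $r$ terms each equal to $\pm1$, which is impossible when $r$ is odd.

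The routine part is $(i)$, which is essentially bookkeeping once the index substitution $r\mapsto r/2$ is made. The heart of the matter, and the step I expect to be the main obstacle, is the parity obstruction in $(ii)$: the key idea is to pass to the $b=0$ form in order to exploit the coincidence $a=\ell$. This coincidence forces the columns of $K_1$ to repeat $r$ times, and orthogonality of the full Hadamard matrix then turns into the statement that the $r$ complementary columns in each repetition class are $\{1,-1\}$-vectors summing to the zero vector --- a conclusion that odd $r$ cannot meet.
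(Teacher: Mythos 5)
Your part $(i)$ is essentially the paper's own proof, which consists of the single line ``apply \cref{Prop:0to-1} to \cref{Thm:b=0}''; you have simply made explicit the bookkeeping the paper leaves silent, namely that the hypothesised Hadamard matrix of order $r$ (resp.\ $2r$) forces $r$ to be even once $r\ge 2$, so that the substitution $r=2\rho$ aligns the parameters of \cref{Thm:b=0} with those of the corollary. Part $(ii)$ is correct but takes a genuinely different route. The paper deduces the nonexistence of a $\BSHM(4rs,4s,4s,0)$ for odd $r$ from the classification in \cref{Prop:imprimitivebne-a}, whose Type~2 case ultimately rests on the integrality condition $\frac{n(b-1)}{2(b-a)}\in\Z$ of \cref{Prop:SRG2}~(B3): with $(a,b)=(4s,0)$ this forces $\frac{n}{8s}$ to be an integer, i.e.\ $r$ even. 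You instead prove that nonexistence directly: the coincidence $a=\ell$ forces the columns of $K_1$ into classes of $r$ identical columns (exactly the structure isolated in \cref{Prop:Structure}), and orthogonality of the full Hadamard matrix gives, for the $r$ corresponding columns $\vv_1,\dots,\vv_r$ of $K_2$, that $\big\|\sum_{p}\vv_p\big\|^2 = r\cdot 4s(r-1) - r(r-1)\cdot 4s = 0$, so these $\{1,-1\}$-vectors sum to $\vz$ --- impossible coordinatewise when $r$ is odd. This is in fact the same technique the paper deploys later in \cref{Prop:ratiobound} (where the analogous sum has norm $r^2$ rather than $0$), so your argument is very much in the spirit of the paper while being self-contained: it avoids \cref{Prop:imprimitivebne-a} entirely and makes the parity obstruction transparent, at the cost of being longer than the paper's one-line deduction from its established machinery. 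One caveat shared by both proofs: the statement implicitly assumes $r\ge 2$ (as does \cref{Prop:0to-1}, whose reverse direction needs $\ell<n-2$), and you at least flag this explicitly.
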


\begin{proof}
\mbox{}
\begin{enumerate}[$(i)$]
\item Apply \cref{Prop:0to-1} to \cref{Thm:b=0}.

\item By \cref{Prop:imprimitivebne-a}, a $\BSHM(4rs,4s,4s,0)$ does not exist for $r$ odd. Apply \cref{Prop:0to-1}.
\qedhere

\end{enumerate}
\end{proof}

\begin{remark} \label{Rem:skew-type}
\cref{Cor:b=-1}~$(ii)$ does not hold if the condition on the presence of the all-ones row is removed: by \cref{Res:Constructions}~$(vi)$, 
a $\BSHM(4s(4s-1),4s-1,4s-1,-1)$ exists provided $4s$ is the order of a skew-type Hadamard matrix. 
A conjecture attributed to Seberry \cite[p.~274]{Handbook} states that a skew-type Hadamard matrix exists for each order $4s$, and the conjecture is known to hold for all $s < 47$ \cite[p.~275]{Handbook}.
\end{remark}

We now derive some further parameter restrictions when $r$ is odd.

\begin{proposition}\label{Prop:ratiobound}
Suppose $H = \begin{pmatrix} H_1 \\ H_2 \end{pmatrix}$ is a nontrivial 
$\BSHM(4rs,4s-1,4s-1,-1)$ with respect to $H_1$, where $r$ is odd. 
Then $r^2$ is the sum of $4rs-4s+1$ odd squares, and $r \ge 4s-1$.
\end{proposition}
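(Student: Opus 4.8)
The plan is to exploit the explicit block structure furnished by \cref{Prop:Structure2} together with the eigenspace information in \cref{Prop:SRG2}~(A1). First I would reorder the columns of $H$ using \cref{Prop:Structure2} so that $H_1 = \vo^T \otimes L = \begin{pmatrix} L & L & \cdots & L \end{pmatrix}$ with $r$ copies of $L$, where $\begin{pmatrix} \vo^T \\ L \end{pmatrix}$ is a Hadamard matrix of order $4s$. The parameters $(a,b) = (4s-1,-1)$ force $H$ to have Type~1 (as is already used implicitly in \cref{Prop:Structure2}), so \cref{Prop:SRG2}~(A1) applies; moreover this reordering changes no column sum, so $H_1 \vo = \vz$ and $(H_2\vo)^T(H_2\vo) = n^2$ are preserved.

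The heart of the argument is to show that every row sum of $H_2$ equals $4s$ times an odd integer. I would fix a row $\vu^T$ of $H_2$ and split it into $r$ consecutive blocks $\vu = (\vu_1, \dots, \vu_r)$ of length $4s$, one for each copy of $L$. Writing $\mathbf{l}_j^T$ for the $j$-th row of $L$ (for $1 \le j \le 4s-1$), row $j$ of $H_1$ is the $r$-fold concatenation of $\mathbf{l}_j^T$, so the relation $H_1 H_2^T = 0$ gives $\mathbf{l}_j \cdot \mathbf{w} = 0$ for every $j$, where $\mathbf{w} = \sum_{k=1}^r \vu_k$. The rows $\mathbf{l}_1, \dots, \mathbf{l}_{4s-1}$ are linearly independent and all orthogonal to $\vo$ (they are the rows of an order $4s$ Hadamard matrix other than $\vo^T$), hence span the orthogonal complement of $\vo$ in $\R^{4s}$; therefore $\mathbf{w} = c\,\vo$ for some scalar $c$. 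Each entry of $\mathbf{w}$ is a sum of $r$ terms from $\{1,-1\}$, so is $\equiv r \pmod{2}$, and since $r$ is odd, $c$ is an odd integer. The row sum of $\vu^T$ is then $\vu \cdot \vo = \mathbf{w} \cdot \vo = 4sc$.

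Applying this to each of the $n-\ell = 4rs - 4s + 1$ rows of $H_2$ produces odd integers $c_1, \dots, c_{4rs-4s+1}$ with the $i$-th entry of $H_2 \vo$ equal to $4s\,c_i$. Combining with $(H_2\vo)^T(H_2\vo) = n^2 = (4rs)^2$ from \cref{Prop:SRG2}~(A1) and dividing by $(4s)^2$ yields $r^2 = \sum_{i=1}^{4rs-4s+1} c_i^2$, which exhibits $r^2$ as a sum of $4rs-4s+1$ odd squares. Finally, each $c_i^2 \ge 1$, so $r^2 \ge 4rs - 4s + 1$, i.e.\ $(r-1)(r-4s+1) \ge 0$; since $H$ is nontrivial we have $r > 1$, and therefore $r \ge 4s-1$.

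The main obstacle is the middle step: proving the divisibility by $4s$ and the oddness of each row sum of $H_2$. Everything hinges on recognizing that summing the per-block pieces $\vu_k$ collapses the orthogonality relations with $H_1$ onto the single vector $\mathbf{w}$, which the Hadamard structure of $L$ then forces to be a multiple of $\vo$; the parity of $r$ supplies the oddness. Once this is established, the sum-of-squares identity and the bound $r \ge 4s-1$ are routine.
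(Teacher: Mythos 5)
Your proof is correct, but it takes a genuinely different route from the paper's. The paper works on the \emph{column} side: after invoking \cref{Prop:Structure2}, it selects the $r$ columns of $H$ whose $H_1$-parts coincide (one from each copy of $L$), uses column orthogonality of $H$ to get that their $H_2$-parts $\vc_1,\dots,\vc_r$ satisfy $\vc_i \cdot \vc_i = 4rs-4s+1$ and $\vc_i \cdot \vc_j = -(4s-1)$ for $i \ne j$, and computes directly that $\vv = \sum_{i=1}^r \vc_i$ satisfies $\vv \cdot \vv = r^2$; the parity of $r$ makes each of the $4rs-4s+1$ entries of $\vv$ odd, and the conclusion follows. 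You instead work on the \emph{row} side: summing the $r$ blocks of each row of $H_2$, using $H_1H_2^T = 0_{\ell \times (n-\ell)}$ from \eqref{Eq:MMT} together with the fact that the rows of $L$ span the orthogonal complement of $\vo$ in $\R^{4s}$, you show each row sum of $H_2$ equals $4s$ times an odd integer, and then you import the identity $(H_2\vo)^T(H_2\vo)=n^2$ from \cref{Prop:SRG2}~(A1) to obtain $r^2 = \sum_i c_i^2$. Your route actually proves a stronger intermediate fact (for each row of $H_2$, the sums over all $4s$ column classes equal the same odd constant), but it pays by needing the Type~1 classification in order to invoke (A1); your justification there is sound, since Type~2 is impossible ((B2) would force $4rs(2-4s) = (\ell-a)(\ell-b) = 0$), and the paper's proof of \cref{Prop:Structure2} already presupposes Type~1 by citing (A4). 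By contrast, the paper's argument is more self-contained, needing only \cref{Prop:Structure2} and column orthogonality rather than \cref{Prop:SRG2}~(A1). One terminological slip: the column reordering preserves \emph{row} sums (the entries of $H_1\vo$ and $H_2\vo$), not ``column sums''---the claim you need is correct as used. Both proofs end identically: $c_i^2 \ge 1$ gives $(r-1)(r+1-4s) \ge 0$, and nontriviality gives $r>1$, hence $r \ge 4s-1$.
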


\begin{proof}
By \cref{Prop:Structure2}, the columns of $H$ can be reordered so that
\begin{align*}
    H_1 =
    \underbrace{
    \begin{pmatrix}
    L & L & \dots & L
    \end{pmatrix}}_r
\end{align*}
for some $(4s-1) \times 4s$ matrix~$L$.
For $1 \le i \le r$, let $\begin{pmatrix}\vu_i \\ \vc_i \end{pmatrix}$ be the $(4is+1)^\text{th}$ column of~$H$, where $\vu_i$ is contained in $H_1$ and $\vc_i$ is contained in~$H_2$.
The repeating structure of $H_1$ gives 
\[
\vu_i \cdot \vu_j = 4s-1 \quad \mbox{for all $i,j$}.
\]
Since the columns of the Hadamard matrix $H$ are orthogonal, this gives the following relationship between the columns of $H_2$:
\[
\vc_i \cdot \vc_j = \begin{cases}
  4rs-4s+1 & \mbox{for $i=j$}, \\
  -(4s-1)  & \mbox{for $i \ne j$}.
\end{cases}
\]

Now let $\vv = \sum_{i=1}^r \vc_i$. Then
\begin{align*}
\vv \cdot \vv
 &= \sum_{i = 1}^r \vc_i \cdot \vc_i + 
\sum_{i \ne j} \vc_i \cdot \vc_j \\
 &= r (4rs-4s+1) - r(r-1)(4s-1) \\
 &= r^2.
\end{align*}
Each of the $4rs-4s+1$ entries of $\vv$ is the sum of $r$ terms $\pm 1$, and so is odd because $r$ is odd by assumption. 
Therefore $\vv \cdot \vv = r^2$ is the sum of $4rs-4s+1$ odd squares. 
This implies that $r^2 \ge 4rs-4s+1$,
which rearranges as
\[
(r-1)(r+1-4s) \ge 0.
\]
Since $H$ is nontrivial, we have $r \ge 1$ and therefore $r \ge 4s-1$.
\end{proof}

\begin{remark}
The bound on $r$ in \cref{Prop:ratiobound} is tight, by the construction of \cref{Res:Constructions}~$(vi)$.
\end{remark}

\cref{Tab:ellminus} shows the parameter sets for which the existence of a $\BSHM(4rs,4s-1,4s-1,-1)$ is not determined by \cref{Res:Constructions}~$(vi)$ and \cref{Cor:b=-1} and \cref{Prop:ratiobound}, for $2 \le r \le 12$ and $s \le 8$.

\begin{table}[ht!]
\caption{Open cases for a $\BSHM(4rs,4s-1,4s-1,-1)$ with $2 \le r \le 12$ and $s \le 8$.}
\begin{center}
\begin{tabular}{|c|c|c|} 		
\hline
$r$	& $s$	& $(4rs,4s-1,4s-1,-1)$ \\ \hline
5       & 1	& $( 20, 3, 3,-1)$ \\
7	& 1	& $( 28, 3, 3,-1)$ \\
9   	& 1	& $( 36, 3, 3,-1)$ \\
11	& 1	& $( 44, 3, 3,-1)$ \\
9	& 2	& $( 72, 7, 7,-1)$ \\
6	& 3 	& $( 72,11,11,-1)$ \\
11	& 2	& $( 88, 7, 7,-1)$ \\
10	& 3	& $(120,11,11,-1)$ \\
6	& 5 	& $(120,19,19,-1)$ \\
6	& 7 	& $(168,27,27,-1)$ \\
10	& 5	& $(200,19,19,-1)$ \\
10	& 7	& $(280,27,27,-1)$ \\ \hline
\end{tabular}
\end{center}
\label{Tab:ellminus}
\end{table}

\cref{Res:Constructions}~$(vi)$ and \cref{Cor:b=-1} and \cref{Prop:ratiobound} combine to give the following result. (The existence of a skew-type Hadamard matrix is discussed in \cref{Rem:skew-type}.)

\begin{corollary} 
Assume the Hadamard matrix conjecture holds. Then the existence of a $\BSHM(4rs,4s-1,4s-1,-1)$ for integers $r \ge 2$ and $s \ge 1$ is determined by:

\begin{center}
\footnotesize
\begin{tabular}{|l|l|} 		
\hline
$r=2$ or $r \equiv 0 \pmod{4}$		& exists					\\ \hline
$r \equiv 2 \pmod{4}$ and $r > 2$	& exists if $s=1$ or $s$ even, 			\\
					& otherwise open				\\ \hline
$r$ odd 				& does not exist if $r < 4s-1$,				\\
					& exists if $r=4s-1$ and $4s$ is order of skew-type Hadamard matrix, \\ 
					& otherwise open 				\\ \hline
\end{tabular}
\normalsize
\end{center}
\end{corollary}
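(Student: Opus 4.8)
The plan is to carry out a case analysis on $r$, driven by the observation that under the Hadamard matrix conjecture a Hadamard matrix of order $n$ exists precisely when $n \in \{1,2\}$ or $n \equiv 0 \pmod 4$; here the necessary direction is the elementary condition recalled in \cref{Sec:intro}, and the sufficient direction is the assumed conjecture, so that the biconditional is available. Every existence verdict in the table will be read off from one of the prior results \cref{Cor:b=-1}~$(i)$ and \cref{Res:Constructions}~$(vi)$, and every nonexistence verdict from \cref{Prop:ratiobound}. Thus the real work is to determine, for each congruence class of $r$ and each parity of $s$, which of these results applies.

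First I would dispose of the even values of $r$ using \cref{Cor:b=-1}~$(i)$. When $r = 2$ or $r \equiv 0 \pmod 4$, construction $(a)$ applies because Hadamard matrices of order $r$ and of order $4s$ both exist under the conjecture, giving existence for all $s$. When $r \equiv 2 \pmod 4$ with $r > 2$, order $r$ is forbidden, so $(a)$ is unavailable; construction $(b)$ instead needs Hadamard matrices of order $2r$ (which exists, since $2r \equiv 0 \pmod 4$) and of order $2s$. The latter exists exactly when $s = 1$ (order $2$) or $s$ is even (order $\equiv 0 \pmod 4$), yielding the stated existence in those subcases. For $s$ odd with $s > 1$ the order $2s$ satisfies $2s \equiv 2 \pmod 4$ and $2s > 2$ and is therefore forbidden, so neither construction applies and the cell is left open.

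Then I would treat odd $r$. The nonexistence entry follows at once from \cref{Prop:ratiobound}, which forces $r \ge 4s-1$ for any nontrivial such matrix, so nothing exists when $r < 4s-1$. The boundary value $r = 4s-1$ is exactly the parameter set produced by \cref{Res:Constructions}~$(vi)$ with $q = 4s-1$ (so that $q(q+1) = 4s(4s-1) = 4rs$), giving existence whenever $4s$ is the order of a skew-type Hadamard matrix. For odd $r > 4s-1$ I would argue the cell is open: the constructions of \cref{Cor:b=-1}~$(i)$ demand Hadamard matrices of order $r$ or $2r$, both of which are $\equiv 2 \pmod 4$ and exceed $2$ (hence forbidden) when $r$ is odd and $r > 1$, while \cref{Res:Constructions}~$(vi)$ reaches only $r = 4s-1$ and \cref{Prop:ratiobound} excludes only $r < 4s-1$.

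The main obstacle is not any single computation but the bookkeeping behind the ``open'' cells: each requires confirming that none of the three available results applies, which reduces to verifying that the Hadamard orders demanded by the constructions all fall into the forbidden residue class $2 \pmod 4$ above $2$. I would present this uniformly by recording, for each cell, the order(s) of Hadamard matrix that each construction would require, so that the existence, open, or nonexistence verdict is transparent and the reliance on the biconditional characterization of admissible Hadamard orders is explicit.
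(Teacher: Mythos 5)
Your proposal is correct and takes essentially the same route as the paper, which derives this corollary by combining exactly the three results you invoke---\cref{Res:Constructions}~$(vi)$, \cref{Cor:b=-1}, and \cref{Prop:ratiobound}---with the characterization of admissible Hadamard orders under the conjecture, via the same case analysis on $r$ and $s$. One trivial slip in your odd-$r$ case: the order $r$ itself is odd (not $\equiv 2 \pmod 4$ as you state), but it is still a forbidden Hadamard order for $r \ge 3$, so your conclusion stands.
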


\end{subsection}

\end{section}

\begin{section}{Open questions}\label{Sec:future}

We propose some open questions for future research. 

\begin{enumerate}[$(i)$]
\item 
\cref{Prop:SRG1,Prop:SRG2} show that a balanced splittable Hadamard matrix is associated with a strongly regular graph, and we have restricted the possible forms of these graphs as summarized in \cref{Tab:summary}.
We are grateful to a reviewer for proposing the converse question:
under what conditions can a given strongly regular graph be associated with a balanced splittable Hadamard matrix? 

\item
Does there exist a $\BSHM(n,\ell,a,-a)$ where $(n,\ell,a)$ do not take the form $(4u^2,2u^2-u,u)$ for some integer~$u$
(as in \cref{Res:Equi})? From \cref{Tab:surviveb=-a}, the smallest open case occurs at $(n,\ell,a) = (288,42,6)$.
As noted after \cref{Def:ETF}, a $\BSHM(n,\ell,a,-a)$ is equivalent to an $\ell \times n$ real flat ETF that is a submatrix of an order $n$ Hadamard matrix. 
It is therefore interesting to note the existence of a real flat ETF with size not of the form $(2u^2-u) \times 4u^2$, as discussed in \cite[p. 297]{FJMP}.

\item
A $\BSHM(4u^2,2u^2-u,u,-u)$ exists when $u$ is the order of a Hadamard matrix (\cref{Res:Equi}), but does not exist when $u$ is odd (\cref{Res:b=-a}~$(i)$).
Does there exist a $\BSHM(4u^2,2u^2-u,u,-u)$ for $u \equiv 2 \pmod{4}$ and $u > 2$? From \cref{Tab:surviveb=-a}, the smallest open case occurs at $u=6$, for a $\BSHM(144,66,6,-6)$ (see also \cite[p.~2050]{KPS}). We note that a real flat $66 \times 144$ ETF is known to exist \cite[Corollary 1]{FJMP}.
 
\item
By Results~\ref{Res:ETF} and \ref{Res:NonEqui}~$(ii)$, the parameter $n$ for a $\BSHM(n,\ell,a,b)$ is bounded from above by a quadratic function in~$\ell$. These results were derived for matrices with real entries, not necessarily restricted to lie in~$\{1,-1\}$. Can the growth rate of the upper bound be improved by including the condition that the entries of a balanced splittable Hadamard matrix must lie in $\{1,-1\}$?

\item
\cref{Cor:PDSpacking} uses $(\delta,t)$ partial difference set packings with respect to $(a_1,\dots,a_t)$ in an elementary abelian $2$-group, in order to construct Hadamard matrices having the balanced splittable property with respect to multiple disjoint submatrices simultaneously. Are there further examples of such partial difference set packings?
Constraints on the possible parameters $\delta, t, a_1, \dots, a_t$ are given in \cite[Theorem 1]{vDM}, expressed in the language of amorphic association schemes. 

\item
\cref{Res:Constructions}~$(vi)$ describes a direct construction (not involving a Kronecker product) of an infinite family of imprimitive balanced splittable Hadamard matrices. Are there further such constructions?

\end{enumerate}

\end{section}

\section*{Acknowledgements}

The authors are grateful to Hadi Kharaghani for kindly supplying the slides of his presentation ``Balancedly splittable orthogonal designs'' given at the \emph{8th European Congress of Mathematics} in 2021.
They thank John Polhill for helpful discussion and for pointing out the paper~\cite{vDM}.
They are grateful to the reviewers for their very careful reading of the manuscript.


\begin{thebibliography}{10}

\bibitem{BBS}
E.~Bannai, E.~Bannai, and D.~Stanton.
\newblock An upper bound for the cardinality of an {$s$}-distance subset in
  real {E}uclidean space. {II}.
\newblock {\em Combinatorica}, 3(2):147--152, 1983.

\bibitem{BGOY}
A.~Barg, A.~Glazyrin, K.~A. Okoudjou, and W.-H. Yu.
\newblock Finite two-distance tight frames.
\newblock {\em Linear Algebra Appl.}, 475:163--175, 2015.

\bibitem{BY}
A.~Barg and W.-H. Yu.
\newblock New bounds for spherical two-distance sets.
\newblock {\em Exp. Math.}, 22(2):187--194, 2013.

\bibitem{B}
A.~Blokhuis.
\newblock A new upper bound for the cardinality of {$2$}-distance sets in
  {E}uclidean space.
\newblock In {\em Convexity and graph theory ({J}erusalem, 1981)}, volume~87 of
  {\em North-Holland Math. Stud.}, pages 65--66. North-Holland, Amsterdam,
  1984.

\bibitem{Brouwer}
A.~E. Brouwer.
\newblock Parameters of strongly regular graphs.
\newblock Available at \url{http://www.win.tue.nl/~aeb/graphs/srg/srgtab.html}.

\bibitem{BH}
A.~E. Brouwer and W.~H. Haemers.
\newblock {\em Spectra of graphs}.
\newblock Universitext. Springer, New York, 2012.

\bibitem{CFHT}
P.~G. Casazza, A.~Farzannia, J.~I. Haas, and T.~T. Tran.
\newblock Toward the classification of biangular harmonic frames.
\newblock {\em Appl. Comput. Harmon. Anal.}, 46:544--568, 2019.

\bibitem{CTT}
P.~G. Casazza, T.~T. Tran, and J.~C. Tremain.
\newblock Regular two-distance sets.
\newblock {\em J. Fourier Anal. Appl.}, 26(3):Paper No. 49, 32, 2020.

\bibitem{Handbook}
R.~Craigen and H.~Kharaghani.
\newblock Hadamard matrices and {H}adamard designs.
\newblock In C.J.\ Colbourn and J.H.\ Dinitz, editors, {\em Handbook of
  Combinatorial Designs}, pages 273--280. Chapman \& Hall/CRC, Boca Raton, 2nd
  edition, 2007.

\bibitem{vDM}
E.~R.~van Dam and M.~Muzychuk.
\newblock Some implications on amorphic association schemes.
\newblock {\em J. Combin. Theory Ser. A}, 117(2):111--127, 2010.

\bibitem{DGS77}
P.~Delsarte, J.~M. Goethals, and J.~J. Seidel.
\newblock Spherical codes and designs.
\newblock {\em Geom. Dedicata}, 6(1):363--388, 1977.

\bibitem{DGS91}
P.~Delsarte, J.~M. Goethals, and J.~J. Seidel.
\newblock Bounds for systems of lines, and {J}acobi polynomials.
\newblock In {\em Geometry and Combinatorics}, pages 193--207. Academic Press,
  Inc., Boston, MA, 1991.

\bibitem{DF}
C.~Ding and T.~Feng.
\newblock A generic construction of complex codebooks meeting the {W}elch
  bound.
\newblock {\em IEEE Trans. Inform. Theory}, 53(11):4245--4250, 2007.

\bibitem{FJMP}
M.~Fickus, J.~Jasper, D.~G. Mixon, and J.~D. Peterson.
\newblock Hadamard equiangular tight frames.
\newblock {\em Appl. Comput. Harmon. Anal.}, 50:281--302, 2021.

\bibitem{FMT}
M.~Fickus, D.~G. Mixon, and J.~C. Tremain.
\newblock Steiner equiangular tight frames.
\newblock {\em Linear Algebra Appl.}, 436(5):1014--1027, 2012.

\bibitem{GY}
A.~Glazyrin and W.-H. Yu.
\newblock Upper bounds for {$s$}-distance sets and equiangular lines.
\newblock {\em Adv. Math.}, 330:810--833, 2018.

\bibitem{GR}
C.~Godsil and A.~Roy.
\newblock Equiangular lines, mutually unbiased bases, and spin models.
\newblock {\em Eur. J. Comb.}, 30:246--262, 2009.

\bibitem{Horadam}
K.~J. Horadam.
\newblock {\em Hadamard matrices and their applications}.
\newblock Princeton University Press, Princeton, NJ, 2007.

\bibitem{JL22+}
J.~Jedwab and S.~Li.
\newblock Group rings and character sums: tricks of the trade.
\newblock \url{arXiv:2211.11986}, 2022.

\bibitem{JL21}
J.~Jedwab and S.~Li.
\newblock Packings of partial difference sets.
\newblock {\em Combinatorial Theory}, 1(\#18), 2021.

\bibitem{KPS}
H.~Kharaghani, T.~Pender, and S.~Suda.
\newblock Balancedly splittable orthogonal designs and equiangular tight
  frames.
\newblock {\em Des. Codes Cryptogr.}, 89(9):2033--2050, 2021.

\bibitem{KS}
H.~Kharaghani and S.~Suda.
\newblock Balancedly splittable {H}adamard matrices.
\newblock {\em Discrete Math.}, 342(2):546--561, 2019.

\bibitem{KTR}
H.~Kharaghani and B.~Tayfeh-Rezaie.
\newblock A hadamard matrix of order 428.
\newblock {\em J. Comb. Des.}, 13:435--440, 2005.

\bibitem{LRS}
D.~G. Larman, C.~A. Rogers, and J.~J. Seidel.
\newblock On two-distance sets in {E}uclidean space.
\newblock {\em Bull. London Math. Soc.}, 9(3):261--267, 1977.

\bibitem{L}
P.~Lison\v{e}k.
\newblock New maximal two-distance sets.
\newblock {\em J. Combin. Theory Ser. A}, 77(2):318--338, 1997.

\bibitem{Ma}
S.~L. Ma.
\newblock A survey of partial difference sets.
\newblock {\em Des. Codes Cryptogr.}, 4(3):221--261, 1994.

\bibitem{Mo}
K.~Momihara.
\newblock Certain strongly regular {C}ayley graphs on
  {$\mathbb{F}_{2^{2(2s+1)}}$} from cyclotomy.
\newblock {\em Finite Fields Appl.}, 25:280--292, 2014.

\bibitem{MX}
K.~Momihara and Q.~Xiang.
\newblock Strongly regular {C}ayley graphs from partitions of subdifference
  sets of the {S}inger difference sets.
\newblock {\em Finite Fields Appl.}, 50:222--250, 2018.

\bibitem{M}
O.~R. Musin.
\newblock Spherical two-distance sets.
\newblock {\em J. Combin. Theory Ser. A}, 116(4):988--995, 2009.

\bibitem{N}
A.~Neumaier.
\newblock Distance matrices, dimension, and conference graphs.
\newblock {\em Nederl. Akad. Wetensch. Indag. Math.}, 43(4):385--391, 1981.

\bibitem{Paley}
R.E.A.C. Paley.
\newblock On orthogonal matrices.
\newblock {\em J. Math. Phys.}, 12:311--320, 1933.

\bibitem{Ppc}
J.~Polhill.
\newblock Personal communication, June 2022.

\bibitem{P}
J.~Polhill.
\newblock Negative {L}atin square type partial difference sets and amorphic
  association schemes with {G}alois rings.
\newblock {\em J. Combin. Des.}, 17(3):266--282, 2009.

\bibitem{PDS}
J.~Polhill, J.~A. Davis, and K.~Smith.
\newblock A new product construction for partial difference sets.
\newblock {\em Des. Codes Cryptogr.}, 68(1-3):155--161, 2013.

\bibitem{Pott}
A.~Pott.
\newblock {\em Finite Geometry and Character Theory}, volume 1601 of {\em
  Lecture Notes in Mathematics}.
\newblock Springer-Verlag, Berlin, 1995.

\bibitem{SH}
T.~Strohmer and R.~W.~Heath Jr.
\newblock Grassmannian frames with applications to coding and communication.
\newblock {\em Appl. Comput. Harmon. Anal.}, 14(3):257--275, 2003.

\bibitem{STD+}
M.~A. Sustik, J.~A. Tropp, I.~S. Dhillon, and R.~W.~Heath Jr.
\newblock On the existence of equiangular tight frames.
\newblock {\em Linear Algebra Appl.}, 426(2-3):619--635, 2007.

\bibitem{W}
S.~Waldron.
\newblock On the construction of equiangular frames from graphs.
\newblock {\em Linear Algebra Appl.}, 431(11):2228--2242, 2009.

\bibitem{Will}
J.~Williamson.
\newblock Hadamard's determinant theorem and the sum of four squares.
\newblock {\em Duke Math. J.}, 11(1):65--81, 1944.

\bibitem{XZG}
P.~Xia, S.~Zhou, and G.~B. Giannakis.
\newblock Achieving the {W}elch bound with difference sets.
\newblock {\em IEEE Trans. Inform. Theory}, 51(5):1900--1907, 2005.

\end{thebibliography}
\end{document}